\newcommand{\R}{\mathbb{R}}
\newcommand{\N}{\mathbb{N}}
\newcommand{\conv}{\mathop\mathrm{conv}\nolimits}
\newcommand{\aff}{\mathop\mathrm{aff}\nolimits}
\newcommand{\rec}{\mathop\mathrm{rec}\nolimits}
\renewcommand{\subset}{\subseteq}
\renewcommand{\supset}{\supseteq}
\newcommand{\unit}[1]{\mathbf{e}_{#1}}
\newcommand{\zero}{\mathbf{0}}
\newcommand{\C}{\mathcal{C}}
\newcommand{\B}{\mathcal{B}}
\newcommand{\V}{\mathcal{V}}
\crefname{theorem}{Theorem}{Theorems}
\crefname{lemma}{Lemma}{Lemmas}
\crefname{corollary}{Corollary}{Corollaries}
\crefname{proposition}{Proposition}{Propositions}
\crefname{claim}{Claim}{Claims}
\crefname{observation}{Observation}{Observations}
\crefname{conjecture}{Conjecture}{Conjectures}
\crefname{question}{Question}{Questions}
\crefname{remark}{Remark}{Remarks}
\crefname{property}{Property}{Properties}
\crefname{equation}{}{}
\crefname{section}{Section}{Sections}
\crefname{figure}{Figure}{Figures}
\crefname{example}{Example}{Examples}
\newlist{myenumerate}{enumerate}{1}
\setlist[myenumerate]{
  label={\itshape(\roman*)},  
  ref={\itshape(\roman*)},    
}
\crefname{myenumeratei}{}{}
\crefname{enumi}{}{}
\title{Circuits in Extended Formulations\thanks{The first author was supported by Air Force Office of Scientific Research grant FA9550-21-1-0233 and NSF grant 2006183, Algorithmic Foundations, Division of Computing and Communication Foundations. The second author was partially supported by the Alexander von Humboldt Foundation with funds from the German Federal Ministry of Education and Research (BMBF).}}
\author{Steffen Borgwardt\inst{1}\orcidID{0000-0002-8069-5046} \and
Matthias Brugger\inst{2}\orcidID{0000-0003-1571-5239}}
\institute{University of Colorado Denver, \email{steffen.borgwardt@ucdenver.edu} \and
Technical University of Munich,
\email{matthias.brugger@tum.de}}
\begin{document}

\maketitle

\begin{abstract}
Circuits and extended formulations are classical concepts in linear programming theory. The circuits of a polyhedron are the elementary difference vectors between feasible points and include all edge directions. We study the connection between the circuits of a polyhedron $P$ and those of an extended formulation of $P$, i.e., a description of a polyhedron $Q$ that linearly projects onto $P$.
It is well known that the edge directions of $P$ are images of edge directions of $Q$. We show that this `inheritance' under taking projections does not extend to the set of circuits. We provide counterexamples with a provably minimal number of facets, vertices, and extreme rays, including relevant polytopes from clustering, and show that the difference in the number of circuits that are inherited and those that are not can be exponentially large in the dimension.
We further prove that counterexamples exist for any fixed linear projection map, unless the map is injective. Finally, we characterize those polyhedra $P$ whose circuits are inherited from all polyhedra $Q$ that linearly project onto $P$. Conversely, we prove that every polyhedron $Q$ satisfying mild assumptions can be projected in such a way that the image polyhedron $P$ has a circuit with no preimage among the circuits of $Q$. Our proofs build on standard constructions such as homogenization and disjunctive programming. 

\keywords{circuits  \and extended formulations \and polyhedral theory \and linear programming}\\
{\bf MSC: } 52B05 \and 52B12 \and 90C05 
\end{abstract}

\section{Introduction}

Extended formulations have been widely studied in polyhedral combinatorics and the theory of linear programming, with many recent advances (see, e.g., \cite{kai-11,ccz-13,fmp+-15,fkpt-13,rot-17,kw-15,goe-15,frt-12,af-22} as well as \cite[Chapter 4]{ccz-14} and references therein).
An \emph{extended formulation} of a polyhedron $P$ is a linear system $Ay = b, By \le d$ in variables $y$ defining a polyhedron $Q$ that can be affinely projected onto $P$. The polyhedron $Q$, along with the affine projection that maps $Q$ to $P$, is called an \emph{extension} of $P$. When the projection is clear from the context, for a simple wording, we may refer to $Q$ itself as an extension or an \emph{extension polyhedron}.
The relevance of extended formulations for optimization comes from the fact that one may optimize any linear functional over $P$ by solving a linear program (LP) with feasible region $Q$ instead, which may yield a more compact formulation with fewer constraints. 
Especially for many problems in combinatorial optimization, and LP-based approaches in the form of relaxations, extended formulations have become a powerful tool since the associated polyhedra typically have an exponential number of facets. In some of these cases, significantly smaller extensions with only a polynomial number of facets have been shown to exist \cite{mar-91,yan-91,af-22,won-80,goe-15}. In other situations, such as for the fixed-shape partition polytopes that we discuss below, there does not even exist a known linear formulation \cite{bhr-92,bg-12,bh-17,hor-99}. 

In this paper, we study the connection between the set of circuits of a polyhedron and the set of circuits of an extension. The \emph{circuits} or elementary vectors \cite{r-69} of a polyhedron are the minimal linear dependence relations in its constraint matrix and include all edge directions (we provide a formal definition in \cref{sec:definitions}). Circuit walks, circuit augmentation schemes \cite{dhl-15}, and circuit diameters \cite{bfh-14} generalize the classical concepts of edge walks, the Simplex method, and combinatorial diameters by following steps along the more general set of circuits, so in particular through the interior of a polyhedron. Circuits and circuit walks are of particular interest for polyhedra associated with combinatorial optimization problems, so precisely in the setting where extended formulations are especially useful. This is because the circuits of polyhedra in combinatorial optimization can often be readily interpreted in terms of the underlying application \cite{kps-17}. For example, the circuits for network flow problems are cycles and well-known results like flow decomposition \cite{amo-93,bdfm-18} are immediate consequences of such interpretations.

We are interested in whether the set of circuits of a polyhedron, which may be difficult to describe directly, can be `accessed' through the set of circuits of an extension. We ask the following fundamental question:

\begin{center}\textit{%
When are all circuits of a polyhedron $P$ projections of circuits of a given extension $Q$?}
\end{center}

\noindent As we will see, the connection between the sets of circuits is far weaker than it is for the edge directions. We will be able to quantify this weaker behavior in several ways. Before we explain our main contributions in \cref{sec:outline}, we describe our original motivation for this work in \cref{sec:motivation}, and recall some necessary concepts and introduce convenient notation in \cref{sec:definitions}.

\subsection{Motivation} \label{sec:motivation}

Fixed-shape clustering is the task of partitioning a data set $X$ of $n$ items into $k$ clusters $C_1,\dots,C_k$ such that the number of items in each $C_i$ equals a fixed number $\kappa_i \in \N$, where $\sum_{i=1}^k \kappa_i = n$.
For a data set $X=\{x^{(1)},\dots,x^{(n)}\}\subset \R^d$, popular clustering objectives like least-squares assignments can be found through linear optimization over the so-called {\em fixed-shape partition polytopes} \cite{bhr-92,hor-98,hor-99,for-03,b-10,bg-12,bo-16}.
These are formed as the convex hull of all feasible {\em clustering vectors} $(c^{(1)},\dots,c^{(k)}) \in (\R^d)^k$, where each cluster $C_i$ is represented through a vector $c^{(i)} = \sum_{x \in C_i} x$ of items assigned to it.
For given $X$, $k$ and a vector of cluster sizes $\kappa := (\kappa_1,\dots,\kappa_k)$, we denote these polytopes by $P(X,k,\kappa)$.

Our original motivation for the work in this paper was based on our interest in the circuits of the polytopes $P(X,k,\kappa)$. For example, a characterization of the set of circuits would help with the computation of robustness measures for clusterings \cite{bh-17}, and lead to improved methods for gradual transitions between separable clusterings \cite{bhz-22}.
An explicit inequality description of the fixed-shape partition polytope $P(X,k,\kappa)$ is not known. Instead, computations are performed over a certain transportation polytope, which is given by the following system in variables $y \in \R^{k \times n}$ and which we denote by $T(n,k,\kappa)$: \vspace*{-0.25cm}
\begin{align*}
    \sum_{j=1}^n y_{ij} &= \kappa_i  \qquad\forall i \in [k] \\
    \sum_{i=1}^k y_{ij} &= 1         \;\qquad\forall j \in [n] \\
    y &\ge \zero
\end{align*}
Via the linear map $\pi_X \colon y \mapsto (c^{(1)},\dots,c^{(k)})$ with $c^{(i)} = \sum_{j=1}^n y_{ij} \cdot x^{(j)}$ for all $i \in [k]$, the polytope $T(n,k,\kappa)$ projects to $P(X,k,\kappa)$.

We became interested in whether the set of circuits of $P(X,k,\kappa)$ could be characterized via projecting from $T(n,k,\kappa)$. 
There are a couple of favorable properties of the two polyhedra that made such an approach especially promising \cite{bv-19a,bv-17}; for example, the edges of both polyhedra have near-identical characterizations and all circuit walks in $T(X,k,\kappa)$ are, in fact, edge walks. 
Nonetheless, somewhat surprisingly, in \cref{sec:partpoly} we exhibit that new circuits may appear in the projection onto $P(X,k,\kappa)$.

Our interest in the behavior of circuits and circuit walks under taking projections is further motivated by a well-known fact about \emph{edge} walks: for every edge walk in the original polyhedron, there is an edge walk in the extension that projects onto it. In particular, all edge directions of the original polyhedron are images of edge directions of the extension polyhedron (see \cref{sec:prelim} for a proof). 
In the context of linear programming, there is a pivot rule for the Simplex method that relies precisely on that relationship:
the shadow vertex pivot rule \cite{bor-87}. This pivot rule constructs a Simplex path by following an edge walk in a two-dimensional projection (shadow) of the feasible region of the LP. 
The shadow vertex pivot rule and modifications thereof play an important role in the probabilistic analysis of the Simplex method \cite{dh-20,st-04,ver-09} and in recent work on strong bounds for the performance of the Simplex method on 0/1 polytopes \cite{bdks-21}. Can one exploit similar ideas to design efficient circuit augmentation schemes? Again, this requires an understanding of how circuits and circuit walks behave under taking projections of polyhedra.

\subsection{Notation and Definitions} \label{sec:definitions}

We begin with a formal definition of the set of circuits of a polyhedron, following \cite{r-69,dhl-15,bsy-18,bfh-14,bfh-16}, and then introduce some new terminology for our purposes.

\paragraph{Circuits.}
Let $P = \{ x \in \R^n \colon Ax = b, Bx \le d \}$ be a polyhedron in $\R^n$ where $A \in \R^{p \times n}, B \in \R^{q \times n}$ and $b \in \R^p, d \in \R^q$.
The \emph{circuits} of $P$ with respect to its linear description are the vectors $g \in \ker(A) \setminus \{ \zero \}$ such that $Bg$ is support-minimal (w.r.t.\ inclusion) in $\{ By \colon y \in \ker(A) \setminus \{ \zero \} \}$. Here, $\ker(A)$ denotes the kernel of $A$, $\zero$ denotes the all-zero vector in appropriate dimension, and the \emph{support} of a vector $z \in \R^p$ is the set $\{ i \in [p] \colon z_i \ne 0 \}$. Circuits correspond to directions in the underlying space $\R^n$, and any form of normalization leads to a finite set of unique representatives for the directions. A standard normalization scheme is to assume co-prime integer components (which assumes rational data). However, for our purposes, it will be convenient to not assume any kind of normalization and rather view the set of circuits as a finite union of one-dimensional linear subspaces containing all possible directions.

The set of circuits depends on the description of $P$. We denote it as $\C(A,B)$ when we refer to a specific description; when a system defining $P$ is clear from the context, we use $\C(P)$ in place of $\C(A,B)$. For any description, $\C(P)$ always contains all edge directions of $P$, where an \emph{edge direction} is either the direction of an extreme ray of $P$ or a nonzero multiple of $u-v$ for some pair of adjacent vertices $u,v \in \V(P)$. Here, $\V(P)$ denotes the set of vertices of $P$. 

Unless stated otherwise, we assume a minimal description for $P$ when it is not provided explicitly; this implies that each inequality constraint defines a facet of $P$. This is not a restriction for $A$, as $\ker(A)$ is not affected by the existence of redundant equalities. For $B$, it is standard to assume irredundancy for a different reason: the addition of redundant inequalities to $B$ may lead to a larger set of circuits that are not tied to the geometry of the underlying polyhedron. The benefit of an irredundant description can most easily be seen through an equivalent `geometric' definition of the set of circuits:  $\C(A,B)$ consists of all potential edge directions as the right-hand side vectors $b$ and $d$ vary \cite{g-75}. With an irredundant description of $P$, all facets for any choice of $b$ and $d$ correspond to an original facet of $P$; in turn, $\C(P)$ can be constructed through forming any possible one-dimensional intersection of (translated) facets of $P$ \cite{bv-17}. 

\paragraph{Inheritance under affine projections.}
Let $P \subset \R^n, Q \subset \R^m$ be polyhedra such that $P = \pi(Q)$ for some affine map $\pi \colon \R^m \rightarrow \R^n$. We say that a circuit $g \in \C(P)$ \emph{is inherited from $Q$ under $\pi$} if $g \in \pi(\C(Q)) - \pi(\zero)$,
i.e., if $g$ is the image of a circuit of $Q$ under the `linearized' map $x \mapsto \pi(x) - \pi(\zero)$. If $\pi$ is clear from the context, we use the simpler wording $g \in \C(P)$ is inherited from $Q$.
It is easy to see that $\C(P)$ is unaffected by translations of the polyhedron $P$ (cf.~\cref{prop:bijection} in \cref{sec:prelim}). In our discussion of extensions of $P$, we may therefore restrict ourselves to polyhedra that \emph{linearly} project to $P$ (unless stated otherwise). 

Recall that $P$ and $Q$ are either given through an explicit description or are assumed to have a minimal description.
In the latter case, the assumption of minimality of the implicit linear descriptions is crucial: if we add suitable redundant inequalities to the description of $P$, we may always generate a circuit that is not inherited from $Q$. Likewise, by introducing redundancy to the description of $Q$, one could blow up the set of possible circuits in $Q$ such that all circuits of $P$ would ultimately be inherited from this expanded extended formulation.
Thus, our setting is well-defined to only obtain the strongest statements about the inheritance of circuits.

Most of our results will assume \emph{pointed} polyhedra, i.e., polyhedra given by $Ax = b, Bx \le d$ whose lineality space $\ker(A) \cap \ker(B)$ is trivial. This is appropriate for our purposes since $\C(A,B) = \ker(A) \cap \ker(B) \setminus \{ \zero \}$ if $\ker(A) \cap \ker(B) \ne \{ \zero \}$ and, hence, $\C(\pi(Q)) = \pi(\C(Q))$ is trivially satisfied for any polyhedron $Q$ with a nontrivial lineality space, for all linear maps $\pi$.

\subsection{Contributions and Outline} \label{sec:outline}

Our main contributions are twofold. First, we demonstrate that polyhedra do not necessarily inherit their circuits from extended formulations. This illustrates a fundamental difference between edge directions and circuits. After collecting some tools in \cref{sec:prelim} that simplify our discussion, we first show in \cref{sec:counterexamples} how to construct counterexamples with a minimal number of facets, vertices, and extreme rays. Our construction yields both bounded and unbounded polyhedra in every dimension greater than 2, with corresponding extensions just one dimension higher.

\begin{restatable}{theorem}{thmcounterex} \label{thm:counterex-family}
For all $m,n \in \N$ with $m > n \ge 3$, there exist full-dimensional pointed polyhedra $P \subset \R^n, Q \subset \R^m$ and a linear map $\pi \colon \R^m \rightarrow \R^n$ with $\pi(Q) = P$ such that $\C(P) \not\subseteq \pi(\C(Q))$ and $\C(P) \cap \pi(\C(Q))$ consists precisely of the edge directions of $P$.
Moreover, $Q$ can be chosen to be simple, and $P$ can be chosen to be either a polytope with $n+2$ facets and $n+2$ vertices, or a pointed polyhedral cone with $n+1$ facets and $n+1$ extreme rays.
\end{restatable}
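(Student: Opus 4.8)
The plan is to give two explicit families, one of bounded and one of unbounded polyhedra, and in each case to use a simplex, respectively a simplicial cone, as the extension $Q$. The key point is that a simplex and a simplicial cone have $\C(Q)$ equal to their set of edge directions, so $\pi(\C(Q))$ is forced to be a very restricted set --- only differences of images of vertices, respectively images of extreme-ray directions --- and one then only has to arrange that $P$ has a circuit outside this restricted set while no spurious member of the set happens to be a circuit of $P$.

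\emph{The bounded case.} I would take $P=P_n$ to be the $(n-2)$-fold pyramid over a quadrilateral: start with a quadrilateral $Q_0\subset\R^2$ with vertices $a_1,\dots,a_4$ in general position, one of them at $\mathbf 0$, and for $j=3,\dots,n$ successively cone with the new apex $\mathbf e_j$. Each step adds one vertex and one facet, so $P_n$ is a full-dimensional pointed polytope with $n+2$ vertices and $n+2$ facets, its facet normals being the coordinate vectors $\mathbf e_3,\dots,\mathbf e_n$ together with four vectors of the shape $\nu_i=(m_i,\delta_i,\dots,\delta_i)$, one per edge of $Q_0$, where $m_i\in\R^2$ is a normal of that edge; and the only pairs of non-adjacent vertices of $P_n$ are the two diagonals $\{a_1,a_3\},\{a_2,a_4\}$ of $Q_0$. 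For the extension I take $Q\subset\R^m$ (any $m\ge n+1$) to be a simplex with a linear surjection $\pi$ mapping its $m+1$ vertices onto the $n+2$ vertices of $P_n$, so that $\pi(Q)=P_n$ and $Q$ is simple; since $\C(Q)$ is the set of edge directions of $Q$, the set $\pi(\C(Q))$ is exactly the set of direction vectors $u-v$ between pairs of vertices of $P_n$. Every edge direction of $P_n$ lies in $\pi(\C(Q))$ by the fact recalled in \cref{sec:prelim}; conversely, the only circuits of $P_n$ that are vertex differences are the edge directions, because the two diagonal directions lie in $\{x_3=\dots=x_n=0\}$, hence are orthogonal to $\mathbf e_3,\dots,\mathbf e_n$, and so are circuits only if additionally orthogonal to some $\nu_i$, i.e.\ only if parallel to an edge of $Q_0$, which a diagonal in general position is not. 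Thus $\C(P_n)\cap\pi(\C(Q))$ is precisely the set of edge directions of $P_n$. Finally, $\C(P_n)\not\subseteq\pi(\C(Q))$ because the vector $g$ orthogonal to $\nu_1,\nu_3$ (the normals of the two facets over the opposite edges $a_1a_2$, $a_3a_4$) and to $\mathbf e_4,\dots,\mathbf e_n$ is a circuit, and a short computation with the $\nu_i$ shows that for $Q_0$ in general position $g$ has nonzero third coordinate and is parallel to no difference of two vertices of $P_n$.

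\emph{The unbounded case.} Here I would take $P$ to be the homogenization $\mathrm{hom}(P_0)=\mathrm{cone}\bigl(\{(x,1):x\in P_0\}\bigr)\subset\R^n$ of a full-dimensional polytope $P_0\subset\R^{n-1}$ with $n+1$ vertices and $n+1$ facets (again an iterated pyramid over a quadrilateral, one dimension lower). Then $P$ is a full-dimensional pointed cone with $n+1$ extreme rays and $n+1$ facets, its edges being exactly those extreme rays. Via homogenization, a vector $(h,1)$ is a circuit of $P$ exactly when $h$ is a vertex of the arrangement of facet hyperplanes of $P_0$, and an extreme-ray direction exactly when $h$ is a vertex of $P_0$; since $P_0$ is not a simplex it has such an arrangement vertex that is not a polytope vertex (for instance the crossing point, inside the coordinate plane $\R^2$, of the two lines spanned by the non-adjacent edges of the quadrilateral), and the corresponding $(h,1)$ is a circuit of $P$ that is not an extreme-ray direction. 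For the extension take $Q\subset\R^m$ a simplicial cone with $\pi$ a linear map sending its $m$ extreme rays onto the $n+1$ extreme rays of $P$, so $\pi(Q)=P$ and $Q$ is simple; since $\C(Q)$ is the set of extreme-ray directions of $Q$, the set $\pi(\C(Q))$ is exactly the set of extreme-ray directions of $P$, which are precisely its edge directions. Hence $\C(P)\cap\pi(\C(Q))$ is the set of edge directions of $P$, while $(h,1)$ witnesses $\C(P)\not\subseteq\pi(\C(Q))$.

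The reductions --- translation invariance of $\C$, the inheritance of edge directions from \cref{sec:prelim}, and the descriptions of $\C$ for a simplex, a simplicial cone, and a homogenization --- are routine, and the passage from $m=n+1$ to larger $m$ is handled already by allowing the surjection $\pi$ above. The main obstacle is the linear-algebra bookkeeping for $P_n$ and $P_0$: one must check that the iterated pyramid really has exactly $n+2$ (respectively $n+1$) facets with the stated normals, produce the explicit non-inherited circuit, and isolate the finitely many general-position conditions on $Q_0$ under which neither the diagonal directions nor the extra circuit coincides with a vertex difference --- so that the forbidden choices of $Q_0$ form only a finite union of hypersurfaces and a valid $Q_0$ indeed exists.
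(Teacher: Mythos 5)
Your proposal is correct and follows essentially the same route as the paper: the paper's extensions are exactly the simplex $S_m$ and the simplicial cone $\R^m_{\ge 0}$, its image polytope is (combinatorially) an iterated pyramid over a quadrilateral, its cone is the homogenization of such a polytope, and the non-inherited circuit arises from the intersection of two non-adjacent facets (equivalently, a non-vertex basic solution of the base polytope), just as in your construction. The only difference is that the paper fixes explicit integer coordinates via a matrix $\Pi_{n,m}$ and verifies the facet/adjacency structure directly, whereas you work with a general-position quadrilateral and a genericity argument; both versions of the bookkeeping go through.
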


Then, we show that one can even construct counterexamples with an exponential gap in the number of unique directions between the subset of circuits that are inherited and the entire set of circuits.

\begin{restatable}{theorem}{thmexponential} \label{thm:counterex-exponential}
For all $n \ge 3$, there exist pointed polyhedra $P \subset \R^n, Q \subset \R^m$ and a linear map $\pi \colon \R^m \rightarrow \R^n$ with $\pi(Q) = P$ such that $\C(P)$ contains $2^{\Omega(n)}$ unique circuit directions while the number of unique circuit directions in $\C(Q)$ is $O(n^2)$.
\end{restatable}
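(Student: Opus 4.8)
The plan is to realise the gap with a polytope $P \subset \R^n$ having only linearly many vertices but exponentially many facets — hence, as I will argue, exponentially many circuits — together with the simplex $Q$ spanned by affinely independent preimages of the vertices of $P$, which carries only $O(n^2)$ circuits for a trivial reason. Concretely, write $n = 2k$ (odd $n$ is handled by adding a one-dimensional summand or passing to $P \times \{\zero\} \subset \R^n$, and small $n$ are vacuous since $2^{\Omega(n)}$ is then a constant). Let $T \subset \R^2$ be an equilateral triangle with the origin as barycenter, described irredundantly by $v_j \cdot x \le 1$ for $j \in \{1,2,3\}$, where $v_1,v_2,v_3$ are its unit outer normals (pairwise angle $120^\circ$). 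Let $P$ be the $k$-fold free sum $T \oplus \dots \oplus T \subset \R^{2k}$, i.e.\ the convex hull of the $3k$ points obtained by placing a vertex of $T$ in one block of $\R^2 \times \dots \times \R^2$ and $\zero$ in the others. By the standard description of free sums — dually, the polar of a free sum is the Cartesian product of the polars of the summands — $P$ is a full-dimensional pointed polytope with exactly $3k$ vertices and exactly $3^k$ facets, the latter having normals $b_{(j_1,\dots,j_k)} := (v_{j_1},\dots,v_{j_k})$ for $(j_1,\dots,j_k) \in \{1,2,3\}^k$ (right-hand side $1$). (Note one must use a non-centrally-symmetric summand: the free sum of segments is the cross-polytope, whose description has $2^n$ facets but whose circuit set is only the $O(n^2)$ edge directions.)

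For the extension, set $v := 3k = |\V(P)|$, let $Q \subset \R^{v-1}$ be any $(v-1)$-dimensional simplex, and let $\pi$ be the affine map carrying the vertices of $Q$ bijectively onto those of $P$, so $\pi(Q) = \conv(\pi(\V(Q))) = P$; both $P$ and $Q$ are bounded, hence pointed. By the geometric description of circuits recalled in \cref{sec:definitions}, the circuits of a polytope are the directions of the one-dimensional intersections of translated facet hyperplanes; for a simplex the $v$ facet hyperplanes are in general position, so the only such lines are the $\binom{v}{2}$ edges, giving $|\C(Q)| = \binom{3k}{2} = O(n^2)$.

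The substance of the proof — and the step I expect to be the main obstacle — is to show $|\C(P)| = 2^{\Omega(n)}$. For each $j$ fix the unit edge direction $e_j$ of $T$ perpendicular to $v_j$, chosen compatibly with the rotational symmetry, and for $c = (c(1),\dots,c(k)) \in \{1,2,3\}^k$ consider the ``diagonal'' direction $g_c := (e_{c(1)},\dots,e_{c(k)}) \ne \zero$. Since $e_1,e_2,e_3$ are pairwise non-parallel, the $g_c$ represent $3^k$ distinct directions, so it suffices to show each is a circuit, i.e.\ (by the geometric description) that the facet normals $b_{(j_1,\dots,j_k)}$ with $\sum_i v_{j_i} \cdot e_{c(i)} = 0$ span the hyperplane $g_c^\perp$. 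Using that $v_j \cdot e_{c(i)}$ vanishes exactly for $j = c(i)$ and that the two remaining values $v_{c(i)+1}\cdot e_{c(i)}$ and $v_{c(i)-1}\cdot e_{c(i)}$ are negatives of each other (by the reflective symmetry of the triangle across the axis through $v_{c(i)}$), one sees that $g_c$ is orthogonal to $b_0 := b_{(c(1),\dots,c(k))}$ and to every normal obtained from $b_0$ by simultaneously switching one block to a ``$+1$'' neighbour and a different block to a ``$-1$'' neighbour (indices mod $3$). Taking an arbitrary $h = (h^{(1)},\dots,h^{(k)})$ orthogonal to all of these and subtracting the equation $b_0 \cdot h = 0$ from those of the modified normals yields $(v_{c(i_1)+1}-v_{c(i_1)}) \cdot h^{(i_1)} + (v_{c(i_2)-1}-v_{c(i_2)}) \cdot h^{(i_2)} = 0$ for all $i_1 \ne i_2$; for $k \ge 3$ this forces the quantities $(v_{c(i)+1}-v_{c(i)}) \cdot h^{(i)}$ to be equal across $i$ and the $(v_{c(i)-1}-v_{c(i)}) \cdot h^{(i)}$ to be their common negative, and since the vectors $v_{c(i)+1}-v_{c(i)}$ and $v_{c(i)-1}-v_{c(i)}$ are linearly independent in $\R^2$, each $h^{(i)}$ is then pinned down by one shared scalar. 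Hence the space of such $h$ is at most one-dimensional; as it contains $g_c \ne \zero$, it equals $\R g_c$, so those normals span $g_c^\perp$ and $g_c \in \C(P)$. This yields $|\C(P)| \ge 3^k = 2^{\Omega(n)}$, which with the bound on $|\C(Q)|$ proves the theorem. The delicate point is exactly this last linear-algebra step: exponentially many facet normals are orthogonal to $g_c$, and one must isolate from them an explicit subset of rank $n-1$.
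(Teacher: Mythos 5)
Your construction is correct, but it is genuinely different from the paper's. The paper obtains the gap via \emph{homogenization}: it first proves (\cref{lem:hom-circuits}) that the circuits of $\hom(P)$ at height $1$ are exactly the \emph{basic solutions} of $P$, then exhibits a polytope — the cross-polytope cropped by a hypercube, $Q_n \cap [-\delta,\delta]^n$ — with only $4n(n-1)$ vertices but at least $2^n$ basic solutions; the extension is the nonnegative orthant $\R^{\V(P)}_{\ge 0}$, whose circuits are the $O(n^2)$ unit vectors. You instead take $P$ to be a free sum of triangles with $3k$ vertices and $3^k$ facets, use the simplex on $\V(P)$ as the extension, and verify by hand that each diagonal direction $g_c$ is a circuit. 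Both routes exploit the same underlying phenomenon (a polytope with few vertices but exponentially many facets/basic solutions), but the paper's key lemma is reused elsewhere (e.g., in \cref{lem:balas-circuits}), whereas your argument is self-contained at the cost of the explicit rank computation. That computation is sound: for $k \ge 3$ the relations $a_{i_1} + b_{i_2} = 0$ for all $i_1 \ne i_2$ do force a common $\beta$, pinning each $h^{(i)}$ down to a one-parameter family, so the normals orthogonal to $g_c$ span $g_c^\perp$ and $g_c \in \C(P)$. (You are right to exclude $k=2$: there the three relevant normals $b_0$, $d_{1,2}$, $d_{2,1}$ satisfy $b_0 = -\tfrac{1}{3}(d_{1,2}+d_{2,1})$ and have rank only $2$, so $g_c$ is in fact \emph{not} a circuit of the free sum of two triangles; as you note, small $n$ is immaterial for the asymptotic claim.) Two cosmetic points: the theorem asks for a \emph{linear} $\pi$, so either translate $Q$ or take $Q = S_{3k}$ with the linear map $x \mapsto \sum_i x_i u_i$ (using $\zero \in P$), and note that \cref{prop:bijection} makes this distinction harmless anyway.
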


Our constructions in \cref{sec:counterexamples} range from simple, pathological examples, including zonotopes, to relevant polyhedra from combinatorial optimization. Specifically, we conclude \cref{sec:counterexamples} with a transfer to the fixed-shape partition polytopes from \cref{sec:motivation}, 
where an inheritance fails despite these polyhedra exhibiting a number of favorable properties.

\newcommand{\questionPi}{%
    \textit{Which linear maps $\pi$ have the property that, for \emph{every} polyhedron $Q$, all circuits of $\pi(Q)$ are inherited from $Q$?}
}
\newcommand{\questionP}{%
    \textit{Which polyhedra $P$ inherit their circuits from \emph{every} extension?}
}
\newcommand{\questionQ}{%
    \textit{For which polyhedra $Q$ does \emph{every} polyhedron $P$ that is a linear projection of $Q$ inherit its circuits from $Q$?}
}

As our second contribution, we consider the following three natural questions in \cref{sec:characterization}:
\begin{enumerate}[label=(Q\arabic*),align=left,leftmargin=1.5\parindent,labelwidth=1.5\parindent]
    \item \questionPi \label{qPi}
    \item \questionP  \label{qP}
    \item \questionQ  \label{qQ}
\end{enumerate}

It is not hard to exhibit two sufficient properties for membership in the classes stated in questions \cref{qP,qPi}, respectively. We show that injective maps belong to the class of maps for question \cref{qPi}, and we show that polyhedra in which all circuits are edge directions belong to the class of polyhedra for \cref{qP}. As these observations are useful multiple times for our discussion, the brief arguments can already be found in the preliminaries in \cref{sec:prelim}.

What is more interesting is that these properties are, in fact, both sufficient and {\em necessary}, thus completely resolving questions \cref{qP,qPi}. More precisely, in \cref{sec:characterization}, we obtain the following two results. 
First, we strengthen \cref{thm:counterex-family} and show that, for any linear projection, one can construct a (bounded) counterexample in which no circuits other than the edge directions are inherited, unless the projection map is injective.

\begin{restatable}{theorem}{thmbijection} \label{thm:bijection}
Let $\pi \colon \R^m \rightarrow \R^n$ be a linear map such that $\dim(\pi(\R^m)) \ge 3$. 
Then $\C(\pi(Q)) \subset \pi(\C(Q))$ for all polyhedra $Q \subset \R^m$ if and only if $\pi$ is injective.
In particular, if $\pi$ is not injective, there exists a full-dimensional simple polytope $Q \subset \R^m$ such that $\C(\pi(Q)) \not\subset \pi(\C(Q))$ and $\C(\pi(Q)) \cap \pi(\C(Q))$ consists precisely of the edge directions of $\pi(Q)$.
\end{restatable}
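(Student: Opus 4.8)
The forward implication is already among the preliminary observations in \cref{sec:prelim}: if $\pi$ is injective, then $\C(\pi(Q)) \subseteq \pi(\C(Q))$ for every $Q$. So the plan is to prove the converse together with the stated counterexample. Assume $\pi$ is not injective and put $r := \dim(\pi(\R^m))$, so that $3 \le r < m$, the strict inequality being exactly non-injectivity. I will use repeatedly that circuits are equivariant under invertible linear changes of coordinates: if $L$ is invertible, then $\C(L(Q)) = L(\C(Q))$, and minimality of the description is preserved (the same holds for invertible maps on the target, and in all cases simplicity, full-dimensionality, and the inheritance relation transfer accordingly). Choosing bases of $\R^m$ and $\R^n$ adapted to $\ker\pi$ and $\pi(\R^m)$, I may therefore normalize $\pi$ to the standard map $(x_1,\dots,x_m) \mapsto (x_1,\dots,x_r,0,\dots,0)$. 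The statement then reduces to: for every $m > r \ge 3$, construct a full-dimensional simple polytope $Q \subseteq \R^m$ such that, under the projection forgetting the last $m-r$ coordinates, $\C(\pi(Q)) \not\subseteq \pi(\C(Q))$ while $\C(\pi(Q)) \cap \pi(\C(Q))$ is exactly the set of edge directions of $\pi(Q)$.

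The idea is to lift the one-dimension-higher counterexample from \cref{thm:counterex-family}. Applying it with parameters $(r+1,\,r)$ yields a full-dimensional simple polytope $Q_0 \subseteq \R^{r+1}$, a linear map $\pi_0$, and $P_0 := \pi_0(Q_0) \subseteq \R^r$ with $\C(P_0) \not\subseteq \pi_0(\C(Q_0))$ and $\C(P_0) \cap \pi_0(\C(Q_0))$ equal to the edge directions of $P_0$. After one further coordinate change on $\R^{r+1}$ (harmless by the equivariance above), I may take $\pi_0$ to be the coordinate projection dropping the last coordinate. Now set
\[
  Q \;:=\; Q_0 \times [0,1]^{\,m-r-1} \;\subseteq\; \R^{r+1} \times \R^{\,m-r-1} = \R^m .
\]
Then $Q$ is full-dimensional, and it is simple because a product of simple polytopes is simple: at a product vertex the numbers of incident facets of the two factors add, and they already sum to the total dimension. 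Moreover $\pi(Q) = P_0$, embedded in $\R^r \times \{\zero\}$ inside $\R^n$, since $\pi$ acts as $\pi_0$ on the first $r+1$ coordinates and annihilates the cube block.

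It remains to compute $\C(Q)$. Writing $B_0 x \le d_0$ for a minimal (inequality-only, as $Q_0$ is full-dimensional) description of $Q_0$, a circuit $(g_x, g_z)$ of $Q$ is a nonzero vector for which the support of $(B_0 g_x,\, g_z,\, -g_z)$ is inclusion-minimal; a short case distinction (using $\ker B_0 = \{\zero\}$) shows this forces either $g_z = \zero$ and $g_x \in \C(Q_0)$, or $g_x = \zero$ and $g_z = \pm\unit{i}$ for some $i$. Hence $\C(Q) = (\C(Q_0) \times \{\zero\}) \cup (\{\zero\} \times \{\pm\unit{i}\})$. The newly created cube directions lie in $\ker\pi$, so $\pi(\C(Q))$ equals the embedded image of $\pi_0(\C(Q_0))$ together with the (irrelevant) zero vector; and $\C(\pi(Q)) = \C(P_0)$ up to the embedding, since passing from $P_0$ to $P_0 \times \{\zero\}$ only appends equality constraints and thus leaves the kernel—hence the circuits—unchanged. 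Feeding these two identities into the conclusion of \cref{thm:counterex-family} transfers both $\C(\pi(Q)) \not\subseteq \pi(\C(Q))$ and $\C(\pi(Q)) \cap \pi(\C(Q)) = \{\text{edge directions of }\pi(Q)\}$, giving the asserted counterexample; since such a $Q$ exists whenever $\pi$ is not injective, this also establishes the nontrivial direction of the equivalence.

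The main point to get right is the strong form of the conclusion: padding $Q_0$ with the cube must not create a circuit of $Q$ whose $\pi$-image is a non-edge circuit of $\pi(Q)$, nor destroy the earlier non-inheritance. This is exactly what the computation of $\C(Q)$ secures—the only circuits of $Q$ beyond those inherited from $Q_0$ are the cube edge directions, and all of these are annihilated by $\pi$—so the padding neither adds nor restores inheritance of any circuit of $P_0$. The remaining ingredients (equivariance of $\C(\cdot)$ under the coordinate normalizations, simplicity of the product, and the support-minimality case check) are routine once the set-up is in place.
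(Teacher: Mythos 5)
Your argument is correct. It shares the paper's overall strategy---transport the counterexample of \cref{thm:counterex-family} to an arbitrary non-injective $\pi$ using the equivariance of circuits under linear isomorphisms (\cref{prop:bijection})---but the transport mechanism differs. The paper keeps the parameters $(m,n)$ fixed: it takes the counterexample $(Q,\sigma)$ from \cref{thm:counterex-family} with extension already living in $\R^m$, observes that $\sigma$ and $\pi$ are both surjections onto the same $n$-dimensional image, factors $\pi=\sigma\circ\tau$ with $\tau$ an invertible map of $\R^m$, and sets $\widetilde Q:=\tau^{-1}(Q)$; no product construction or circuit computation is needed beyond \cref{prop:bijection}. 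You instead normalize $\pi$ to a coordinate projection, take only the minimal counterexample in $\R^{r+1}$, and pad it to $\R^m$ with a hypercube factor, which then requires \cref{lem:cartesian} (or your equivalent support-minimality case check) to verify that the padding creates no new circuits other than the cube edge directions, all of which are killed by $\pi$. What the paper's route buys is brevity, precisely because \cref{thm:counterex-family} already supplies extensions in every dimension $m>n$; what your route buys is that it makes explicit and self-contained why enlarging the extension polyhedron by a trivial factor can neither restore inheritance nor break the ``only edge directions are inherited'' conclusion, at the cost of one extra lemma. Both proofs establish the full strong form of the statement (simplicity and full-dimensionality of $Q$, and the exact identification of the inherited circuits with the edge directions).
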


Second, we provide a formal answer to question \cref{qP}.

\begin{restatable}{theorem}{thmedge} \label{thm:edge}
Let $P \subset \R^n$ be a pointed polyhedron. All circuits in $\C(P)$ are edge directions of $P$ if and only if $\C(P) \subset \pi(\C(Q))$ 
for all polyhedra $Q \subset \R^m$ and all linear maps $\pi \colon \R^m \rightarrow \R^n$ with $\pi(Q) = P$.
\end{restatable}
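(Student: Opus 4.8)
The plan is to prove the two implications separately; the forward direction is short, the reverse one substantial. For the \emph{forward direction}, assume every circuit in $\C(P)$ is an edge direction of $P$, and fix an extension $\pi(Q)=P$; we may assume $Q$ is pointed, since otherwise $\C(\pi(Q))=\pi(\C(Q))$ holds trivially. This is then exactly the sufficiency observation already recalled in \cref{sec:prelim}: for an edge (or extreme ray) $e$ of $P$, the set $F:=\{y\in Q:\pi(y)\in e\}$ is a nonempty face of $Q$ — it is the preimage under a linear map of the face $e$ of $\pi(Q)$ — and $\pi$ maps $F$ onto $e$. Since $F$ is a pointed polyhedron, the linear span of its edge directions equals the span of $F-F$, which $\pi$ maps onto the line through the direction of $e$; hence some edge direction of $F$ projects to a nonzero multiple of that direction. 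As edge directions of the face $F$ are edge directions of $Q$ and therefore lie in $\C(Q)$, the direction of $e$ is inherited. Thus every edge direction of $P$ is inherited, and by hypothesis $\C(P)$ consists exactly of these, so $\C(P)\subseteq\pi(\C(Q))$.

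For the \emph{reverse direction} I argue the contrapositive: given a circuit $g\in\C(P)$ that is not an edge direction, construct an extension $\pi(Q)=P$ with $g\notin\pi(\C(Q))$. After the reductions of \cref{sec:prelim} we take $P$ full-dimensional, pointed, and irredundantly described, and build on two ``pure'' cases, recombining at the end through $P=\conv(\V(P))+\rec(P)$. If $P$ is a pointed cone, write $P=\{V\mu:\mu\ge\zero\}$ with $V$ the matrix of extreme-ray generators; since $\C(\R^k_{\ge 0})=\{\unit{1},\dots,\unit{k}\}$, the map $\pi\colon\R^k_{\ge 0}\to\R^n$, $\mu\mapsto V\mu$, satisfies $\pi(\C(\R^k_{\ge 0}))=\{\text{extreme-ray directions of }P\}=\{\text{edge directions of }P\}$, which omits $g$. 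If $P$ is bounded and \emph{no} non-edge circuit of $P$ is parallel to a difference of two vertices of $P$, then for $P=\conv\{v_1,\dots,v_N\}$ the simplex extension $\pi\colon\Delta^{N-1}\to\R^n$, $\lambda\mapsto\sum_i\lambda_i v_i$, has $\C(\Delta^{N-1})=\{\unit{i}-\unit{j}:i\ne j\}$ and hence $\pi(\C(\Delta^{N-1}))=\{v_i-v_j:i\ne j\}$, which again omits $g$.

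The remaining case — a bounded $P$ that does possess a non-edge circuit parallel to a vertex difference, with zonotopes as the guiding example — is the heart of the argument. Here the idea is to realize $P$ as a linear projection of a \emph{product of simplices} $Q=\Delta^{a_1}\times\dots\times\Delta^{a_r}$, sending the vertices of the $\ell$-th factor to a vertex set of a polytope $P_\ell$ so that $\pi(Q)=P_1+\dots+P_r$; a short argument shows that every circuit of a product of pointed polyhedra is supported in a single factor, so $\pi(\C(Q))=\bigcup_\ell\{u-w:u,w\text{ vertices of }P_\ell\}$. It then suffices to find a Minkowski decomposition $P=P_1+\dots+P_r$ in which $g$ is not parallel to any within-summand vertex difference — for a zonotope $P=\sum_\ell[\zero,v_\ell]$ the decomposition into generating segments works, since a non-edge circuit of a zonotope is never parallel to a generator $v_\ell$. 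Producing such a decomposition (or, more generally, \emph{some} extension hiding one non-edge circuit) for an arbitrary such $P$ is where the homogenization and disjunctive-programming constructions are needed, and I expect this to be the main obstacle; the rest — verifying that the circuits of the products and orthants project as claimed, recombining the bounded and cone cases for a general pointed $P$ via a product of the respective extensions (or passing through homogenization to reduce to a pure case), and keeping all descriptions irredundant in the presence of equality constraints — is routine bookkeeping.
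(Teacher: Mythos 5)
Your forward direction is fine (it is \cref{prop:edge}), and your treatment of the cone case, the ``simplex extension'' case, and the recombination of a bounded part with the recession cone via a Cartesian product all match what the paper does. The gap is exactly where you say you expect it: the case of a polytope $P$ with a non-edge circuit $g$ that \emph{is} parallel to a difference of two vertices. Your proposed fix --- a Minkowski decomposition $P=P_1+\dots+P_r$ in which $g$ is parallel to no within-summand vertex difference, realized as a projection of a product of simplices --- is not available in general: many polytopes are Minkowski-indecomposable, and even decomposable ones need not admit a decomposition avoiding $g$. The paper's own example $P'_3$ (with $g=\unit{3}$ and vertices $(1,0,0),(1,0,1)$ differing by $\unit{3}$) already sits in this hard case, and no summand-based argument is offered for it. So as written the reverse direction is incomplete.

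What the paper does instead is a \emph{union} construction rather than a sum. It proves the pointwise statement (\cref{thm:single-edge}) via Balas's disjunctive-programming extension (\cref{prop:balas}): cover $\V(P)$ by a family $\mathcal{P}$ consisting of singletons $\{v\}$ together with, for each pair $\{u,v\}$ of vertices with $u-v\in\R g$, a small parallelogram $P_{\{u,v\}}=\conv\{u,v,\tfrac{u+v}{2}\pm\varepsilon z\}$ contained in the minimal face of $P$ containing $u$ and $v$ --- this face has dimension at least $2$ precisely because $g$ is not an edge direction, which is where the hypothesis enters. Then $P=\conv(\bigcup\mathcal{P})$, and \cref{lem:balas-circuits} shows every circuit of the Balas extension $Q_{\mathcal{P}}$ projects either to an edge direction of a single member of $\mathcal{P}$ (and no parallelogram has an edge in direction $g$) or to a difference of basic solutions of two distinct members (which are separated by hyperplanes with normals orthogonal to $g$, for $\varepsilon$ small). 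Hence $g\notin\pi(\C(Q_{\mathcal{P}}))$. This ``absorb each bad vertex pair into a two-dimensional gadget whose edges avoid $g$'' step, together with the circuit analysis of the disjunctive extension, is the missing idea; your product-of-simplices observation is essentially the special case of \cref{lem:balas-circuits} where all members of $\mathcal{P}$ are singletons, and that special case is exactly what fails when $g$ is a vertex difference.
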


Our proofs of \cref{thm:edge,thm:bijection} in \cref{sec:characterization} are constructive, and build upon observations and tools collected in \cref{sec:prelim,sec:counterexamples}.
Finally, we give a partial answer to question \cref{qQ}, showing that no polyhedron with a non-degenerate vertex has the property that \cref{qQ} asks for.

\begin{restatable}{theorem}{nondegenerate} \label{thm:counterex-nondegenerate}
Let $Q \subset \R^m$ be a polyhedron with $\dim(Q) \ge 4$. If $Q$ has a non-degenerate vertex, then there exists a linear map $\pi \colon \R^m \rightarrow \R^{\dim(Q)-1}$ such that $\pi(Q)$ is full-dimensional and $\C(\pi(Q)) \not\subset \pi(\C(Q))$.
\end{restatable}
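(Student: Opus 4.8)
The plan is to localize the construction at the given non-degenerate vertex and then transplant the cone counterexample from \cref{thm:counterex-family}. Write $d:=\dim(Q)$. First I would reduce to $m=d$: restricting to $\aff(Q)$, any linear $\pi\colon\R^m\to\R^{d-1}$ factors through the coordinate projection onto a copy of $\R^d$ containing $\aff(Q)$, and that projection is injective on $Q$, hence does not change $\C(Q)$ or $\pi(\C(Q))$. So assume $Q\subset\R^d$ is full-dimensional, let $v$ be the non-degenerate vertex, and choose affine coordinates (and orientations) so that $v=\zero$, the $d$ facet-defining hyperplanes through $v$ are $\{x_i=0\}$ for $i\in[d]$, and the edge of $Q$ at $v$ along the $i$-th coordinate increases $x_i$. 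Since each $\{x_i=0\}$ is facet-defining, $Q$ lies in one of its closed halfspaces, and the last condition forces $Q\subset\R^d_{\ge 0}$, with tangent cone $T_vQ=\R^d_{\ge 0}$. In particular $\pm\unit i\in\C(Q)$ for all $i$, as these are edge directions of $Q$ at $v$.

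Next I would fix the projection. Consider $\pi=\pi_u\colon\R^d\to\R^{d-1}$ with $\ker\pi_u=\R u$, where $u$ has at least two positive and at least two negative coordinates (possible since $d\ge 4$) and is otherwise generic. Three facts follow. (i) Neither $u$ nor $-u$ lies in $\R^d_{\ge 0}$ and $\R^d_{\ge 0}\cap\R u=\{\zero\}$; by convexity $(v+\R u)\cap Q=\{v\}$, so $\pi_u^{-1}(\zero)\cap Q=\{\zero\}$. (ii) Since $Q\subset\R^d_{\ge 0}$, the image $P:=\pi_u(Q)$ is full-dimensional in $\R^{d-1}$ and contained in $K:=\pi_u(\R^d_{\ge 0})$; because $u$ has mixed signs, $K$ is a pointed cone, and the ``two of each sign'' condition guarantees that all $d$ generators $\pi_u\unit1,\dots,\pi_u\unit d$ are extreme, so $K$ is not simplicial (as $d>d-1=\dim K$). (iii) Combining (i) and (ii) with the continuity in $y$ of the minimum-norm solution of the parametric linear program $\{x\ge\zero : \pi_u(x)=y\}$, one obtains that $\zero$ is a vertex of $P$ and that the tangent cone of $P$ at $\zero$ equals $K$; hence the facet-defining hyperplanes of $P$ through $\zero$ are exactly those of $K$.

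Now the new circuit. Every pointed full-dimensional cone in $\R^{d-1}$ with $d$ extreme rays arises as $\pi_u(\R^d_{\ge 0})$ for a suitable mixed-sign $u$ (map $\unit i$ to the $i$-th ray; the kernel is the, necessarily mixed-sign, linear dependence among the rays), so the cone case of \cref{thm:counterex-family} applies to $K$: it has a circuit $g^\ast$ that is not an extreme-ray direction. Concretely $g^\ast=\pi_u(w)$, where $w$ has small support and its nonzero entries are read off from the unique (up to scaling) dependence $\sum_i u_i\,\pi_u\unit i=\zero$ (whose coefficient vector is $\propto u$): $g^\ast$ is the direction common to the spans of two ``opposite'' facets of $K$ and lies on no other facet hyperplane, hence is support-minimal. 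Since those facet hyperplanes are also facet hyperplanes of $P$, translating them exhibits $\R g^\ast$ as a one-dimensional intersection of (translated) facets of $P$, i.e.\ $g^\ast\in\C(P)$. Moreover $\C(\R^d_{\ge 0})=\{\pm\unit i\}$, so $g^\ast\notin\pi_u(\C(\R^d_{\ge 0}))$; in particular $g^\ast$ is not a multiple of any $\pi_u\unit i$.

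It remains to choose $u$ so that $g^\ast\notin\pi_u(\C(Q))$, and this is the main obstacle. The set $\C(Q)$ is finite. For each $c\in\C(Q)$, the locus $\{u : \pi_u(c)\parallel g^\ast(u)\}$ in the parameter space of admissible $u$ is cut out by polynomial conditions; I would show it is a proper subvariety by exhibiting, for every such $c$, one admissible $u$ with $\pi_u(c)\not\parallel g^\ast(u)$. The reason this is possible is that $g^\ast(u)=\pi_u(w(u))$ with $w(u)$ a \emph{genuinely} $u$-dependent direction (its nonzero coordinates are ratios of entries of $u$), so the family $\{g^\ast(u)\}$ is not frozen parallel to the fixed direction $c$: when $c$ is supported on the same small coordinate set as $w$, parallelism reduces to a single ratio condition on two entries of $u$; otherwise it reduces to $u$ lying in a fixed proper subspace. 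Choosing $u$ outside the finite union of these subvarieties — and generic enough for the facts in the second paragraph — yields a linear map $\pi\colon\R^d\to\R^{d-1}$ with $\pi(Q)$ full-dimensional and $\C(\pi(Q))\not\subset\pi(\C(Q))$. The delicate point is precisely that the candidate non-inherited circuit moves with $\pi$, so one must show the moving direction $g^\ast(u)$ escapes the finitely many moving directions $\pi_u(\C(Q))$; a secondary bookkeeping point is identifying the exact form of $g^\ast(u)$ for $d\ge 5$, where $K$ is no longer a cone over a quadrilateral, which is supplied by (the proof of) \cref{thm:counterex-family}.
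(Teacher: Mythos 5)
Your overall strategy --- normalize so the non-degenerate vertex sits at $\zero$ with inner cone $\R^d_{\ge 0}$, project along a family of kernels so that the image of the orthant is a pointed non-simplicial cone contributing a non-edge circuit to the image polyhedron, then use finiteness of $\C(Q)$ to pick a member of the family that misses all circuits of $Q$ --- is exactly the paper's. But two steps in your execution have genuine gaps. First, the existence and identification of the non-inherited circuit $g^\ast(u)$: \cref{thm:counterex-family} exhibits \emph{one} specific cone $R_{d-1}$ with a non-edge circuit; it does not assert that \emph{every} pointed $(d-1)$-dimensional cone with $d$ extreme rays has one, so ``the cone case of \cref{thm:counterex-family} applies to $K$'' does not follow from ``$K$ arises as $\pi_u(\R^d_{\ge0})$.'' Moreover, for $d\ge5$ two facet hyperplanes of a $(d-1)$-dimensional cone intersect in a $(d-3)$-dimensional subspace, not a line, so ``the direction common to the spans of two opposite facets'' is not even a one-dimensional candidate; the facial structure of $\pi_u(\R^d_{\ge0})$ depends on the Radon partition of $u$ (it need not have only $d$ facets, and the coordinate-hyperplane structure of $R_{d-1}$ that forces circuits to have small support is lost), and you have not pinned it down. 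Second, the genericity step: since $g^\ast(u)$ moves with $u$, you must show for each fixed $c\in\C(Q)$ that $\pi_u(c)\parallel g^\ast(u)$ fails for some admissible $u$; you assert this is a proper subvariety condition, but it cannot be verified without the explicit form of $g^\ast(u)$ from the first step, which is missing precisely in the regime $d\ge 5$.

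The paper avoids both difficulties with one device. It uses the discrete family $\pi_\alpha$, $\alpha\in\N\setminus\{1\}$, obtained by rescaling the third row of the fixed matrix $\Pi_{d-1,d}$. Every cone $\pi_\alpha(\R^d_{\ge0})$ then has the same controlled irredundant description ($d$ facets, $d-1$ of them coordinate hyperplanes, so circuits have support at most two in the facet slacks), and the non-inherited circuit is the \emph{fixed} direction $\unit{3}$ for all $\alpha$. The avoidance argument then becomes immediate: the preimages $K_\alpha=\pi_\alpha^{-1}(\R\unit{3})$ are two-dimensional subspaces with $K_\alpha\cap K_\beta=\{\zero\}$ for $\alpha\ne\beta$, so each of the finitely many circuit lines of $Q$ lies in at most one $K_\alpha$, and some $\alpha$ works. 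To salvage your continuous-kernel version you would need to restrict to kernels $u$ for which $\pi_u(\R^d_{\ge0})$ keeps a fixed combinatorial type with an explicitly computable non-edge circuit and then still carry out the parallelism analysis --- which is essentially what the paper's one-parameter family accomplishes in closed form.
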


In summary, \cref{thm:bijection,thm:edge,thm:counterex-nondegenerate} show that, whenever a polyhedron $P$ inherits all of its circuits from another polyhedron $Q$ (with a non-degenerate vertex) under some affine projection $\pi$, this is not a property of any single one of the three `ingredients' $P$, $Q$, and $\pi$ -- unless inheritance is immediate because $\pi$ defines an affine isomorphism between $P$ and $Q$ or because $P$ has no circuits that are not edge directions. This means that the inheritance of circuits, beyond these simple cases, can only be a property of specific \emph{combinations} of the three ingredients. We conclude \cref{sec:characterization} with a family of examples of such nontrivial combinations that guarantee inheritance, and provide some final remarks in \cref{sec:finalremarks}.

\section{Preliminaries} \label{sec:prelim}

We first note that every polyhedron $P$ has some circuits that are naturally inherited from any extension: the edge directions of $P$. This is a well-known fact about projections of polyhedra. As a service to the reader, we include a brief proof.
\begin{lemma} \label{prop:edge}
Let $P \subset \R^n, Q \subset \R^m$ be polyhedra and let $\pi \colon \R^m \rightarrow \R^n$ be a linear map with $\pi(Q) = P$.
For every edge direction $g$ of $P$, there exists an edge direction $f$ of $Q$ such that $\pi(f)=g$.
\end{lemma}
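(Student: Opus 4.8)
The plan is to handle the two types of edge directions of $P$ separately, namely directions of extreme rays and differences of adjacent vertices, and in each case exhibit a suitable face of $Q$ whose own edge direction maps to the given one. The key observation throughout is that the preimage $\pi^{-1}(F)$ of a face $F$ of $P$ is a face of $Q$: indeed, if $F = P \cap \{x : c^\top x = \gamma\}$ is a supporting hyperplane representation of $F$, then $\pi^{-1}(F) = Q \cap \{y : (\pi^\top c)^\top y = \gamma\}$ (using that $\pi$ is linear, so $c^\top \pi(y) = (\pi^\top c)^\top y$), and the linear functional $\pi^\top c$ is maximized over $Q$ exactly on this set. Since $\pi(Q) = P$, the face $\pi^{-1}(F)$ is nonempty whenever $F$ is, and $\pi$ maps it onto $F$.

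First I would treat an extreme ray direction $g$ of $P$. Let $R$ be the corresponding edge (a one-dimensional face, a ray) of $P$, and consider the face $G := \pi^{-1}(R)$ of $Q$. It is nonempty and projects onto $R$, so $G$ is unbounded; pick any extreme ray of $G$ with direction $f$. Then $\pi(f)$ lies in the recession cone of $R$, which is spanned by $g$; and $\pi(f) \ne \zero$ is needed. The direction $f$ is an edge direction of $Q$ because an extreme ray of a face of $Q$ is an extreme ray of $Q$ itself. The one subtlety here is ensuring $\pi(f) \ne \zero$: if every extreme ray of $G$ mapped to $\zero$, and $G$ has no lineality, then $G$ would be bounded in directions complementary to $\ker \pi$, but $G$ must contain points projecting arbitrarily far along $R$ — so some extreme ray (or, in the non-pointed case, some element of the recession cone, which can then be decomposed) must have nonzero image. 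After rescaling, $\pi(f) = g$.

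Next I would treat the case $g = u - v$ for adjacent vertices $u, v \in \V(P)$, so that $E := \conv\{u,v\}$ is an edge of $P$. Again set $G := \pi^{-1}(E)$, a nonempty face of $Q$ with $\pi(G) = E$. Now $G$ is a polyhedron that surjects linearly onto the segment $E$; hence $G$ has at least two vertices $u', v'$ with $\pi(u') = u$ and $\pi(v') = v$ (choose vertices of $G$ in the nonempty faces $\pi^{-1}(u) \cap G$ and $\pi^{-1}(v) \cap G$, which are the two facets of $G$ exposed by $\pm \pi^\top c$ for the functional $c$ separating $u$ from $v$ within $\aff E$). Consider a path of edges of $G$ from $u'$ to $v'$ (which exists as $G$, being a face of a polyhedron, is connected along its 1-skeleton when pointed; in the non-pointed case one passes to $G$ modulo its lineality space, whose edge directions still project correctly). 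The images of the edge directions along this path are vectors in the line $\aff E - v$ summing to $u - v = g$, so at least one of them is a positive multiple of $g$. That edge direction of $G$ is an edge direction of $Q$ because an edge of a face of $Q$ is an edge of $Q$; rescaling gives the desired $f$ with $\pi(f) = g$.

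The main obstacle I anticipate is the careful bookkeeping around non-pointed $Q$ and the vanishing of projected directions: one must make sure that when an edge or extreme ray of the auxiliary face $G$ projects to $\zero$, this direction lies in $\ker \pi$ and can be discarded, while guaranteeing that \emph{not all} relevant directions are killed — which follows because $\pi$ restricted to $G$ surjects onto a genuinely one-dimensional set ($R$ or $E$). All remaining steps are standard facts about faces of polyhedra and linear images thereof. Since the paper's setting ultimately restricts attention to pointed polyhedra, the cleanest writeup would first reduce to the pointed case (quotienting $Q$ by its lineality space, which does not change which directions are edge directions up to the lineality space, and noting $\pi$ still maps the quotient onto $P$) and then run the two arguments above without the parenthetical caveats.
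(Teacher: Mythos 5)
Your proof is correct and follows essentially the same route as the paper: pass to a face of $Q$ projecting onto the edge (you construct it explicitly as $\pi^{-1}(F)\cap Q$, while the paper just cites that every face of $P$ is the image of a face of $Q$), lift the endpoints to vertices, and observe that along an edge walk between the lifts at least one edge direction must project to a nonzero multiple of $g$. Your writeup is in fact slightly more complete, since you also treat the extreme-ray case and the non-pointed situation explicitly, which the paper's proof leaves implicit.
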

\begin{proof}
Any face of $P$ is the image of a face of $Q$ under $\pi$; see, e.g., \cite{fkpt-13}. Let $e$ be an edge of $P$ with vertices $v$ and $w$, and let $F$ be a face of $Q$ such that $\pi(F)=e$. Note that $\dim(F) \ge \dim(e) = 1$. Hence, there exist two vertices $v'$ and $w'$ of $F$ with $\pi(v') = v$ and $\pi(w') = w$, and an edge walk connecting $v'$ and $w'$ in $F$. Since $\pi(v'-w') = v-w$, at least one of the edge directions in this walk projects to a nonzero multiple of $v-w$. \qed
\end{proof}

\Cref{prop:edge} shows that, if all circuits of $P$ are edge directions, then $\C(P) \subset \pi(\C(Q))$ for any extension of $P$ specified by $Q$ and $\pi$. This special case includes hypercubes and simplices, and more complicated polytopes such as Birkhoff polytopes and fractional matching polytopes \cite{dks-22,san-18}.

In general, the set of circuits of a polyhedron may of course be much larger than the set of its edge directions. However, we can make a simple a priori observation: two polyhedra that are affinely isomorphic trivially are extensions of one another. Recall that two polyhedra $P \subset \R^p$ and $Q \subset \R^q$ are \emph{affinely (linearly) isomorphic} if there exists an affine (linear) map $\pi \colon \R^q \rightarrow \R^p$ such that $\pi(Q) = P$ and, for all $x \in P$, there exists a unique $y \in Q$ with $\pi(y)=x$. The sets of circuits of affinely isomorphic polyhedra are isomorphic, too, as the next lemma states. 
Since translations are special affine isomorphisms, this justifies our assumption made in \cref{sec:definitions} that all projection maps are \emph{linear} maps.

\begin{lemma} \label{prop:bijection}
Let $Q \subset \R^m$ be a polyhedron and let $\pi \colon \R^m \rightarrow \R^n$ be an affine map.
If $Q$ and $\pi(Q)$ are affinely isomorphic, then $\C(\pi(Q)) = \pi(\C(Q)) - \pi(\zero)$.
\end{lemma}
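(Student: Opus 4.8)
The plan is to reduce everything to the affine-isomorphism hypothesis, which essentially says that $\pi$ restricted to the affine hull of $Q$ is a bijection onto the affine hull of $P := \pi(Q)$. Write $\ell(x) := \pi(x) - \pi(\zero)$ for the linearized map; I want to show $\ell$ induces a bijection between $\C(Q)$ and $\C(P)$. The key structural fact I would extract first is that the affine isomorphism forces $\ker(\ell) \cap \ls(Q) = \{\zero\}$, where $\ls(Q)$ denotes the linear span of the direction space of $Q$ (the lineality-plus-edge-directions, i.e.\ the linear subspace parallel to $\aff(Q)$); indeed if some nonzero $h$ in that span were killed by $\ell$, then a point $y \in Q$ and $y + \varepsilon h$ (for suitable small $\varepsilon$, using that $h$ lies in the direction space of $\aff(Q)$ and $Q$ may need to be perturbed along $h$ — more carefully, one uses that the affine map is injective on $\aff(Q)$, which is equivalent to $\ker(\ell)$ meeting the linear part of $\aff(Q)$ only in $\zero$) would map to the same point, contradicting uniqueness of preimages. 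So $\ell$ is injective on the linear subspace $L$ parallel to $\aff(Q)$.

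Next I would recall the geometric description of circuits given in \cref{sec:definitions}: with an irredundant description, the circuits of $Q$ are exactly the directions obtained as one-dimensional intersections of translates of facets of $Q$ — equivalently, the potential edge directions of $Q$ as the right-hand side varies. Every such direction lies in $L$. Under an affine isomorphism, the facets of $Q$ correspond bijectively to the facets of $P$ (a facet-defining inequality $b^\top y \le \delta$ of $Q$ transports to the facet-defining inequality for $P$ obtained by pushing it through the inverse of $\pi$ on $\aff(Q)$, and conversely), and more generally the whole face lattice is carried over isomorphically by $\pi$. Hence, for any choice of right-hand side, the polyhedron $Q' = \{y : Ay = b', By \le d'\}$ maps under $\pi$ to the analogous polyhedron $P'$ in $\R^n$, again affinely isomorphically, so edge directions of $Q'$ map (via $\ell$, injectively) onto edge directions of $P'$. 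Letting the right-hand sides range over all possibilities and invoking the ``potential edge direction'' characterization of circuits on both sides then gives $\ell(\C(Q)) = \C(P)$ with $\ell$ injective on the relevant subspace; rearranging $\ell(x) = \pi(x) - \pi(\zero)$ yields exactly $\C(\pi(Q)) = \pi(\C(Q)) - \pi(\zero)$.

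Alternatively — and this may be cleaner to write — I would argue directly from the algebraic definition. Fix irredundant descriptions $Q = \{y : A'y = b', B'y \le d'\}$ and $P = \{x : Ax = b, Bx \le d\}$. Because $\pi|_{\aff(Q)}$ is an affine bijection onto $\aff(P)$, there is a matrix $M$ and vector $t$ with $\pi(y) = My + t$, and one can arrange (after absorbing the equality systems) that $\ker(A') = M^{-1}$-image-correspondence holds: precisely, $\ker(A) = \ell(\ker(A'))$ and $\ell$ is injective on $\ker(A')$, and moreover $B'g$ and $B(\ell(g))$ have the same support for every $g \in \ker(A')$ because the facet inequalities correspond one-to-one and scaling a single inequality does not change supports. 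Support-minimality in $\ker(A') \setminus \{\zero\}$ is then transported verbatim to support-minimality in $\ker(A) \setminus \{\zero\}$ under the bijection $g \mapsto \ell(g)$, giving the claimed equality of circuit sets.

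The main obstacle is the bookkeeping around equality constraints and the precise correspondence of facet inequalities: I must be careful that ``affinely isomorphic'' genuinely lets me match up an irredundant $By \le d$ description of $Q$ with an irredundant $Bx \le d$ description of $P$ so that supports of $B'g$ and $B\ell(g)$ agree. This requires knowing that under an affine isomorphism facets go to facets (true, since the whole face lattice is preserved) and that one may choose the descriptions compatibly. Once that correspondence is nailed down, the support-minimality transfer is immediate and the rest is routine. I expect the cleanest writeup to state the facet-correspondence as a short sublemma (or cite the standard fact that affine isomorphisms preserve the face lattice), then do the two-line support-minimality argument.
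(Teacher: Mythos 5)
Your proposal is correct and follows essentially the same route as the paper's (very terse) proof: both rely on the affine isomorphism of the affine hulls, the one-to-one correspondence of facets, and the geometric interpretation of circuits as potential edge directions under varying right-hand sides. Your write-up simply fills in the injectivity and support-correspondence details that the paper leaves implicit.
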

\begin{proof}
Let $\pi$ be an affine isomorphism between $P := \pi(Q)$ and $Q$. It is easy to see that the affine hulls $\aff(P)$ and $\aff(Q)$ are affinely isomorphic as well, and that there is a one-to-one correspondence between the facets of $P$ and the facets of $Q$. Using the geometric interpretation of the set of circuits (see \cref{sec:definitions}), we obtain the statement. \qed
\end{proof}

Another setting that is easy to resolve is when the polyhedra are low-dimensional. Consider a polyhedron $P$ and an extension $Q$ of $P$ with $\dim(Q) \le 3$. Then either $\dim(P) = 3$ and thus $\dim(Q) = 3$, in which case $P$ and $Q$ must be affinely isomorphic, 
or $\dim(P) \le 2$. In the latter case, every facet of $P$ (if one exists) is an edge of $P$. Hence, every circuit of $P$ trivially is an edge direction.
In summary, we obtain the following corollary to \cref{prop:edge,prop:bijection}.
\begin{corollary}\label{cor:dimension}
Let $P \subset \R^n, Q \subset \R^m$ be polyhedra and let $\pi \colon \R^m \rightarrow \R^n$ be a linear map such that $\pi(Q) = P$.
If $\dim(P) \le 2$ or $\dim(Q) \le 3$, then $\C(P) \subset \pi(\C(Q))$.
\end{corollary}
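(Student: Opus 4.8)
The plan is to treat the two hypotheses separately and reduce each one to a lemma just proved. Throughout I would use that $\pi$ is linear, so $\pi(\zero)=\zero$ and $\dim(P)=\dim(\pi(Q))\le\dim(Q)$, and that $\C(Q)$ contains every edge direction of $Q$.

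\emph{Case $\dim(P)\le 2$.} Here I would first argue that $\C(P)$ consists precisely of the edge directions of $P$. This is where the irredundancy of the assumed description matters: every inequality of $P$ is facet-defining, and a facet of $P$ has dimension $\dim(P)-1\le 1$. Using the geometric description of the circuits from \cref{sec:definitions} — $\C(P)$ is the set of edge directions of the polyhedra obtained from $P$ by varying the right-hand sides — one checks that these perturbed polyhedra are again of dimension $\le 2$, so each of their edges is parallel to a facet, hence to an edge, of $P$. Thus $\C(P)$ equals the set of edge directions of $P$, and \cref{prop:edge} immediately gives $\C(P)\subseteq\pi(\C(Q))$. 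Note that in this case no assumption on $\dim(Q)$ is used.

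\emph{Case $\dim(Q)\le 3$.} Since $\dim(P)\le\dim(Q)\le 3$, either $\dim(P)\le 2$, which is the previous case, or $\dim(P)=3$, which then forces $\dim(Q)=3$. In the latter situation the linear map $\pi$ sends the $3$-dimensional direction space of $\aff(Q)$ onto the $3$-dimensional direction space of $\aff(P)$; a surjective linear map between spaces of the same finite dimension is bijective, so $\pi$ restricts to an affine isomorphism of $\aff(Q)$ onto $\aff(P)$. Consequently each point of $P$ has a unique preimage in $Q$, i.e., $P$ and $Q$ are affinely isomorphic, and \cref{prop:bijection} yields $\C(P)=\pi(\C(Q))-\pi(\zero)=\pi(\C(Q))$.

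I do not expect a real obstacle. The two mild points to get right are: (i) that a polyhedron of dimension $\le 2$ with an irredundant description has only edge directions as circuits — this genuinely relies on the minimality convention, since adding redundant inequalities would create spurious circuits; and (ii) that a linear image of a polyhedron has dimension at most that of the polyhedron, with equality forcing an affine isomorphism on the affine hulls. The substance of the corollary lies entirely in \cref{prop:edge,prop:bijection}.
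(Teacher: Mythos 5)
Your proof is correct and follows essentially the same route as the paper: for $\dim(P)\le 2$ the facets of $P$ are at most one-dimensional, so the geometric interpretation of circuits shows every circuit is an edge direction and \cref{prop:edge} applies, while for $\dim(Q)\le 3$ the only remaining case $\dim(P)=\dim(Q)=3$ forces an affine isomorphism and \cref{prop:bijection} applies. Your added detail that a surjective linear map between the three-dimensional direction spaces of the affine hulls is bijective is exactly the reason the paper asserts the isomorphism.
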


\Cref{prop:bijection} will be one of the key ingredients for proving \cref{thm:bijection} (see \cref{sec:inheritance-proj}).
Further, the lemma has several interesting implications for certain types of extended formulations and projections: one of the simplifying assumptions commonly made in the study of extended formulations is that the projection is an orthogonal projection onto a subspace of the variables. In our context, such a projection is just as general as any other linear projection.

\begin{corollary} \label{cor:orth-proj}
Let $P \subset \R^n, Q \subset \R^m$ be polyhedra and let $\pi \colon \R^m \rightarrow \R^n$ be a linear map such that $\pi(Q) = P$. 
Further let $Q' := \{ (x,y) \in \R^n \times \R^m \colon x = \pi(y), y \in Q \}$ and $\pi' \colon \R^n \times \R^m \rightarrow \R^n$ defined by $(x,y) \mapsto x$. Then $\pi'(Q') = P$ and $\pi(\C(Q)) = \pi'(\C(Q'))$.
\end{corollary}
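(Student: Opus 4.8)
The plan is to apply \cref{prop:bijection} to the polyhedron $Q'$ together with a suitable affine map, after first establishing that $Q$ and $Q'$ are linearly isomorphic via the map $\sigma \colon y \mapsto (\pi(y), y)$.

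First I would verify the easy claim that $\pi'(Q') = P$: by definition, $(x,y) \in Q'$ forces $x = \pi(y)$ with $y \in Q$, so $\pi'(Q') = \{\pi(y) : y \in Q\} = \pi(Q) = P$. Next, I would show $Q$ and $Q'$ are linearly isomorphic. Consider the linear map $\sigma \colon \R^m \rightarrow \R^n \times \R^m$, $y \mapsto (\pi(y), y)$; clearly $\sigma(Q) = Q'$, and $\sigma$ is injective (its second block is the identity), so for every $(x,y) \in Q'$ there is a unique preimage, namely $y$ itself. Hence $Q'$ and $Q$ are linearly isomorphic. By \cref{prop:bijection} applied to $\sigma$ (with $\pi(\zero) = \zero$ since $\sigma$ is linear), we get $\C(Q') = \sigma(\C(Q))$, i.e.\ $\C(Q') = \{(\pi(g), g) : g \in \C(Q)\}$.

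Finally, I would compute $\pi'(\C(Q'))$ directly: $\pi'(\C(Q')) = \{\pi'(\pi(g), g) : g \in \C(Q)\} = \{\pi(g) : g \in \C(Q)\} = \pi(\C(Q))$, which is exactly the desired identity. This also reconfirms $\pi'(Q') = \pi(Q) = P$ consistently.

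I do not expect a genuine obstacle here; the statement is essentially a bookkeeping consequence of \cref{prop:bijection}. The only point requiring a modicum of care is to confirm that $Q'$ admits (or is assumed to have) a description to which the geometric interpretation of circuits underlying \cref{prop:bijection} applies — but since $Q'$ is carved out of $Q$ by adding the equalities $x = \pi(y)$, which do not cut the polyhedron $\{y \in Q\} \times$ (nothing) but merely embed it, irredundancy of the inequality part of the description of $Q$ carries over verbatim to $Q'$, and the equality block only enlarges $A$, which does not affect $\ker$ of the relevant projected system. Thus \cref{prop:bijection} applies cleanly and the corollary follows. \qed
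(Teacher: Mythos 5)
Your proposal is correct and follows the paper's argument exactly: the paper also observes that $\tau \colon y \mapsto (\pi(y),y)$ is a linear isomorphism between $Q$ and $Q'$ and then invokes \cref{prop:bijection}. Your additional verification that $\pi'(Q')=P$ and the explicit computation of $\pi'(\C(Q'))$ are just the bookkeeping the paper leaves implicit.
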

\begin{proof}
The claim follows immediately from \cref{prop:bijection}, since the map $\tau \colon y \mapsto (\pi(y),y)$ defines a linear isomorphism between $Q$ and $Q' = \tau(Q)$. \qed
\end{proof}

Another consequence of \cref{prop:bijection} that may be of independent interest is that every pointed polyhedron is affinely isomorphic to a polyhedron in standard form whose set of circuits is isomorphic to the set of circuits of the original polyhedron. 
Recall that a polyhedron $P$ is in \emph{standard form} if it is given by a linear system of the form $Ax = b, x \ge \zero$.

\begin{corollary} \label{cor:slack}
Let $P = \{ x \in \R^n \colon Ax = b, Bx \le d \}$ be a pointed polyhedron where $B \in \R^{m \times n}$. Define the affine map $\sigma \colon \R^n \rightarrow \R^m, x \mapsto d-Bx$. Then $\sigma(P)$ is a polyhedron with a standard form description such that $\C(\sigma(P)) = B \cdot \C(P) = \{ Bg \colon g \in \C(P) \}$. In other words, $\C(\sigma(P))$ is the set of support-minimal vectors in $B \cdot \ker(A)$.
\end{corollary}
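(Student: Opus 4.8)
The plan is to derive this from \cref{prop:bijection}, applied with $Q := P$ and $\pi := \sigma$. First I would check that $P$ and its image $\sigma(P)$ are affinely isomorphic. The solution set $S := \{ x \in \R^n \colon Ax = b \}$ contains $P$ and, assuming $P \ne \emptyset$, equals $x_0 + \ker(A)$ for any fixed $x_0 \in P$. Since the linear part of $\sigma$ is $x \mapsto -Bx$ with kernel $\ker(B)$, the restriction $\sigma|_S$ is injective exactly when $\ker(A) \cap \ker(B) = \{ \zero \}$, which is precisely the pointedness of $P$. An affine left inverse of $\sigma|_S$, extended arbitrarily to all of $\R^m$, then witnesses an affine isomorphism between $P$ and $\sigma(P)$, so \cref{prop:bijection} applies.

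Given this, \cref{prop:bijection} yields $\C(\sigma(P)) = \sigma(\C(P)) - \sigma(\zero)$. Now $\sigma(\zero) = d$ and $\sigma(g) - \sigma(\zero) = (d - Bg) - d = -Bg$, and $\C(P)$ is symmetric under negation (being a union of one-dimensional subspaces), so this collapses to
\[ \C(\sigma(P)) = \{ -Bg \colon g \in \C(P) \} = B \cdot \C(P). \]
It remains to pin down a standard form description of $\sigma(P)$ and to check the final reformulation. Writing $x = x_0 + h$ with $h \in \ker(A)$, one has $Bx \le d$ iff $\sigma(x) = (d - Bx_0) - Bh \ge \zero$, so with $z_0 := d - Bx_0$ we get $\sigma(P) = \bigl( z_0 + B \cdot \ker(A) \bigr) \cap \{ z \ge \zero \}$. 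Choosing any matrix $A'$ with $\ker(A') = B \cdot \ker(A)$ and setting $b' := A'z_0$ rewrites this as $\sigma(P) = \{ z \in \R^m \colon A'z = b',\ z \ge \zero \}$, a standard form description. Straight from the definition, the circuits of this description are the nonzero vectors of $\ker(A') = B \cdot \ker(A)$ with inclusion-minimal support in that set; this matches the formula above, and it also re-proves $B \cdot \C(P) = \{ Bg \colon g \in \C(P) \}$ directly, since pointedness makes $g \mapsto Bg$ a bijection from $\ker(A)$ onto $B \cdot \ker(A)$ carrying $\C(P)$ exactly onto those support-minimal vectors.

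None of this is deep. The two spots that deserve care are that pointedness is exactly the condition making $\sigma$ injective on $S$ (so that \cref{prop:bijection} is legitimately applicable), and that the $z \ge \zero$ block of the standard form description need not be irredundant -- but that is harmless here, because the circuit computation is phrased entirely in terms of $\ker(A')$ and the identity matrix. I would also note in passing that the whole statement can be obtained directly from the definition of circuits together with the bijection $g \mapsto Bg$, without invoking \cref{prop:bijection}; the route above is chosen to make the dependence on \cref{prop:bijection} transparent, in line with the surrounding discussion.
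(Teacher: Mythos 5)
Your proposal is correct and follows essentially the same route as the paper's proof: establish that pointedness makes $\sigma$ injective (hence an affine isomorphism onto $\sigma(P)$), exhibit the standard form description of $\sigma(P)$, and invoke \cref{prop:bijection} to transport the circuits. You are somewhat more explicit than the paper about the sign cancellation $\sigma(g)-\sigma(\zero)=-Bg$ and about choosing $A'$ with $\ker(A')=B\cdot\ker(A)$, but these are presentation details rather than a different argument.
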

\begin{proof}
Let $x,y \in P$ such that $\sigma(x) = \sigma(y)$. Then $A(x-y) = \zero$ and $B(x-y) = \zero$. Since $P$ is pointed, it follows that $x=y$. Hence, $\sigma$ is an isomorphism between $P$ and $\sigma(P)$. Note that $\aff(\sigma(P)) = \sigma(\aff(P))$. We further claim that $\sigma(P) = \aff(\sigma(P)) \cap \R^m_{\ge 0}$ (see also \cite{fkpt-13}). Clearly, $\sigma(P) \subset \aff(\sigma(P)) \cap \R^m_{\ge 0}$. To see that the converse inclusion also holds, let $s \in \aff(\sigma(P)) \cap \R^m_{\ge 0}$, i.e., $s = \sigma(z) \ge \zero$ for some $z \in \aff(P)$. In particular, we have that $Az = b$ and $Bz \le d$, which implies that $z \in P$ as claimed. 
Thus, the description of $\sigma(P)$ as $\aff(\sigma(P)) \cap \R^m_{\ge 0}$ is in standard form. By applying \cref{prop:bijection}, we obtain $\C(\sigma(P)) = \sigma(\C(P)) - d = B \cdot \C(P)$. \qed
\end{proof}

We point out that \cref{cor:slack} contrasts with the behavior of circuits under the standard conversion of a polyhedron $P = \{ x \in \R^n \colon Ax = b, Bx \le d \}$ to standard form: in addition to introducing slack variables $s \ge \zero$ to obtain $Bx+s=d$, one splits each variable $x$ into a positive and a negative part $x=x^+-x^-$, both of which are constrained to be nonnegative. It is shown in \cite{bv-22} that this conversion may introduce exponentially many new circuits. \Cref{cor:slack} suggests that this behavior is a consequence of splitting the variables and not of introducing slack variables, which is what applying the slack map $\sigma$ defined in \cref{cor:slack} implicitly does as well.
More precisely, $\sigma(P)$ is the projection of $P' := \{ (x,s) \in \R^n \times \R^m \colon Ax = b, Bx + s = d, s \ge \zero \}$ onto the slack variables $s$. By \cref{cor:orth-proj}, $P'$ and $P$ (and, hence, $P'$ and $\sigma(P)$) are affinely isomorphic. Characterizing $\sigma(P)$ via $P'$ adds the benefit that one can derive an explicit standard form representation of $\sigma(P)$ from the description of $P'$, using a projection technique found, e.g., in \cite[Theorem 3.46]{ccz-14}:
for a basis $\{ (u^{(1)},v^{(1)}),\dots,(u^{(l)},v^{(l)}) \}$ of $\ker( \begin{pmatrix} B^\top & A^\top \end{pmatrix})$, we have that $\sigma(P) = \{ s \in \R^m \colon s \ge \zero, (u^{(i)})^\top s = (u^{(i)})^\top d \;\forall i \in [l] \}$.

We conclude these preliminaries with a final simple tool that will be useful in the next sections. 
Every circuit of the Cartesian product $P_1 \times P_2$ of polyhedra $P_1$ and $P_2$ is a circuit of one of the product terms, suitably padded with zeros. This was shown in \cite[Lemma 3.9]{bsy-18} for polyhedra in canonical form. We restate and prove the result in all generality here.

\begin{proposition}[\hspace*{-0.15cm}\cite{bsy-18}] \label{lem:cartesian}
Let $P_1 \subset \R^{n_1}, P_2 \subset \R^{n_2}$ be pointed polyhedra. Then
$\C(P_1 \times P_2) = (\C(P_1) \times \{ \zero \}) \cup (\{ \zero \} \times \C(P_2))$.
\end{proposition}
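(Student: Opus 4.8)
The plan is to reduce to the geometric definition of circuits from \cref{sec:definitions}: $\C(P)$ is the set of all vectors that arise as edge directions of some polyhedron obtained from $P$ by translating its facets. For a Cartesian product $P_1 \times P_2$, we should first observe that a minimal (irredundant) description of $P_1 \times P_2$ is obtained by stacking a minimal description of $P_1$ (in the first $n_1$ coordinates, with zeros in the last $n_2$) together with a minimal description of $P_2$ (in the last $n_2$ coordinates, with zeros in the first $n_1$). Writing the equality system as $A = \begin{pmatrix} A_1 & 0 \\ 0 & A_2 \end{pmatrix}$ and the inequality system as $B = \begin{pmatrix} B_1 & 0 \\ 0 & B_2 \end{pmatrix}$, we have $\ker(A) = \ker(A_1) \times \ker(A_2)$, and for $g = (g_1, g_2) \in \ker(A)$ the product structure gives $Bg = (B_1 g_1, B_2 g_2)$, so the support of $Bg$ is the disjoint union of the support of $B_1 g_1$ and the support of $B_2 g_2$.

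With this, the two inclusions are straightforward. For ``$\supseteq$'': if $g_1 \in \C(P_1)$, then $(g_1,\zero) \in \ker(A)\setminus\{\zero\}$, and $B(g_1,\zero) = (B_1 g_1, \zero)$ has support equal to $\mathrm{supp}(B_1 g_1)$; any $(g_1',g_2') \in \ker(A)\setminus\{\zero\}$ with $\mathrm{supp}(B(g_1',g_2')) \subseteq \mathrm{supp}(B(g_1,\zero))$ must have $B_2 g_2' = \zero$, hence $g_2' = \zero$ since $P_2$ is pointed (so $\ker(A_2)\cap\ker(B_2)=\{\zero\}$), and then $g_1' \ne \zero$ with $\mathrm{supp}(B_1 g_1') \subseteq \mathrm{supp}(B_1 g_1)$ forces $\mathrm{supp}(B_1 g_1') = \mathrm{supp}(B_1 g_1)$ by minimality of $g_1$ in $\C(P_1)$; hence $(g_1,\zero)$ is support-minimal, i.e.\ a circuit of the product. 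The symmetric argument handles $\{\zero\}\times\C(P_2)$. For ``$\subseteq$'': let $g = (g_1,g_2) \in \C(P_1\times P_2)$. Since $g \ne \zero$, assume WLOG $g_1 \ne \zero$; then $(g_1, \zero) \in \ker(A)\setminus\{\zero\}$ and $\mathrm{supp}(B(g_1,\zero)) = \mathrm{supp}(B_1 g_1) \subseteq \mathrm{supp}(B_1 g_1) \cup \mathrm{supp}(B_2 g_2) = \mathrm{supp}(Bg)$; by support-minimality of $g$ we get equality, which forces $\mathrm{supp}(B_2 g_2) = \emptyset$, i.e.\ $B_2 g_2 = \zero$, and with $A_2 g_2 = \zero$ and $P_2$ pointed we conclude $g_2 = \zero$. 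Thus $g = (g_1,\zero)$, and the support-minimality of $g$ in the product restricted to the first block shows $g_1 \in \C(P_1)$.

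The only mildly delicate point — and the step I would be most careful about — is the claim that stacking minimal descriptions of $P_1$ and $P_2$ yields a minimal (facet-defining, irredundant) description of $P_1 \times P_2$; this is where the full generality beyond ``canonical form'' in \cite{bsy-18} has to be argued. It follows from the fact that the facets of $P_1 \times P_2$ are exactly $F_1 \times P_2$ for facets $F_1$ of $P_1$ and $P_1 \times F_2$ for facets $F_2$ of $P_2$ (using $\dim(P_1\times P_2) = \dim P_1 + \dim P_2$), so no inequality in the stacked system is redundant, and $\ker(A)$ is unaffected by the choice of equality subsystem as noted in \cref{sec:definitions}. Once the description is settled, everything else is the elementary support bookkeeping above; pointedness of both factors is used precisely to rule out a nonzero kernel vector with empty support under $B_i$. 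I would also remark that pointedness of $P_1 \times P_2$ is equivalent to pointedness of both factors, so the hypothesis is exactly what is needed and the statement is vacuous-free.
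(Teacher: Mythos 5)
Your proposal is correct and follows essentially the same route as the paper: both work directly from the support-minimality definition, exploit the block structure of the product description so that $\ker(A)=\ker(A_1)\times\ker(A_2)$ and the support of $Bg$ splits over the two blocks, and use pointedness of the second factor to force $g_2=\zero$. Your extra remark verifying that stacking irredundant descriptions yields an irredundant description of the product is a legitimate care point that the paper's proof leaves implicit, but it does not constitute a different approach.
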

\begin{proof}
Let $P_i = \{ x \in \R^{n_i} \colon A^{(i)} x = b^{(i)}, B^{(i)} x \le d^{(i)} \}$ for $i \in \{1,2\}$. Then $\C(P_1 \times P_2)$ consists precisely of those nonzero vectors $(g^{(1)}, g^{(2)}) \in \ker(A^{(1)}) \times \ker(A^{(2)})$ for which the support of $(B^{(1)} g^{(1)}, B^{(2)} g^{(2)})$ is inclusion-minimal.
Let $(g^{(1)}, g^{(2)}) \in \R^{n_1} \times \R^{n_2}$ be a nonzero vector in $\ker(A^{(1)}) \times \ker(A^{(2)})$. W.l.o.g., we may assume that $g^{(1)} \ne \zero$. Then $(g^{(1)},\zero) \in \ker(A^{(1)}) \times \ker(A^{(2)})$ and the support of $(B^{(1)} g^{(1)}, \zero)$ is contained in the support of $(B^{(1)} g^{(1)}, B^{(2)} g^{(2)})$. Since $P_2$ is pointed, it follows that $(g^{(1)}, g^{(2)}) \in \C(P_1 \times P_2)$ if and only if $g^{(2)} = \zero$ and $g^{(1)} \in \C(P_1)$. \qed
\end{proof}

\section{Counterexamples for the Inheritance of Circuits} \label{sec:counterexamples}

In this section, we prove that, in general, circuits of polyhedra are not inherited from extended formulations.  This contrasts with the behavior for edge directions stated in \cref{prop:edge}. 
We begin by constructing a family of provably minimal counterexamples.
 
\subsection{A Family of Minimal Counterexamples} \label{sec:counterexamples-min}

The essential building block for our constructions is a carefully chosen family of linear projections $\pi_{n,m} \colon \R^m \rightarrow \R^n$ for all $m,n \in \N$ with $m > n \ge 3$. We define a matrix
\[
    \arraycolsep=2.5pt
    \Pi_{n,m} = \left( \begin{array}{cccc|ccc|}
    2 & 1 & 0 & 0 &&&\\
    0 & 0 & 2 & 1 & \zero &&\\
    0 & 1 & 0 & 1 &&&\\
    \hline
    &&& &&&\\
    & \zero && & 2\, \mathbf{I}_{n-3} &&\\
    &&& &&&\\
    \end{array}
    \begin{array}{ccc}
    & \zero &
    \end{array}\right)
    \in \R^{n \times m},
\]
where $\mathbf{I}_d$ denotes the $d \times d$ identity matrix whose rows are the standard unit vectors $\unit{i}$. Let $\pi_{n,m}$ be the map defined by $x \mapsto \Pi_{n,m} x$. This map allows us to state our first family of (unbounded) counterexamples, which all are projections of the nonnegative orthant, a simplicial cone: 

\begin{lemma} \label{lem:counterex-pi-orthant}
Let $m > n \ge 3$ and $\pi := \pi_{n,m}$. Then $\pi(\R^m_{\ge 0})$ is a full-dimensional pointed polyhedral cone with $n+1$ facets and $n+1$ extreme rays. Further, $\C(\pi(\R^m_{\ge 0})) \not\subset \pi(\C(\R^m_{\ge 0}))$ where $\pi(\C(\R^m_{\ge 0}))$ is equal to the set of edge directions of $\pi(\R^m_{\ge 0})$.
\end{lemma}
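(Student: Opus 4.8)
The plan is to make the cone $\pi(\R^m_{\ge 0})$ fully explicit, recognize it as a Cartesian product of a small three-dimensional cone with a nonnegative orthant, and then compare the two circuit sets via \cref{lem:cartesian}.

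\emph{Step 1: identifying the cone.} The cone $\pi(\R^m_{\ge 0})$ is the conical hull of the columns of $\Pi_{n,m}$. Reading these off, the first four columns are $2\unit{1}$, $\unit{1}+\unit{3}$, $2\unit{2}$, $\unit{2}+\unit{3}$, the columns $5,\dots,n+1$ are $2\unit{4},\dots,2\unit{n}$, and the last $m-n-1$ columns vanish. Rescaling the generators and identifying $\R^n \cong \R^3\times\R^{n-3}$ (first three coordinates versus the rest), this gives
\[
  \pi(\R^m_{\ge 0}) \;=\; K \times \R^{n-3}_{\ge 0}, \qquad K := \mathrm{cone}\{\unit{1},\ \unit{1}+\unit{3},\ \unit{2},\ \unit{2}+\unit{3}\} \subset \R^3 .
\]
I would then show $K = \{x\in\R^3 : x_1,x_2,x_3\ge 0,\ x_1+x_2\ge x_3\}$: the inclusion "$\subseteq$" follows by checking the four generators, and "$\supseteq$" by producing explicit nonnegative coefficients (writing $x = a\unit{1}+b(\unit{1}+\unit{3})+c\unit{2}+d(\unit{2}+\unit{3})$ forces $b+d=x_3$, $a=x_1-b$, $c=x_2-d$, and the constraints $x_i\ge 0$, $x_3\le x_1+x_2$ allow a valid choice of $b,d\in[0,x_3]$). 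A short check shows this description is irredundant -- each inequality is tight on the $2$-dimensional face spanned by two of the generators -- and that the four generators are precisely the extreme rays of $K$ (each lies on exactly two facets). Hence $K$ is a $3$-dimensional pointed cone over a square, with $4$ facets and $4$ extreme rays, and $K\subset\R^3_{\ge 0}$. Taking the product with $\R^{n-3}_{\ge 0}$ then yields that $\pi(\R^m_{\ge 0})$ is full-dimensional, pointed, and has $4+(n-3)=n+1$ facets and $n+1$ extreme rays.

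\emph{Step 2: comparing circuits.} Since $\R^m_{\ge 0}$ has the irredundant description $-x\le\zero$, its circuits are exactly the punctured coordinate axes $\R\unit{i}\setminus\{\zero\}$, so $\pi(\C(\R^m_{\ge 0}))$ consists precisely of the scalar multiples of the columns of $\Pi_{n,m}$ (together with $\zero$), i.e.\ of the lines through $\unit{1}$, $\unit{1}+\unit{3}$, $\unit{2}$, $\unit{2}+\unit{3}$, $\unit{4},\dots,\unit{n}$ -- which are exactly the $n+1$ extreme-ray directions, hence the edge directions, of the pointed cone $\pi(\R^m_{\ge 0})$; this proves the parenthetical claim. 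For the strict non-inclusion, by \cref{lem:cartesian} it is enough to find a circuit of $K$ that is not an extreme-ray direction of $K$. Using the geometric description of $\C(K)$ as the set of directions of one-dimensional intersections of translated facet hyperplanes of $K$, intersecting translates of the facet hyperplanes $\{x_1=0\}$ and $\{x_2=0\}$ produces a line in direction $\unit{3}$, so $\unit{3}\in\C(K)$. (Equivalently, one checks directly that $\unit{3}$ has support-minimal image under the constraint matrix of $K$: forcing any proper subset of the support of that image already forces $\unit{3}=\zero$.) But $\unit{3}$ is not a scalar multiple of any column of $\Pi_{n,m}$, so the third standard unit vector $(\unit{3},\zero)\in\R^3\times\R^{n-3}$ lies in $\C(K)\times\{\zero\}\subset\C(\pi(\R^m_{\ge 0}))$ but not in $\pi(\C(\R^m_{\ge 0}))$, giving $\C(\pi(\R^m_{\ge 0}))\not\subset\pi(\C(\R^m_{\ge 0}))$.

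\emph{Main obstacle.} The argument is largely bookkeeping; the only step needing genuine care is verifying that the four-inequality description of $K$ is irredundant and that its four generators are all of its extreme rays, since the facet and ray counts asserted in the lemma rest on this. Everything else -- extracting the columns of $\Pi_{n,m}$, the Cartesian-product decomposition, the computation of $\C(\R^m_{\ge 0})$, and the appeal to \cref{lem:cartesian} -- is routine.
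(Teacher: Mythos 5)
Your proof is correct, and it reaches the same counterexample as the paper (the cone $\{x \ge \zero,\ x_1+x_2-x_3\ge 0\}$ with the non-inherited circuit $\unit{3}$ arising from the facets $x_1=0$ and $x_2=0$), but the verification is organized differently. The paper establishes the facet/extreme-ray description by induction on $n$, with the base case $n=3$ checked directly, and then pins down the \emph{entire} set $\C(R_n)$ by observing that $n$ of the $n+1$ facets are nonnegativity constraints, so every circuit has support at most two. You instead make the product structure $\pi(\R^m_{\ge 0}) = K \times \R^{n-3}_{\ge 0}$ explicit, verify everything on the fixed three-dimensional cone $K$ by an explicit conic-combination argument, and invoke \cref{lem:cartesian} to lift $\unit{3}\in\C(K)$ to a circuit of the product. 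This buys a dimension-free argument that avoids the induction and localizes all computation to $\R^3$; what it gives up is the paper's complete enumeration of $\C(R_n)$ (multiples of $\unit{1}-\unit{2}$, $\unit{3}$, and the nonzero columns), which is not needed for the lemma as stated but is the kind of extra information the paper leans on informally in later remarks. One point of care you handle correctly: \cref{lem:cartesian} is stated for the concatenated product description, and the concatenation of the irredundant descriptions of $K$ and $\R^{n-3}_{\ge 0}$ is exactly the irredundant description of $\pi(\R^m_{\ge 0})$, so the application is legitimate. Your parenthetical that $\pi(\C(\R^m_{\ge 0}))$ also contains $\zero$ (images of the zero columns when $m>n+1$) is a harmless precision that the paper glosses over as well.
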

\begin{proof}
Let $R_n := \pi(\R^m_{\ge 0})$. As a projection of a pointed cone, $R_n$ is a pointed cone with vertex $\zero$ again, spanned by the first $n+1$ column vectors of the matrix $\Pi_{n,m}$. Since $\Pi_{n,m}$ has full row rank, we have that $\dim(R_n) = n$.
We claim that each of these vectors generates an extreme ray of $R_n$, and that $R_n$ is defined by the following $n+1$ inequalities, all of which are facet-defining:
\begin{align*}
  x &\ge \zero \\
  x_1 + x_2 - x_3 &\ge 0
\end{align*}
In order to prove the claim, we proceed by induction on $n$.
The case $n=3$ is easily verified (see \cref{fig:counterex-pi-orthant}). Now let $n \ge 4$. Observe that $\{ x \in R_n \colon x_n = 0 \}$ is a face of $R_n$ which is isomorphic to $R_{n-1}$ and, thus, is a facet. The unique column of $\Pi_{n,m}$ not contained in this facet is the vector $2\unit{n}$, which must therefore generate an extreme ray of $R_n$. All other inequalities except $x_n \ge 0$ define facets of $\{ x \in R_n \colon x_n = 0 \}$ by the induction hypothesis.

Since $n$ of the $n+1$ facets of $R_n$ are defined by nonnegativity constraints, any circuit of $R_n$ cannot be supported in more than two components. This implies that the vectors in $\C(R_n)$ are multiples of $\unit{1}-\unit{2}$, $\unit{3}$, or of one of the (nonzero) column vectors of $\Pi_{n,m}$ (which capture all edge directions of $R_n$). It is easy to see that the set $\pi(\C(\R^m_{\ge 0})$, in turn, consists of multiples of edge directions of $R_n$ only. \qed 
\end{proof}

\tikzset{edge/.style={thick,black},
    hidden edge/.style={dotted,black},
    facet/.style={orange!25,draw=none},
    every node/.style={font=\small}}

\begin{figure}[hbt]%
  \centering
  \begin{tikzpicture}[x=.4cm,y=.4cm]
    \coordinate (e1) at (5,2);
    \coordinate (e2) at (2.5,4.5);
    \coordinate (e3) at (-5,2);
    \coordinate (e4) at (-2.5,4.5);
    
    \fill[draw=none,shade,left color=orange!25,right color=white,shading angle=120] 
        (e1) -- (e2) -- ($1.5*(e2)$) -- ($1.5*(e1)$) -- cycle;
    \fill[draw=none,shade,right color=orange!25,left color=white,shading angle=60]
        (e3) -- (e4) -- ($1.5*(e4)$) -- ($1.5*(e3)$) -- cycle;
    
    \fill[facet] (0,0) -- (e1) -- (e2) -- cycle;
    \fill[facet] (0,0) -- (e3) -- (e4) -- cycle;
    
    \fill[gray!75,draw=none,opacity=.15] (e1) -- (e2) -- (e4) -- (e3) -- cycle;
    
    \draw[edge] (0,0) -- ($1.5*(e1)$);
    \draw[edge] (0,0) -- ($1.5*(e2)$);
    \draw[edge] (0,0) -- ($1.5*(e3)$);
    \draw[edge] (0,0) -- ($1.5*(e4)$);
    \draw[hidden edge,thick] ($1.5*(e1)$) -- ($2*(e1)$);
    \draw[hidden edge,thick] ($1.5*(e2)$) -- ($1.8*(e2)$);
    \draw[hidden edge,thick] ($1.5*(e3)$) -- ($2*(e3)$);
    \draw[hidden edge,thick] ($1.5*(e4)$) -- ($1.8*(e4)$);
    \draw[hidden edge,thick] (e2) -- (e4);
    \draw[hidden edge,thick] (e1) -- (e2);
    \draw[hidden edge,thick] (e3) -- (e4);
    \draw[hidden edge] (e1) -- (e3);
    
    \node[below right] at (e1) {$(2,0,0)$}; 
    \node[right] at (e2) {$(1,0,1)$}; 
    \node[below left] at (e3) {$(0,2,0)$}; 
    \node[left] at (e4) {$(0,1,1)$}; 
    \node[below] at (0,0) {$\zero$};
    
    \foreach \p in {(e1),(e2),(e3),(e4),(0,0)}
        \fill[black] \p circle (.2);
  \end{tikzpicture}
  \caption{The cone $R_3 = \pi_{3,4}(\R^4_{\ge 0})$ from \cref{lem:counterex-pi-orthant}, shown here intersected with the hyperplane $x_1+x_2+x_3 = 2$. The two highlighted facets are defined by $x_1 \ge 0$ and $x_2 \ge 0$, respectively. Their intersection yields the circuit $\unit{3} \in \C(R_3)$. }
  \label{fig:counterex-pi-orthant}
\end{figure}
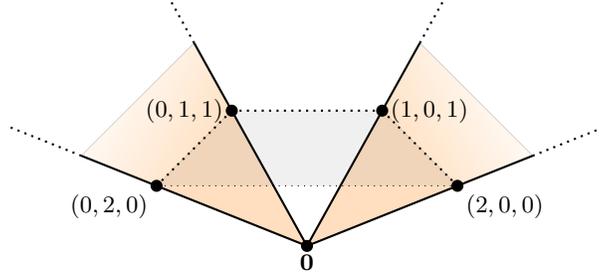

The construction in \cref{lem:counterex-pi-orthant} readily generalizes to the bounded case if we replace the nonnegative orthant $\R^m_{\ge 0}$ with the standard hypercube in $\R^m$, which we denote by $C_m = [0,1]^m$.

\begin{lemma} \label{lem:counterex-pi-zonotope}
Let $m > n \ge 3$ and $\pi := \pi_{n,m}$. Then $\pi(C_m)$ is a full-dimensional polytope and $\C(\pi(C_m)) \not\subset \pi(\C(C_m))$. Moreover, $\pi(\C(C_m))$ consists precisely of the edge directions of $\pi(C_m)$.
\end{lemma}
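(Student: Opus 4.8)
The strategy is to mirror the proof of \cref{lem:counterex-pi-orthant} as closely as possible, exploiting the fact that $\pi(C_m)$ is obtained from $\pi(\R^m_{\ge 0})$ by "truncating" each extreme ray, i.e., $\pi(C_m)$ is the zonotope generated by the column vectors of $\Pi_{n,m}$. First I would observe that $C_m = [0,1]^m$ is full-dimensional, pointed, and simple, and that $\Pi_{n,m}$ has full row rank $n$, so $\pi(C_m)$ is a full-dimensional polytope. The key structural claim I would establish is that $\pi(C_m)$ contains, as a translated face or sub-face, a copy of the cone geometry from \cref{lem:counterex-pi-orthant} around the vertex $\pi(\zero) = \zero$: near $\zero$, the polytope $\pi(C_m)$ locally looks like $R_n = \pi(\R^m_{\ge 0})$ because the only facets of $C_m$ active at the origin are $y_i \ge 0$, and the columns $2\unit{1}+\text{(lower)},\ \unit{1}+\unit{3}+\dots,\ \dots$ of $\Pi_{n,m}$ emanate from $\zero$ exactly as the extreme rays of $R_n$ do. In particular, the vertex figure of $\pi(C_m)$ at $\zero$ coincides with that of $R_n$, so $\zero$ has exactly $n+1$ incident facets: the $n$ facets inherited from the nonnegativity constraints together with the facet whose supporting inequality is $x_1 + x_2 - x_3 \ge 0$.

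**Producing the non-inherited circuit.** Once the local picture at $\zero$ is pinned down, the circuit $\unit{3}$ is recovered exactly as in \cref{lem:counterex-pi-orthant}: using the geometric definition of the circuit set (intersections of translated facets), the facet of $\pi(C_m)$ lying in $\{x_1 = 0\}$ and the one lying in $\{x_2 = 0\}$ are two facets whose affine hulls intersect in a line parallel to $\unit{3}$, so $\unit{3} \in \C(\pi(C_m))$. It then remains to show $\unit{3}$ is \emph{not} an edge direction of $\pi(C_m)$, equivalently (by \cref{prop:edge} and \cref{lem:counterex-pi-orthant}, since the edge directions of a zonotope are precisely the generating segment directions) that $\unit{3}$ is not a scalar multiple of any column of $\Pi_{n,m}$ — which is immediate by inspection of the matrix. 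Combined with the general fact that the edge directions of $\pi(C_m)$ are images of edge directions of $C_m$, and that $\pi(\C(C_m))$ — the images of the $\unit{i}$, the only circuits of the cube — consists exactly of the columns of $\Pi_{n,m}$, i.e., exactly the edge directions of $\pi(C_m)$, this yields $\C(\pi(C_m)) \not\subset \pi(\C(C_m))$ and pins down the intersection $\C(\pi(C_m)) \cap \pi(\C(C_m))$ as the edge directions, as claimed.

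**The main obstacle.** The delicate point is justifying that $\zero$ really is a vertex of $\pi(C_m)$ with the predicted $n+1$ incident facets, and more precisely that the two inequalities $x_1 \ge 0$ and $x_2 \ge 0$ are genuinely facet-defining for $\pi(C_m)$ (not merely valid) so that the geometric circuit definition applies — recall the circuit set is sensitive to redundancy, so one must argue against an irredundant description. For the cone $R_n$ this was handled by an induction producing the complete facet list; for the zonotope the cleanest route is probably to argue that the face $\{y \in C_m : y_1 = y_2 = 0\}$ of the cube is a lower-dimensional cube mapping onto a face $F$ of $\pi(C_m)$, that the segment from $\zero$ in direction $\unit{3}$ lies in $F$ (using that the third column $(1,0,1,0,\dots)^\top$ minus a suitable combination produces $\unit{3}$ within the span of the columns $y_1 = y_2 = 0$ can reach), and that $F$ has codimension $2$, so its two facets-of-containment in $\{x_1=0\}$ and $\{x_2=0\}$ are honest facets of $\pi(C_m)$. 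Handling this facet-containment bookkeeping carefully — rather than any new idea — is the real work; the rest transfers verbatim from \cref{lem:counterex-pi-orthant}.
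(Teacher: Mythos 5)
Your proposal is correct and takes essentially the same route as the paper: both identify $\pi(\R^m_{\ge 0})$ as the inner cone of $\pi(C_m)$ at the vertex $\zero$ in order to transfer the $n+1$ facet-defining inequalities (and hence the circuit $\unit{3}$) from \cref{lem:counterex-pi-orthant}, and both use the zonotope structure of $\pi(C_m)$ to conclude that $\pi(\C(C_m))$ is exactly the set of edge directions. The only slip is that for $n>3$ the translated facets $\{x_1=0\}$ and $\{x_2=0\}$ alone meet in a subspace of dimension $n-2$ rather than a line, so one must also intersect with the facets $\{x_i=0\}$ for $i\ge 4$ to isolate the direction $\unit{3}$ -- which is precisely how the argument in \cref{lem:counterex-pi-orthant} proceeds.
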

\begin{proof}
Clearly, $\zero \in \pi(C_m) \subset \R^n_{\ge 0}$. Thus, $\zero$ is a vertex of $\pi(C_m)$. Observe that $\pi(\R^m_{\ge 0})$ is the inner cone of $\pi(C_m)$ at $\zero$. Hence, all $n+1$ facet-defining inequalities of $\pi(\R^m_{\ge 0})$ also define facets of $\pi(C_m)$.
Since both polyhedra are full-dimensional, it follows from the geometric interpretation of the set of circuits that $\C(\pi(C_m)) \supset \C(\pi(\R^m_{\ge 0}))$. Since $\C(C_m) = \C(\R^m_{\ge 0})$, the first part of the statement follows from \cref{lem:counterex-pi-orthant}. 

Note that $\pi(C_m)$ is the linear projection of a hypercube and, as such, a zonotope (see \cref{fig:counterex-pi-zonotope}). Equivalently, $\pi(C_m)$ can be written as the Minkowski sum of $n+1$ line segments $[\zero, \pi_{n,m}(\unit{i})]$ for every $i \in [n+1]$. Since every edge of a zonotope is a translate of one of the line segments from which it is generated, the second part of the statement follows. \qed
\end{proof}

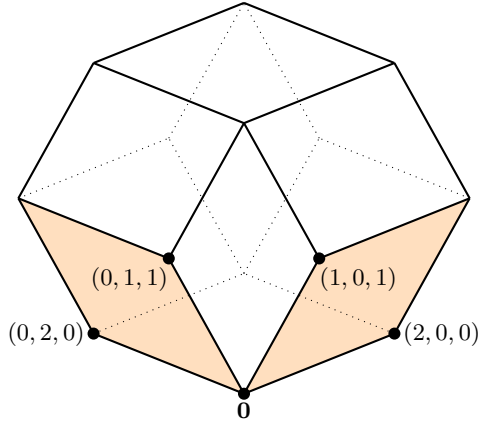
\begin{figure}[hbt]%
  \centering
  \begin{tikzpicture}[x=.4cm,y=.4cm]
    \coordinate (e1) at (5,2);
    \coordinate (e2) at (2.5,4.5);
    \coordinate (e3) at (-5,2);
    \coordinate (e4) at (-2.5,4.5);
    
    \fill[facet] (0,0) -- (e1) -- ($(e1)+(e2)$) -- (e2) -- cycle;
    \fill[facet] (0,0) -- (e3) -- ($(e3)+(e4)$) -- (e4) -- cycle;
    
    \draw[hidden edge] (e1) -- ($(e1)+(e3)$);
    \draw[hidden edge] (e3) -- ($(e1)+(e3)$);
    \draw[hidden edge] ($(e1)+(e3)$) -- ($(e1)+(e2)+(e3)$);
    \draw[hidden edge] ($(e1)+(e3)$) -- ($(e1)+(e3)+(e4)$);
    \draw[hidden edge] ($(e1)+(e2)$) -- ($(e1)+(e2)+(e3)$);
    \draw[hidden edge] ($(e3)+(e4)$) -- ($(e1)+(e3)+(e4)$);
    \draw[hidden edge] ($(e1)+(e2)+(e3)$) -- ($(e1)+(e2)+(e3)+(e4)$);
    \draw[hidden edge] ($(e1)+(e3)+(e4)$) -- ($(e1)+(e2)+(e3)+(e4)$);
    
    \draw[edge] (0,0) -- (e1);
    \draw[edge] (0,0) -- (e2);
    \draw[edge] (0,0) -- (e3);
    \draw[edge] (0,0) -- (e4);
    \draw[edge] (e2) -- ($(e1)+(e2)$);
    \draw[edge] (e2) -- ($(e2)+(e4)$);
    \draw[edge] (e4) -- ($(e3)+(e4)$);
    \draw[edge] (e4) -- ($(e2)+(e4)$);
    \draw[edge] (e1) -- ($(e1)+(e2)$);
    \draw[edge] (e3) -- ($(e3)+(e4)$);
    \draw[edge] ($(e1)+(e2)$) -- ($(e1)+(e2)+(e4)$);
    \draw[edge] ($(e2)+(e4)$) -- ($(e1)+(e2)+(e4)$);
    \draw[edge] ($(e3)+(e4)$) -- ($(e2)+(e3)+(e4)$);
    \draw[edge] ($(e2)+(e4)$) -- ($(e2)+(e3)+(e4)$);
    \draw[edge] ($(e1)+(e2)+(e4)$) -- ($(e1)+(e2)+(e3)+(e4)$);
    \draw[edge] ($(e2)+(e3)+(e4)$) -- ($(e1)+(e2)+(e3)+(e4)$);
    
    \node[right] at (e1) {$(2,0,0)$}; 
    \node[below right] at (e2) {\!\!$(1,0,1)$}; 
    \node[left] at (e3) {$(0,2,0)$}; 
    \node[below left] at (e4) {$(0,1,1)$\!\!}; 
    \node[below] at (0,0) {$\zero$};
    
    \foreach \p in {(e1),(e2),(e3),(e4),(0,0)}
        \fill[black] \p circle (.2);
  \end{tikzpicture}
  \caption{The zonotope $\pi_{3,4}(C_4)$ from \cref{lem:counterex-pi-zonotope}. The two facets that yield the circuit direction $\unit{3}$ are highlighted.}
  \label{fig:counterex-pi-zonotope}
\end{figure}

From the proof of \cref{lem:counterex-pi-zonotope}, we can extract a more general statement about the inheritance of circuits in zonotopes: the only circuits that a zonotope inherits from its hypercube extension are the edge directions.

\begin{corollary}
Let $\pi \colon \R^m \rightarrow \R^n$ be a linear map. Then $\C(\pi(C_m)) \cap \pi(\C(C_m))$ consists precisely of the edge directions of the zonotope $\pi(C_m)$.
\end{corollary}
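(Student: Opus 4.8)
The plan is to reduce the claim to a description of $\pi(\C(C_m))$ together with the fact that the set of circuits of any polyhedron always contains its edge directions (\cref{prop:edge}). First I would recall that the only inequalities in a minimal description of $C_m$ are the box constraints $\zero \le x \le \allone$ and that there are no equalities, so that $\C(C_m) = \C(\R^m_{\ge 0})$ is exactly the set of nonzero scalar multiples of the standard unit vectors $\unit{1},\dots,\unit{m}$. Consequently $\pi(\C(C_m))$ is the set of all scalar multiples (including $\zero$) of the vectors $\pi(\unit{1}),\dots,\pi(\unit{m})$.

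Next I would appeal to the zonotope structure already exploited in the proof of \cref{lem:counterex-pi-zonotope}: $Z := \pi(C_m)$ is the Minkowski sum of the segments $[\zero,\pi(\unit{i})]$, $i \in [m]$, so every edge of $Z$ is a translate of one of the generating segments, and conversely each nonzero generator $\pi(\unit{i})$ is realized as an edge direction of $Z$. For the converse direction, choose a linear functional $c$ in the linear span of $Z$ (which contains all $\pi(\unit{i})$, since $\zero \in Z$) that is orthogonal to $\pi(\unit{i})$ but otherwise generic; then the face of $Z$ maximized by $c$ is a translate of $\sum_{j \,:\, \pi(\unit{j}) \parallel \pi(\unit{i})} [\zero,\pi(\unit{j})]$, which is a nondegenerate segment parallel to $\pi(\unit{i})$ and hence an edge of $Z$. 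It follows that the set of edge directions of $Z$ is precisely the set of nonzero vectors in $\pi(\C(C_m))$.

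Finally I would put the two pieces together. Since $\C(Z)$ never contains $\zero$, intersecting it with $\pi(\C(C_m))$ has the same effect as intersecting it with the nonzero part of $\pi(\C(C_m))$, which by the previous step is exactly the set of edge directions of $Z$; and by \cref{prop:edge} this set is contained in $\C(Z)$, so the intersection equals it. The degenerate cases --- $Z$ a single point (all $\pi(\unit{i}) = \zero$) or a segment (all nonzero $\pi(\unit{i})$ mutually parallel) --- are handled by the same reasoning and can also be checked directly.

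I expect the only genuinely nonroutine point to be the claim that every nonzero generator of a zonotope occurs as an edge direction; this is classical but worth spelling out via the generic-functional argument above, where some care is needed when several of the $\pi(\unit{i})$ are parallel, since they then jointly contribute only a single edge direction rather than one each. Everything else is bookkeeping with the circuit set of the hypercube and the containment of edge directions in the set of circuits.
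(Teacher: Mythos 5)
Your proof is correct and follows essentially the same route the paper intends when it extracts this corollary from the proof of \cref{lem:counterex-pi-zonotope}: identify $\C(C_m)$ with the unit-vector directions, use that every edge of the zonotope $\pi(C_m)$ is a translate of a generating segment, and (the converse inclusion, which you rightly flag as the only nonroutine point and prove via a generic functional orthogonal to $\pi(\unit{i})$) that every nonzero generator occurs as an edge direction. One small slip: the containment of edge directions in $\C(\pi(C_m))$ is not \cref{prop:edge} (that lemma is the inheritance statement for edge directions under projections) but is part of the definition of circuits in \cref{sec:definitions}; this does not affect the validity of your argument.
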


Let us point out another implication of the previous results. Let $P_1,P_2 \subset \R^n$ be polyhedra. Then the Minkowski sum $P_1 + P_2$ is the image of $P_1 \times P_2$ under the map $\sigma \colon (x,y) \mapsto x+y$. The proof of \cref{lem:counterex-pi-zonotope} implies that, in general, $\C(P_1 + P_2) \not\subseteq \C(P_1) \cup \C(P_2) = \sigma(\C(P_1 \times P_2))$, where the last identity follows from \cref{lem:cartesian}. This means that taking the Minkowski sum of polyhedra may create a circuit that is not a circuit of any of the summands.

Recall that, by \cref{cor:dimension}, $3$ is the minimal dimension of any polyhedron which does not inherit its circuits from an extension. Indeed, the lowest-dimensional counterexamples given in \cref{lem:counterex-pi-orthant,lem:counterex-pi-zonotope} are 3-dimensional.
The family of cones given in \cref{lem:counterex-pi-orthant} is minimal in yet another sense: any $n$-dimensional \emph{unbounded} pointed polyhedron with $n$ facets (and, thus, $n$ extreme rays) is a simplicial cone. Therefore, all circuit directions in such a polyhedron are edge directions, which are naturally inherited from any extension (see \cref{prop:edge}).
In the case of \emph{bounded} polyhedra, any counterexample needs one more facet (and one more vertex), i.e., at least $n+2$ facets and vertices each -- otherwise, it is a simplex and has no circuits that are not also edge directions. 
Even though the zonotopes from \cref{lem:counterex-pi-zonotope} do not satisfy this additional minimality requirement, we can obtain a family of polytopes that are minimal in this sense with a little extra work, using the same projections $\pi_{n,m}$.
To this end, let $S_m$ denote the simplex $S_m = \{x \in \R^m \colon x \ge \zero,\, \sum_{i=1}^m x_i \le 1 \}$ for $m \in \N$.

\begin{lemma} \label{lem:counterex-pi-simplex}
Let $m > n \ge 3$ and $\pi := \pi_{n,m}$. Then $\pi(S_m)$ is a full-dimensional polytope with $n+2$ facets and $n+2$ vertices, and $\C(\pi(S_m)) \not\subset \pi(\C(S_m))$.
Moreover, $\C(\pi(S_m)) \cap \pi(\C(S_m))$ consists precisely of the edge directions of $\pi(S_m)$.
\end{lemma}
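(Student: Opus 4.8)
The plan is to reuse the cone $R := \pi(\R^m_{\ge 0})$ from \cref{lem:counterex-pi-orthant} exactly as the proof of \cref{lem:counterex-pi-zonotope} uses it for the hypercube, and then to cut it off by one additional hyperplane. Write $v_i := \Pi_{n,m}\unit{i}$ for the $i$-th column of $\Pi_{n,m}$, so that $v_1,\dots,v_{n+1}$ are the nonzero columns generating the extreme rays of $R$, while $v_{n+2} = \dots = v_m = \zero$. Since $S_m$ has vertices $\zero,\unit{1},\dots,\unit{m}$, we obtain $\pi(S_m) = \conv\{\zero,v_1,\dots,v_{n+1}\} \subseteq R$, with $\zero$ a vertex of $\pi(S_m)$ whose inner cone equals $R$ (the inner cone of $S_m$ at $\zero$ is $\R^m_{\ge 0}$, and the argument of \cref{lem:counterex-pi-zonotope} applies). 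Hence $\pi(S_m)$ is full-dimensional and all $n+1$ facet-defining inequalities of $R$, namely $x \ge \zero$ and $x_1+x_2-x_3 \ge 0$, are facet-defining for $\pi(S_m)$ as well. A direct computation gives $\allone^\top v_i = 2$ for every $i \in [n+1]$, so all nonzero vertices of $\pi(S_m)$ lie on $H := \{x : \allone^\top x = 2\}$; as $\zero \notin H$ and $\pi(S_m)$ is $n$-dimensional, $H$ supports a facet. Thus $\pi(S_m) \subseteq R \cap \{\allone^\top x \le 2\}$, and conversely $R \subseteq \R^n_{\ge 0}$ makes $R \cap \{\allone^\top x \le 2\}$ a polytope whose only vertices are $\zero$ and the points where the extreme rays $\cone(v_i)$ of $R$ meet $H$ --- that is, the $v_i$ themselves, since $\allone^\top v_i = 2$. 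So $\pi(S_m) = R \cap \{\allone^\top x \le 2\}$, with exactly the $n+2$ facets $x \ge \zero$, $x_1+x_2-x_3 \ge 0$, $\allone^\top x \le 2$, and exactly the $n+2$ vertices $\zero,v_1,\dots,v_{n+1}$.

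For the circuit statement, \cref{lem:counterex-pi-orthant} gives $\unit{3} \in \C(R)$ (it is the direction of the intersection of the facet hyperplanes $x_1 = x_2 = x_4 = \dots = x_n = 0$). Since $\pi(S_m)$ is full-dimensional and inherits every facet of $R$, the geometric description of the circuit set yields $\C(R) \subseteq \C(\pi(S_m))$, so $\unit{3} \in \C(\pi(S_m))$. On the other hand, one checks directly that $\C(S_m)$ consists precisely of the multiples of $\unit{i}$ and of $\unit{i}-\unit{j}$ --- i.e.\ of the edge directions of the simplex --- so $\pi(\C(S_m))$ is the set of multiples of the columns $v_i$ and of the differences $v_i - v_j$. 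No column $v_i$ is a multiple of $\unit{3}$, and a difference $v_i - v_j$ equals a nonzero multiple of $\unit{3}$ only if the two columns agree outside coordinate $3$, which never happens; hence $\unit{3} \notin \pi(\C(S_m))$, and $\C(\pi(S_m)) \not\subseteq \pi(\C(S_m))$.

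It remains to prove the ``moreover''. One inclusion is immediate: every edge direction of $\pi(S_m)$ is a circuit of $\pi(S_m)$ and, by \cref{prop:edge}, the image of an edge direction (hence a circuit) of $S_m$, so it lies in $\C(\pi(S_m)) \cap \pi(\C(S_m))$. For the reverse inclusion, $\pi(\C(S_m))$ consists of the multiples of $v_i$ --- for $i \le n+1$ these are the directions of the edges $[\zero,v_i]$ --- of $\zero$ (not a circuit), and of the differences $v_i - v_j$ with $i,j \le n+1$. The top facet $F := \pi(S_m) \cap H = \conv\{v_1,\dots,v_{n+1}\}$ has the unique affine dependence $v_1 - 2v_2 - v_3 + 2v_4 = \zero$, so $F$ is an iterated pyramid with apexes $v_5,\dots,v_{n+1}$ over the quadrilateral $\conv\{v_1,v_2,v_3,v_4\}$, whose diagonals are $[v_1,v_4]$ and $[v_2,v_3]$; consequently the only non-adjacent pairs of vertices of $F$ are $\{v_1,v_4\}$ and $\{v_2,v_3\}$, so every difference $v_i - v_j$ other than $\pm(v_1-v_4)$ and $\pm(v_2-v_3)$ is an edge direction of $\pi(S_m)$. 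For the two exceptional directions $g \in \{v_1-v_4, v_2-v_3\}$ I would count the facet normals of $\pi(S_m)$ --- namely $\unit{1},\dots,\unit{n}$, $\unit{1}+\unit{2}-\unit{3}$, and $\allone$ --- that are orthogonal to $g$: only $\unit{4},\dots,\unit{n}$ and $\allone$ qualify, i.e.\ $n-2$ of them, one short of the $n-1$ linearly independent facet normals a circuit of a full-dimensional polyhedron in $\R^n$ must be orthogonal to. Hence $v_1-v_4, v_2-v_3 \notin \C(\pi(S_m))$, and $\C(\pi(S_m)) \cap \pi(\C(S_m))$ equals exactly the set of edge directions of $\pi(S_m)$.

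The routine parts are the bookkeeping with the columns of $\Pi_{n,m}$ and the reuse of $R$ and its circuits from \cref{lem:counterex-pi-orthant}. The step I expect to require the most care is the last one: correctly reading off the face lattice of the top facet $F$ --- identifying the two diagonals of the quadrilateral base, hence the only two differences $v_i - v_j$ that fail to be edge directions --- and then the short rank count that also excludes those two directions from $\C(\pi(S_m))$.
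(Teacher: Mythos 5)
Your proof is correct and follows essentially the same route as the paper's: identify $\pi(S_m)$ as $R\cap\{\allone^\top x\le 2\}$, exhibit $\unit{3}\in\C(\pi(S_m))\setminus\pi(\C(S_m))$ via the cone from \cref{lem:counterex-pi-orthant}, reduce $\pi(\C(S_m))$ to vertex differences, and rule out exactly the two non-adjacent pairs $\{v_1,v_4\}$, $\{v_2,v_3\}$. The only (equally valid) variations are cosmetic: you read off the adjacencies from the unique affine dependence on the top facet rather than by induction on the facets $x_i=0$, and you exclude the two diagonal directions by counting tight facet normals of rank $n-2$ rather than by exhibiting $\unit{1}-\unit{2}$ as a vector of strictly smaller support.
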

\begin{proof}
Let $R_n := \pi(\R^m_{\ge 0})$ and $P_n := \pi(S_m)$.
First observe that $P_n = \{ x \in R_n \colon \sum_{i=1}^n x_i \le 2 \}$ since the entries of each nonzero column of $\Pi_{n,m}$ sum to 2 (see \cref{fig:counterex-pi-orthant}). In particular, this implies that the inequality  $\sum_{i=1}^n x_i \le 2$ is facet-defining for $P_n$, and that all $n+1$ nonzero column vectors of $\Pi_{n,m}$, along with the origin $\zero$, are vertices of $P_n$.
Further note that $\dim(P_n) = \dim(R_n)= n$ and therefore $\C(P_n) \supset \C(R_n)$.

Up to rescaling, $\pi(\C(S_m))$ is the set of all difference vectors between pairs of vertices of $P_n$. 
We claim that every such difference vector that belongs to $\C(P_n)$ is the difference of two adjacent vertices and therefore an edge direction of $P_n$. Indeed, for any $i \ge 4$, the facet $\{ x \in P_n \colon x_i = 0 \}$ contains all vertices of $P_n$ but $2 \unit{i}$. Hence, $2 \unit{i}$ must be adjacent to all other vertices. This implies that the only candidate pairs of non-adjacent vertices are contained in the face of $P_n$ defined by $x_i = 0$ for all $i \ge 4$. Since this face is isomorphic to $P_3$, there are exactly two pairs of non-adjacent vertices, as can be seen in \cref{fig:counterex-pi-orthant}. The corresponding difference vectors are $2\unit{1} - \unit{2} - \unit{3}$ and $2\unit{2} - \unit{1} - \unit{3}$, respectively. Neither of them is a circuit in $\C(P_n)$ because $\unit{1} - \unit{2}$ has strictly smaller support. This implies that every vector in $\C(P_n) \cap \pi(\C(S_m))$ is an edge direction of $P_n$.

It remains to show that $P_n$ has a circuit which is not an edge direction. Since $\C(P_n) \supset \C(R_n)$, the proof of \cref{lem:counterex-pi-orthant} implies that $\unit{3} \in \C(P_n)$. Observe that $P_n$ has no pair of vertices that only differ in the third coordinate and, hence, $\unit{3} \notin \pi(\C(S_m))$. \qed
\end{proof}

Combining \cref{lem:counterex-pi-orthant,lem:counterex-pi-simplex}, we obtain the following theorem.
\thmcounterex*

Even though the counterexamples discussed above may seem pathological, they have an interesting property: for $m=n+1$, the simplicial extensions in \cref{lem:counterex-pi-orthant,lem:counterex-pi-simplex} have the same number of vertices and extreme rays as their projections. Indeed, such a canonical `simplex extension' exists for every pointed polyhedron \cite{fkpt-13} and will be the starting point for proving \cref{thm:edge} in \cref{sec:characterization}.

Next, we will see that not only do there exist counterexamples that fail the inheritance of circuits, but the difference in the number of circuits that are inherited and those that are not may be exponentially large in the dimension.

\subsection{Counterexamples with Many Non-Inherited Circuits}

Our goal is to prove \cref{thm:counterex-exponential}, which we restate for convenience.
\thmexponential*

The proof of \cref{thm:counterex-exponential} relies on the interplay between the basic solutions of a polytope and the circuits in its homogenization. We first recall the relevant concepts.

A \emph{basic solution} of a linear system $Ax = b, Bx \le d$ in variables $x \in \R^n$ is a vector $\overline{x} \in \R^n$ such that the row submatrix of $\begin{pmatrix} A \\ B \end{pmatrix}$ obtained by taking all rows for which $\overline{x}$ satisfies the corresponding constraint at equality has full column rank. Note that $\overline{x}$ need not be feasible.
If $P$ is the polyhedron in $\R^n$ defined by the above system, we will also call $\overline{x}$ a basic solution of $P$. We denote the set of all such basic solutions of $P$ by $\B(P)$. Note that $\B(P)$ contains the set of vertices $\V(P)$ of $P$.
For our purposes, the following equivalent characterization of $\B(P)$ will be convenient.
\begin{lemma} \label{lem:basic-sol}
Let $P = \{ x \in \R^n \colon Ax = b, Bx \le d \}$ be a pointed polyhedron. Then $\B(P)$ is equal to the set of all vectors $g \in \R^n$ such that $Ag = b$ and $Bg - d$ is support-minimal in $\{ By - d \colon y \in \R^n, Ay = b \}$.
\end{lemma}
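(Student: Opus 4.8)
The plan is to prove \cref{lem:basic-sol} directly by unwinding the definition of a basic solution in the pointed case, exploiting the fact that $\begin{pmatrix} A \\ B \end{pmatrix}$ has trivial kernel. First I would observe that for a pointed polyhedron the combined matrix $M := \begin{pmatrix} A \\ B \end{pmatrix}$ has full column rank $n$, since $\ker(A) \cap \ker(B) = \{\zero\}$. Fix a candidate vector $g$ with $Ag = b$; I want to show that $g$ being a basic solution of $P$ (i.e., the rows of $M$ tight at $g$ have full column rank $n$) is equivalent to $Bg - d$ being support-minimal in $\mathcal{D} := \{ By - d \colon y \in \R^n, Ay = b \}$. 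Note that every element of $\mathcal{D}$ is of the form $B(g + h) - d = (Bg - d) + Bh$ for $h \in \ker(A)$, so $\mathcal{D} = (Bg - d) + B\cdot\ker(A)$; this affine-subspace picture is what drives the equivalence.

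The forward direction: suppose $g \in \B(P)$, so the tight rows at $g$ — namely all rows of $A$, plus the rows $B_i$ with $(Bg)_i = d_i$ — have full column rank $n$. Suppose for contradiction some $By' - d$ with $y' = g + h$, $h \in \ker(A)$, had support strictly contained in that of $Bg - d$. Then for every $i$ with $(Bg)_i = d_i$ we would need $(By')_i = d_i$ as well, i.e., $(Bh)_i = 0$; combined with $Ah = \zero$, the vector $h$ lies in the kernel of the tight submatrix, forcing $h = \zero$ by full column rank, hence $By' - d = Bg - d$, contradicting strict containment. So $Bg - d$ is support-minimal in $\mathcal{D}$. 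The reverse direction runs the same argument contrapositively: if the tight submatrix at $g$ does \emph{not} have full column rank, pick a nonzero $h$ in its kernel; then $Ah = \zero$ and $(Bh)_i = 0$ for every tight index $i$, so $g + \lambda h$ for small enough $\lambda > 0$ satisfies $A(g+\lambda h) = b$ while $B(g + \lambda h) - d$ has support contained in — and, for suitable $\lambda$ making some previously strict inequality tight or by genericity strictly smaller than — that of $Bg - d$, so $g$ is not support-minimal. I should be slightly careful here: to get a strict containment I may instead argue that since $Bh \ne \zero$ (because $h \notin \ker(B)$, as $h \ne \zero$ and $\ker A \cap \ker B = \{\zero\}$), there is some index $j$ with $(Bh)_j \ne 0$; scaling $h$ so that $g + h$ drives that coordinate to equality (or choosing the sign of $h$ appropriately and a scalar so that no non-tight coordinate of $Bg-d$ changes sign) yields a point in $\mathcal{D}$ whose support is a proper subset.

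The main obstacle is this last bookkeeping step — ensuring one can genuinely achieve \emph{strict} support reduction rather than merely non-strict containment, and handling the possibility that moving along $h$ flips the sign of a currently-satisfied strict inequality. The clean way around it is to note that the set $B\cdot\ker(A)$ is a linear subspace, so the support-minimal elements of the coset $(Bg-d) + B\cdot\ker(A)$ are exactly those whose support is minimal by inclusion; standard arguments about minimal-support elements of an affine translate of a subspace (cf.\ the elementary-vector/circuit theory of \cite{r-69}) show that $Bg - d$ fails minimality precisely when there is a nonzero $Bh$, $h \in \ker A$, vanishing on the support of $Bg-d$ — and that is exactly the rank condition defining basic solutions. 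Pointedness is used exactly once and crucially: it guarantees $h \ne \zero \Rightarrow Bh \ne \zero$, which is what lets the rank condition on the tight submatrix translate into a support condition on $Bg-d$ alone.
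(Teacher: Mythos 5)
Your argument is correct and is essentially the paper's proof written out in full: the paper simply asserts that, since $\begin{pmatrix} A \\ B \end{pmatrix}$ has rank $n$ by pointedness, support-minimality of $Bg-d$ is equivalent to the tight-row submatrix $\begin{pmatrix} A \\ B' \end{pmatrix}$ having rank $n$, and your two directional arguments (kernel of the tight submatrix forces $h=\zero$; conversely a nonzero kernel vector $h$ with $Bh\ne\zero$, guaranteed by pointedness, can be scaled to zero out a coordinate of $Bg-d$) are exactly the content of that equivalence. One slip in your closing recap: minimality of $Bg-d$ fails precisely when some nonzero $Bh$ with $h\in\ker(A)$ vanishes on the \emph{complement} of the support of $Bg-d$ (i.e., on the tight indices, equivalently $Bh$ has support contained in that of $Bg-d$), not "on the support" as written --- but since your displayed argument uses the correct condition, this is only a typo, not a gap.
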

\begin{proof}
Let $g \in \R^n$ such that $Ag = b$. Note that the matrix $\begin{pmatrix} A \\ B \end{pmatrix}$ has rank $n$ since $P$ is pointed. Therefore, $Bg-d$ is support-minimal in $\{ By - d \colon y \in \R^n, Ay = b \}$ if and only if $\begin{pmatrix} A \\ B' \end{pmatrix}$ has rank $n$, where $B'$ is obtained from $B$ by deleting all rows in the support of $Bg-d$. \qed
\end{proof}

Note the similarity between the above characterization of $\B(P)$ and the definition of the set of circuits $\C(P)$ (cf.~\cref{sec:definitions}). In this sense, one can think of the basic solutions (which include all vertices) as the zero-dimensional analogue of the circuits (which include all edge directions). In fact, we can state this connection more precisely as follows.

Following \cite{zie-95}, we define the \emph{homogenization} of a polyhedron $P = \{ x \in \R^n \colon Ax = b, Bx \le d \}$ as
\begin{equation} \label{eq:hom}\tag{hom}
    \hom(P) := \{ (t,x) \in \R \times \R^n \colon t \ge 0, Ax - bt = \zero, Bx - dt \le \zero \}.
\end{equation}
Observe that $P = \{ x \in \R^n \colon (1,x) \in \hom(P) \}$. If $P$ is pointed, then $\hom(P)$ is a pointed polyhedral cone whose extreme rays are generated by all vectors $(0,g)$, where $g$ is the direction of an extreme ray of $P$, and $(1,v)$ for all vertices $v \in \V(P)$. The circuits of $\hom(P)$ are in correspondence with the basic solutions and circuits of $P$, as shown next.

\begin{lemma} \label{lem:hom-circuits}
Let $P = \{ x \in \R^n \colon Ax = b, Bx \le d \}$ be a pointed polyhedron.
Up to rescaling, the circuits of $\hom(P)$ w.r.t.\ the system \cref{eq:hom} are the nonzero vectors $(\gamma,g) \in \R \times \R^n$ for which one of the following holds:
\begin{myenumerate}
  \item $\gamma = 0$ and $g \in \C(P)$,
    \label{lem:hom-circuits-i}
  \item $\gamma = 1$ and $g \in \B(P)$.
    \label{lem:hom-circuits-ii}
\end{myenumerate}
\end{lemma}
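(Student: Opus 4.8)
The statement asserts a precise correspondence between the circuits of $\hom(P)$ and the pair (basic solutions, circuits) of $P$. The plan is to unravel the definition of circuits for the explicit system \cref{eq:hom} and split into the two cases $\gamma = 0$ and $\gamma \ne 0$ (which after rescaling we normalize to $\gamma = 1$). The constraint matrix of \cref{eq:hom} has equality part $\begin{pmatrix} 0 & A \\ -b & -B \end{pmatrix}$ acting on $(t,x)$... more precisely the equalities are $Ax - bt = \zero$ and the inequalities are $-t \le 0$ together with $Bx - dt \le \zero$. So a circuit of $\hom(P)$ is a nonzero $(\gamma,g)$ with $Ag - b\gamma = \zero$ such that the support of the vector $(-\gamma,\, Bg - d\gamma) \in \R \times \R^q$ is inclusion-minimal among all nonzero $(\gamma',g')$ in the kernel of $(A \mid -b)$.

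\textbf{Case $\gamma = 0$.} Here the kernel condition reads $Ag = \zero$, i.e.\ $g \in \ker(A)$, and the first coordinate of $(-\gamma, Bg - d\gamma)$ vanishes, so support-minimality of $(0, Bg)$ within the full set $\{(-\gamma', B g' - d\gamma') : (\gamma',g') \in \ker(A\mid -b)\setminus\{\zero\}\}$ has to be analyzed. First I would observe that any competitor with $\gamma' \ne 0$ has a nonzero first coordinate, so its support cannot be contained in that of $(0,Bg)$; hence only competitors with $\gamma' = 0$ matter, and among those the condition becomes exactly: $Bg$ is support-minimal in $\{By' : y' \in \ker(A)\setminus\{\zero\}\}$. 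That is precisely the definition of $g \in \C(P)$. Conversely every $g \in \C(P)$ yields $(0,g)$ with support-minimal image, since we've just seen the $\gamma' \ne 0$ competitors are irrelevant. This gives case \cref{lem:hom-circuits-i}.

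\textbf{Case $\gamma \ne 0$, normalized to $\gamma = 1$.} Now $g$ satisfies $Ag = b$, and the image vector is $(-1, Bg - d)$, whose first coordinate is $-1 \ne 0$; so every competitor $(\gamma', g')$ whose support is contained in it must also have $\gamma' \ne 0$, and after rescaling we may take $\gamma' = 1$, giving $g' \in \R^n$ with $Ag' = b$. Support containment of $(-1, Bg'-d)$ in $(-1, Bg-d)$ then reduces to the support of $Bg' - d$ being contained in that of $Bg - d$. Hence $(1,g)$ is a circuit of $\hom(P)$ iff $Bg - d$ is support-minimal in $\{By' - d : y' \in \R^n, Ay' = b\}$, which by \cref{lem:basic-sol} is exactly the condition $g \in \B(P)$. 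This yields case \cref{lem:hom-circuits-ii}. Finally one checks that these are the only circuits, since every nonzero $(\gamma,g) \in \ker(A \mid -b)$ falls into one of the two cases after rescaling by $1/\gamma$ when $\gamma \ne 0$. \qed

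\textbf{Main obstacle.} There is no deep difficulty here; the statement is essentially a bookkeeping exercise once the system \cref{eq:hom} is written out explicitly and one notices the decisive role of the sign constraint $t \ge 0$, whose slack $-t = -\gamma$ separates the two regimes $\gamma = 0$ and $\gamma \ne 0$ cleanly. The one point that needs care is making sure the support-minimality comparison is taken over the \emph{right} ambient set --- namely all of $\ker(A \mid -b)\setminus\{\zero\}$, not just vectors with a fixed value of $\gamma$ --- and then arguing that the first-coordinate bookkeeping forces the comparison to collapse to within a fixed $\gamma$-regime anyway. I would also state explicitly that pointedness of $P$ is what guarantees $\hom(P)$ is a pointed cone and that \cref{lem:basic-sol} applies, so the case $\ker(A)\cap\ker(B) \ne \{\zero\}$ need not be separately addressed.
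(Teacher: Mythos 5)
Your case \emph{(i)} and the ``only if'' direction of case \emph{(ii)} are fine and match the paper's argument. But the ``if'' direction of case \emph{(ii)} contains a genuine error: you claim that every competitor $(\gamma',g')$ whose support is contained in that of $(-1,\,Bg-d)$ must have $\gamma'\ne 0$. That is backwards. Since the candidate's support \emph{contains} the index of the constraint $t\ge 0$, a competitor with $\gamma'=0$ (whose first coordinate vanishes) is perfectly free to have its support contained in --- indeed automatically \emph{strictly} contained in --- the candidate's support: any $y\in\ker(A)\setminus\{\zero\}$ with $\mathrm{supp}(By)\subset\mathrm{supp}(Bg-d)$ would witness that $(1,g)$ is not support-minimal. (Your first-coordinate argument works in case \emph{(i)} precisely because there the roles are reversed: the candidate's support \emph{omits} the first index while any inhomogeneous competitor's support must contain it.) So the comparison does not ``collapse to within a fixed $\gamma$-regime'' in case \emph{(ii)}, and the step you dismiss as bookkeeping is exactly the nontrivial content of the lemma.

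To close the gap you must rule out these homogeneous competitors, which is where the definition of a basic solution (equivalently, pointedness of the tight subsystem) enters in an essential way: if $g\in\B(P)$ and $B'$ denotes $B$ with the rows in $\mathrm{supp}(Bg-d)$ deleted, then $\left(\begin{smallmatrix} A \\ B' \end{smallmatrix}\right)$ has rank $n$, so any $y$ with $Ay=\zero$ and $\mathrm{supp}(By)\subset\mathrm{supp}(Bg-d)$ satisfies $B'y=\zero$ and hence $y=\zero$. This is precisely the contradiction argument the paper uses. Your appeal to \cref{lem:basic-sol} only converts support-minimality \emph{within the affine slice $Ay=b$} into membership in $\B(P)$; it does not by itself address the $\gamma'=0$ competitors.
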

\begin{proof}
Let $(\gamma,g) \in \R \times \R^n$ be a nonzero vector with $Ag - b \gamma = \zero$. If $\gamma = 0$, then $(\gamma,g) \in \C(\hom(P))$ if and only if $Bg$ is support-minimal in $\{ By \colon y \ne \zero, Ay = \zero \}$, i.e., if and only if $g \in \C(P)$.
If $\gamma \ne 0$, we may assume after rescaling that $\gamma = 1$. Suppose that $(1,g) \in \C(\hom(P))$. Then we must have in particular that $Bg-d$ is support-minimal in $\{ By-d \colon y \in \R^n, Ay = b \}$. Hence, $g \in \B(P)$ by \cref{lem:basic-sol}.
Conversely, $(1,g) \in \C(\hom(P))$ if $g \in \B(P)$. To see this, suppose for the sake of contradiction that there exists some vector $y \in \R^n \setminus \{\zero\}$ such that $Ay = \zero$ and the support of $By$ is contained in the support of $Bg-d$. Let $B'$ be the matrix obtained from $B$ by deleting all rows in the support of $Bg-d$. Then $B'y = \zero$. Since $\begin{pmatrix} A \\ B' \end{pmatrix}$ has rank $n$, we must have that $y = \zero$, a contradiction. \qed
\end{proof}

The crucial observation for proving \cref{thm:counterex-exponential} now is the following:
if $P$ is a polytope with vertex set $\V(P)$, then $\hom(P)$ is the image of the nonnegative orthant $\R^{\V(P)}_{\ge 0}$ under the projection $x \mapsto \sum_{v \in \V(P)} x_v (1,v)$. This projection maps the circuits of the nonnegative orthant to the edge directions of $\hom(P)$.
In particular, the number of unique circuit directions of $\R^{\V(P)}_{\ge 0}$ equals $|\V(P)|$ while $\C(\hom(P)) \supset \{1\} \times \B(P)$ by \cref{lem:hom-circuits}. Here it is important to stress that both $\hom(P)$ and $\B(P)$ depend on the particular inequality description of $P$. If we assume a minimal description, then every inequality in \cref{eq:hom} (possibly except $t \ge 0$) defines a facet of $\hom(P)$ and all circuits in \cref{lem:hom-circuits}\cref{lem:hom-circuits-ii} are indeed circuits. To prove \cref{thm:counterex-exponential}, it therefore suffices to exhibit a family of polytopes with polynomially many (in the dimension) vertices but exponentially many basic solutions (w.r.t.\ a minimal description). The corresponding homogenizations will then have an exponential number of unique circuit directions of which only a polynomial number are inherited from the associated nonnegative orthant extension. 
We show that for all $n \ge 2$, the standard cross-polytope $Q_n := \{ x \in \R^n \colon x^\top y \le 1 \;\forall y \in \{-1,1\}^n \}$, suitably cropped by intersecting it with a hypercube, satisfies all the desired properties. This will complete the proof of \cref{thm:counterex-exponential}.
\begin{lemma}
Let $n \ge 2, \delta \in (\frac{1}{2},1)$ and $Q'_n := Q_n \cap [-\delta,\delta]^n$. Then $|\V(Q'_n)| = 4n(n-1)$ and $\B(Q'_n) \supset \{-\delta,\delta\}^n$.
\end{lemma}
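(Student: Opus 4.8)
The plan is to analyze the two claims separately, since they concern different faces of $Q'_n = Q_n \cap [-\delta,\delta]^n$. For the second claim, $\B(Q'_n) \supseteq \{-\delta,\delta\}^n$, I would argue directly from the definition of a basic solution. Fix a sign vector $\varepsilon \in \{-1,1\}^n$ and consider the point $\overline{x} = \delta \varepsilon$. This point satisfies each box constraint $x_i \le \delta$ (if $\varepsilon_i = 1$) or $-x_i \le \delta$ (if $\varepsilon_i = -1$) at equality, so the $n$ rows of the box constraint submatrix that are tight at $\overline{x}$ are $\pm\unit{1},\dots,\pm\unit{n}$, which already form a nonsingular $n \times n$ matrix. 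Hence the full set of constraints tight at $\overline{x}$ has rank $n$, so $\overline{x}$ is a basic solution — note $\overline{x}$ need not (and generally will not) be feasible for $Q_n$ since $\delta > \tfrac12$ forces $\overline{x}^\top\varepsilon = n\delta > 1$ for $n \ge 2$, but feasibility is not required for membership in $\B(Q'_n)$. This gives all $2^n$ points of $\{-\delta,\delta\}^n$.

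For the vertex count $|\V(Q'_n)| = 4n(n-1)$, I would first recall that the vertices of the uncropped cross-polytope $Q_n$ are $\pm\unit{i}$, and that intersecting with the box $[-\delta,\delta]^n$ with $\delta \in (\tfrac12,1)$ truncates $Q_n$: since each original vertex $\pm\unit{i}$ has a coordinate equal to $1 > \delta$, all original vertices are cut off, and the new vertices lie on edges of $Q_n$. The edges of $Q_n$ join $\sigma\unit{i}$ to $\tau\unit{j}$ for $i \ne j$ and signs $\sigma,\tau \in \{-1,1\}$; such an edge is the segment $\{\sigma s\,\unit{i} + \tau(1-s)\unit{j} : s \in [0,1]\}$. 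The box constraint first violated as we move along this segment from $\sigma\unit{i}$ is $|x_i| \le \delta$, i.e. $s = \delta$, giving the point $\sigma\delta\,\unit{i} + \tau(1-\delta)\unit{j}$. I would check that this point is indeed a vertex of $Q'_n$: it lies on the facet $x^\top y = 1$ of $Q_n$ for the sign pattern matching $\sigma$ in coordinate $i$ and $\tau$ in coordinate $j$ (and any signs elsewhere, but the relevant tight facet hyperplane together with $\sigma x_i = \delta$ and $\tau x_j = 1-\delta < \delta$... ) — more carefully, it is tight on the box facet $\{x_i = \sigma\delta\}$ and on $Q_n$'s facet $\{\sum_k y_k x_k = 1\}$ with $y_i = \sigma$, $y_j = \tau$; combined with the $n-2$ further box facets $x_k = \pm\delta$ it is not on, one still needs $n$ tight facets — so I would instead observe that each truncated edge of $Q_n$ becomes a vertex whenever the truncating hyperplane $x_i = \sigma\delta$ meets the edge in its relative interior, which happens iff $0 < \delta < 1$, always true here, and that distinct (edge, endpoint) pairs give distinct points because the point $\sigma\delta\,\unit{i} + \tau(1-\delta)\unit{j}$ determines $i$ (the coordinate of absolute value $\delta$), $j$ (the nonzero coordinate of absolute value $1-\delta$, nonzero since $\delta \ne 1$), and the signs $\sigma,\tau$. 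The counting is then: choose the ordered pair $(i,j)$ with $i\ne j$ in $n(n-1)$ ways, and the two signs $\sigma,\tau$ in $4$ ways, giving $4n(n-1)$ candidate vertices. I would then confirm that no other vertices arise — every vertex of $Q'_n$ lies on a face of $Q_n$ of dimension $\le 1$ intersected with box facets, and the full-dimensional / facet cases are excluded because $\delta < 1$ means the box facets $x_i = \pm\delta$ only touch $Q_n$ along its lower-dimensional faces; a short case analysis on how many box constraints are tight (exactly one, at a relative-interior point of an edge of $Q_n$) pins down all vertices.

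The main obstacle I anticipate is the vertex enumeration: one must be careful that cropping with the box does not create vertices that lie on the intersection of \emph{two or more} box facets with a face of $Q_n$, or that some of the $4n(n-1)$ candidate points coincide or fail to be extreme. The cleanest way around this is to exploit the symmetry group (signed permutations of coordinates) acting transitively on the candidate set, so it suffices to verify that one representative point, say $\delta\,\unit{1} + (1-\delta)\unit{2}$, is a vertex — exhibiting a linear functional uniquely maximized there, e.g. a slight perturbation of $x_1$ — and that a short local argument rules out any extreme point with two box coordinates simultaneously at $\pm\delta$ (such a point would have to lie on $Q_n$, but $\delta > \tfrac12$ is exactly what keeps $(\pm\delta,\pm\delta,0,\dots,0)$ strictly outside $Q_n$, so two tight box facets push out of $Q_n$). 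Once the representative is handled, symmetry and the distinctness observation above finish the count, and the lemma follows; combined with \cref{lem:hom-circuits} and the discussion preceding it, this yields the exponential gap of \cref{thm:counterex-exponential}.
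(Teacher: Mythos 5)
Your proposal is correct and follows essentially the same route as the paper's proof: both arguments hinge on $\delta<1$ forcing every vertex of $Q'_n$ to be the crossing of a single box facet with an edge of $Q_n$ (two per edge, hence $2\cdot 2n(n-1)=4n(n-1)$), on $\delta>\frac{1}{2}$ ruling out points of $Q_n$ with two coordinates of absolute value $\delta$ via the valid inequality $x_i+x_j\le 1$, and on the $n$ linearly independent tight box constraints at each (infeasible) point of $\{-\delta,\delta\}^n$ certifying membership in $\B(Q'_n)$. The only difference is presentational -- the paper states the exclusion of low-dimensional box faces as an upfront structural claim rather than as a concluding case analysis -- so no changes are needed beyond tightening the exposition.
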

\begin{proof}
We first argue that no face of $[-\delta,\delta]^n$ of dimension $n-2$ or less intersects $Q_n$. Indeed, let $F$ be a face of $[-\delta,\delta]^n$ with $\dim(F) \le n-2$. By symmetry, we may assume that $F \subset \{ x \in [-\delta,\delta]^n \colon x_1 = x_2 = \delta \}$. Then the inequality $x_1 + x_2 \le 1$, which is valid for $Q_n$, separates $F$ from $Q_n$ since $\delta > \frac{1}{2}$.

This means that each vertex of $Q'_n$ is contained in at most one facet of $[-\delta,\delta]^n$. In fact, it must be contained in exactly one: none of the vertices of $Q_n$ (which are the positive and negative unit vectors) is contained in $[-\delta,\delta]^n$ since $\delta < 1$. Hence, each vertex of $Q'_n$ is the intersection of exactly one facet of $[-\delta,\delta]^n$ with an edge of $Q_n$.
Observe that every edge of $Q_n$ intersects exactly two distinct facets of $[-\delta,\delta]^n$. Again, this is due to the choice of $\delta < 1$. Hence, the number of vertices of $Q'_n$ equals twice the number of edges of $Q_n$, i.e., $|\V(Q'_n)| = 4n(n-1)$.
Moreover, none of the vertices of $[-\delta,\delta]^n$ is a vertex of $Q'_n$ and for all $i \in [n]$, the inequalities $-\delta \le x_i \le \delta$ are facet-defining for $Q'_n$. Therefore, $\B(Q'_n) \supset \V([-\delta,\delta]^n) = \{-\delta,\delta\}^n$. \qed
\end{proof}

A careful analysis of the counterexamples in \cref{sec:counterexamples-min} shows that the constructions are, in fact, homogenizations, too: consider again the linear map $\pi_{n,m} \colon \R^m \rightarrow \R^n$ from \cref{sec:counterexamples-min}. Define a linear transformation of $\R^n$ which maps $x \in \R^n$ to the vector $x' \in \R^n$ defined by $x'_3 = \frac{1}{2} \sum_{i=1}^n x_i$ and $x'_i = x_i$ for all $i \ne 3$. Under this transformation, the cone $R_n = \pi_{n,m}(\R^m_{\ge 0})$ from \cref{lem:counterex-pi-orthant} can be viewed as the homogenization of some polytope $P \subset \R^{n-1}$ whose vertices are the nonzero column vectors of the matrix $\Pi_{n,m}$ after projecting out the third coordinate. This coordinate takes over the role of the homogeneous coordinate $t$ (recall that all nonzero column vectors of $\Pi_{n,m}$ satisfy $\sum_{i=1}^n x_i = 2$). Then $\zero$ is a basic solution of $P$ which is not a vertex. In the homogenization, $\zero$ yields the circuit $\unit{3}$ by \cref{lem:hom-circuits}.


\subsection{Fixed-Shape Partition Polytopes} \label{sec:partpoly}

All constructions seen so far may seem specifically designed so as to fail the inheritance of circuits. However, there do not only exist pathological counterexamples. We conclude this section with an example from combinatorial optimization that exhibits this undesirable behavior despite a number of favorable properties. Recall the clustering application and the definitions of the associated fixed-shape partition polytopes $P(X,k,\kappa)$ and their extensions $T(n,k,\kappa)$ from \cref{sec:motivation}. 

There are several reasons why one might hope that the set of circuits of $P(X,k,\kappa)$ is inherited from $T(n,k,\kappa)$. 
Most importantly, for any choice of $n,k,\kappa$ and $X$, edges of both $P(X,k,\kappa)$ and $T(n,k,\kappa)$ have a near-identical characterization in terms of the underlying application in the form of `cyclic exchanges' of items between clusters \cite{kw-68,for-03,hor-99,b-10,b-13,bv-19a}: a subset of clusters are ordered in a cycle, and one item from each cluster is transferred to the next along the cycle. Further, $T(n,k,\kappa)$ exhibits a number of interesting properties: it is a $0/1$ polytope whose constraint matrix is totally unimodular and in which all circuits appear as edge directions; in fact, any circuit walk in $T(n,k,\kappa)$ is an edge walk \cite{bv-17}. The special case $k=n$ and $\kappa_i=1$ for all $i\in [n]$ yields the $n$th Birkhoff polytope. Finally, the projection $\pi_X$ is highly symmetric, using the same $x^{(j)}$ in combination with $y_{ij}$ for all $i\in [k]$.

Despite the combination of these many favorable properties, somewhat surprisingly, new circuits may appear in the projection onto $P(X,k,\kappa)$, even for small $d,n$, and $k$.

\begin{lemma}\label{lem:partpoly} 
For all $n \ge 5$, there exist $k \in \N$, $\kappa = (\kappa_1,\dots,\kappa_k) \in \N^k$, and $X \subset \R^{n-2}$ with $|X| = \sum_{i=1}^k \kappa_i = n$ such that $\C(P(X,k,\kappa)) \not\subset \pi_X(\C(T(n,k,\kappa)))$, where $\pi_X \colon \R^{k \times n} \rightarrow (\R^d)^k$ is defined as above.
\end{lemma}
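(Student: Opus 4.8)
The key insight, already spelled out in the excerpt, is that a circuit which is not an edge direction appears in a projection precisely when a basic solution of the projected polytope fails to be a vertex. So I would aim to construct a small fixed-shape clustering instance $(X,k,\kappa)$ in which $P(X,k,\kappa)$ has a basic solution $\bar{c}$ (w.r.t.\ its \emph{minimal} inequality description) that is not a vertex, and such that the corresponding circuit of $P(X,k,\kappa)$ has no preimage among $\pi_X(\C(T(n,k,\kappa)))$. Equivalently — and this is the cleaner route — I would exhibit a circuit direction $g\in\C(P(X,k,\kappa))$ with small support in the facet description of $P$, which, when lifted back through $\pi_X$, would force a vector $y\in\ker$ of the transportation constraints whose $\pi_X$-image is a nonzero multiple of $g$; then show no such $y$ can be support-minimal in the relevant sense, so $g\notin\pi_X(\C(T(n,k,\kappa)))$.

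\textbf{Concretely, the steps I would carry out are as follows.} First, I would handle the base case $n=5$ by hand: pick $k=3$, $\kappa=(1,1,3)$ (or a similarly small choice with $\sum\kappa_i=5$), and a point configuration $X=\{x^{(1)},\dots,x^{(5)}\}\subset\R^{3}$ (so $d=n-2=3$) chosen generically enough that $P(X,k,\kappa)$ is full-dimensional in the appropriate affine subspace, yet structured so that two clustering vectors are non-adjacent in $P$ while their difference is dominated in support by a shorter dependence — exactly the mechanism from the proof of \cref{lem:counterex-pi-simplex}, where $2\unit{1}-\unit{2}-\unit{3}$ is not a circuit because $\unit{1}-\unit{2}$ has smaller support. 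The circuit $g$ to be exposed would be this shorter dependence (the analogue of $\unit{3}$): a difference of "virtual" cluster-vector moves that is realized as an intersection of translated facets of $P$ but is not any vertex difference. Second, I would verify $g\in\C(P(X,k,\kappa))$ using the geometric interpretation of circuits (intersections of translated facets), after pinning down the facet description of $P(X,k,\kappa)$ for this small instance — here I can invoke known facet descriptions of fixed-shape partition polytopes in low dimension from the cited literature, or compute directly. Third, I would show $g\notin\pi_X(\C(T(n,k,\kappa)))$: any $y\in\ker$ of the transportation equality system with $\pi_X(y)$ a multiple of $g$ corresponds to a circuit of $T$, which (by the cited result that all circuits of $T$ are cyclic exchanges / edge directions) is a signed cycle in the bipartite structure; pushing such a cyclic exchange through $\pi_X$ always yields a \emph{vertex} difference of $P$ up to scaling, never the strictly-shorter dependence $g$. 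Finally, I would bootstrap from $n=5$ to all $n\ge5$ by padding: add $n-5$ singleton clusters with distinct, generic data points far from the others, so that the new coordinates decouple via \cref{lem:cartesian}-type reasoning and the offending circuit survives in the larger polytope while the transportation extension's circuit set still cannot reach it.

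\textbf{The main obstacle I anticipate is the third step} — certifying that \emph{no} circuit of $T(n,k,\kappa)$ projects onto $g$. The transportation polytope has many circuits (all the cyclic exchanges), and I must rule out every one of them, not just the "obvious" candidates; the symmetry of $\pi_X$ (reusing $x^{(j)}$ across all $i$) actually helps here because it constrains which vectors in $(\R^d)^k$ can arise as images, but making this airtight requires a careful argument that the image of any signed cycle has support (in the facet description of $P$) at least as large as that of a vertex difference, hence cannot equal the support-minimal $g$. A secondary difficulty is simply pinning down a correct minimal facet description of $P(X,k,\kappa)$ for the chosen small instance, since no general inequality description is known; I would sidestep this by choosing $X$ so that $P(X,k,\kappa)$ is combinatorially a known polytope (e.g.\ affinely isomorphic to one of the minimal counterexamples of \cref{sec:counterexamples-min}, or to the cropped cross-polytope picture), letting me transfer the circuit analysis wholesale rather than recomputing it. If that transfer can be made to work, the lemma follows by essentially reusing \cref{lem:hom-circuits} together with the structural description of $T$'s circuits.
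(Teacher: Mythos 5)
Your primary route has a genuine gap at exactly the step you flag as the main obstacle: certifying that no circuit of $T(n,k,\kappa)$ projects onto the offending direction $g$. The mechanism you propose for this --- that pushing a cyclic exchange through $\pi_X$ ``always yields a vertex difference of $P$ up to scaling'' --- is not justified in general: circuits of $T$ are edge directions of $T$, but their $\pi_X$-images are differences of images of adjacent vertices of $T$, and those images need not be vertices of $P(X,k,\kappa)$, let alone adjacent ones. So even granting a correct minimal facet description of $P$ for your ad hoc instance ($k=3$, $\kappa=(1,1,3)$, ``generic'' $X$), the argument does not close; nor is it clear that such an instance has a non-edge circuit at all. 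Your fallback --- choose $X$ so that $P(X,k,\kappa)$ is affinely isomorphic to one of the minimal counterexamples of \cref{sec:counterexamples-min} --- points in the right direction, but controlling $P$ alone is not enough: as \cref{thm:bijection,thm:edge} emphasize, inheritance is a property of the triple $(P,Q,\pi)$, so you must also arrange that the pair $(T(n,k,\kappa),\pi_X)$ matches the known counterexample pair.

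The paper's proof does exactly that, fixing all three ingredients simultaneously: take $k=2$, $\kappa=(1,n-1)$, and $X=\V(P_{n-2})$, the vertex set of the minimal counterexample $P_{n-2}=\pi_{n-2,n-1}(S_{n-1})$ from \cref{lem:counterex-pi-simplex} (with one point placed at the origin), so $|X|=n$. With these cluster sizes, eliminating the variables $y_{2j}$ shows that $T(n,2,(1,n-1))$ is affinely isomorphic to the simplex $S_{n-1}$, and $P(X,2,(1,n-1))$ is affinely isomorphic to its projection onto the first block $c^{(1)}$, which is precisely $\pi_{n-2,n-1}(S_{n-1})=P_{n-2}$. \Cref{prop:bijection} then transfers the non-inheritance statement of \cref{lem:counterex-pi-simplex} verbatim, uniformly for all $n\ge 5$ --- no base case, no padding argument, and no separate analysis of the circuits of $T$. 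To repair your proposal, replace ``generic $X$'' and ``$P$ combinatorially known'' with this exact identification of $(P,T,\pi_X)$ with $(P_{n-2},S_{n-1},\pi_{n-2,n-1})$.
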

\begin{proof}
For $n \ge 5$, let $X := \V(P_{n-2}) \subset \R^{n-2}$ be the set of vertices of the polytope $P_{n-2} = \pi_{n-2,n-1}(S_{n-1})$ from \cref{lem:counterex-pi-simplex}, where we may assume without loss of generality that $x^{(n)} = \zero$ and the remaining $n-1$ vertices are labelled in arbitrary order. (For given $X$, the set of all possible clustering vectors is invariant under reordering the data points $x^{(j)} \in X$.)

Consider the fixed-shape partition polytope $P(X) := P(X,k,\kappa)$ for $k=2$ and cluster sizes $\kappa_1 = 1$ and $\kappa_2 = n-1$. $P(X)$ is the convex hull of all vectors $(c^{(1)},c^{(2)}) \in \R^{n-2} \times \R^{n-2}$ such that $c^{(1)} = \sum_{j=1}^{n-1} y_{1j} \cdot x^{(j)}$ and $c^{(2)} = \sum_{j=1}^{n-1} x^{(j)} - c^{(1)}$ for some vector $y \in \R^{2 \times n}$ in the corresponding transportation polytope $T := T(n,2,(1,n-1))$, which is described by
\begin{align*}
    \sum_{j=1}^n y_{1j} &= 1 \\
    y_{2j} &= 1-y_{1j} \qquad\forall j \in [n] \\
    y &\ge \zero
\end{align*}

Eliminating the variables $y_{2j}$, it is easy to see that $T$ and the simplex $S_{n-1}$ are affinely isomorphic. Moreover, $P(X)$ is affinely isomorphic to its projection onto the first half $c^{(1)}$ of the clustering vector, which equals $\pi_{n-2,n-1}(S_{n-1}) = P_{n-2}$. The statement then follows immediately from \cref{lem:counterex-pi-simplex,prop:bijection}. \qed
\end{proof}

Despite the negative statement of \cref{lem:partpoly}, we stress that there do exist classes of fixed-shape partition polytopes in which all circuits are inherited from the transportation-style extensions, even though this happens for one of the two trivial reasons stated in \cref{prop:edge,prop:bijection}.
For instance, $T(n,k,\kappa)$ is a fixed-shape partition polytope itself for any $n$ and $k$, using the standard unit vectors in $\R^n$ as item locations, as already observed in \cite{bv-17}. Similarly, suppose that we augment a given data set $X \in \R^d$ of size $|X|=n$ with the unit vectors in $\R^n$, i.e., we replace each item location $x^{(i)}$ with $(x^{(i)},\unit{i}) \in \R^d \times \R^n$ for all $i \in [n]$. Then the fixed-shape partition polytope resulting from this augmented embedding can equivalently be derived using the construction in \cref{cor:orth-proj}. In particular, the resulting polytope is affinely isomorphic to $T(n,k,\kappa)$.

\section{The Role of Projection Maps and Polyhedra for the Inheritance of Circuits} \label{sec:characterization}

In the previous section, we saw that there exist polyhedra that do not inherit all their circuit directions from an extension.
In this section, we explore the role that the individual `ingredients' of those counterexamples -- the original polyhedron $P$, the extension polyhedron $Q$, and the projection map $\pi$ from $Q$ to $P$ -- play for the inheritance of circuits.
Our discussion is driven by three natural questions, first stated in \cref{sec:outline}:
\begin{enumerate}[label=(Q\arabic*),align=left,leftmargin=1.5\parindent,labelwidth=1.5\parindent]
    \item \questionPi 
    \item \questionP  
    \item \questionQ  
\end{enumerate}

We provide a complete characterization of the maps for question \cref{qPi} in \cref{sec:inheritance-proj} and of the polyhedra for question \cref{qP} in \cref{sec:inheritance-poly}. As we will see, they correspond to restrictive properties that make the inheritance of circuits trivial. We further provide a partial answer to question \cref{qQ} in \cref{sec:inheritance-ext}. In \cref{sec:bestpossible}, we explain why our characterizations are best possible. We do so through the discussion of some combinations of polyhedra and maps that lead to an inheritance of circuits, but where neither of them exhibits the aforementioned properties.

\subsection{Inheritance Based on the Projection Map} \label{sec:inheritance-proj}

In \cref{prop:bijection}, we saw that linear isomorphisms essentially preserve the set of circuits. We first show that no other type of linear map guarantees inheritance for \emph{all} polyhedra, thus resolving \cref{qPi}.
\thmbijection*

Recall from \cref{thm:counterex-family} that in every dimension greater than 2, there are polyhedra that do not inherit their circuits from all extensions.
The key observation for proving \cref{thm:bijection} will be that, in any fixed dimension, the particular projection used to obtain \cref{thm:counterex-family} can be exchanged for any other one after a suitable linear transformation of the domain space. 

\begin{proof}[of \cref{thm:bijection}]
By \cref{prop:bijection}, it suffices to show the `if' part of the statement and we may assume that $\pi(\R^m) = \R^n$.
If $\pi$ is not injective, then $m > n$. By \cref{thm:counterex-family}, there exists a full-dimensional simple polytope $Q \subset \R^m$ and a linear map $\sigma \colon \R^m \rightarrow \R^n$ such that the polytope $P := \sigma(Q) \subset \R^n$ is full-dimensional, $\C(P) \not\subseteq \sigma(\C(Q))$, and the set of edge directions of $P$ is precisely the set $\C(P) \cap \sigma(\C(Q))$. 
Since $\dim(P) = n$, we have that $\sigma(\R^m) = \R^n = \pi(\R^m)$. Hence, there exists a linear transformation $\tau \colon \R^m \rightarrow \R^m$ such that $\pi = \sigma \circ \tau$.
Now consider the polytope $\widetilde{Q} := \tau^{-1}(Q)$. Clearly, $\widetilde{Q}$ is simple again with $\dim(\widetilde{Q}) = m$ and
\[
  \pi(\widetilde{Q}) = (\sigma \circ \tau \circ \tau^{-1}) (Q) = \sigma(Q) = P.
\]
Using \cref{prop:bijection}, we conclude that
\[
    \pi(\C(\widetilde{Q})) 
    = (\sigma \circ \tau) (\C(\widetilde{Q}))
    = \sigma(\C(Q)) \not\supset \C(P)
\]
and, thus,  $\C(P) \cap \pi(\C(\widetilde{Q})) = \C(P) \cap \sigma(\C(Q))$. \qed
\end{proof}

\subsection{Inheritance for all Extensions} \label{sec:inheritance-poly}

Next, we resolve \cref{qP} by showing that any polyhedron which inherits its circuits from {\em every} extension cannot have a circuit that is not an edge direction already.
\thmedge*

In fact, we will prove a slightly stronger statement that clearly implies \cref{thm:edge}:

\begin{theorem} \label{thm:single-edge}
Let $P \subset \R^n$ be a pointed polyhedron and $g \in \R^n \setminus \{\zero\}$. $g$ is an edge direction of $P$ if and only if $g \in \pi(\C(Q))$
for all polyhedra $Q \subset \R^m$ and all linear maps $\pi \colon \R^m \rightarrow \R^n$ with $\pi(Q) = P$.
\end{theorem}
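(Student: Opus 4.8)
The "only if" direction is already handled by \cref{prop:edge}: if $g$ is an edge direction of $P$, then for every extension $\pi(Q) = P$ there is an edge direction $f$ of $Q$ with $\pi(f) = g$, and $f \in \C(Q)$, so $g \in \pi(\C(Q))$. The content is the contrapositive of the "if" direction: if $g \in \C(P)$ is \emph{not} an edge direction of $P$, I must exhibit one specific extension $\pi \colon \R^m \to \R^n$ with $\pi(Q) = P$ such that $g \notin \pi(\C(Q))$. The natural candidate, already flagged in the remark following \cref{thm:counterex-family}, is the canonical \emph{simplex extension} of a pointed polyhedron from \cite{fkpt-13}: writing $P = \conv(\V(P)) + \rec(P)$, one takes $Q$ to be (a translate of) the polyhedron living over the generators, i.e.\ the preimage of $P$ under the map sending the standard simplex/orthant combination $\lambda$ to $\sum_{v \in \V(P)} \lambda_v v + \sum_{r} \mu_r r$. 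Concretely, I would take $Q$ to be $\hom(P)$-style: the cone/polytope in $\R^{|\V(P)| + (\#\text{rays})}$ whose circuits are, by \cref{lem:cartesian} (product of a simplex and an orthant) or directly, exactly the edge directions of $Q$ — because $Q$ is a simplex times a nonnegative orthant (or a single simplex if $P$ is bounded), and in such polyhedra every circuit is an edge direction. Then $\pi(\C(Q))$ is exactly the set of images of edge directions of $Q$, which are the $v - w$ for $v,w$ vertices of $P$ and the extreme ray directions of $P$ — i.e.\ differences of \emph{arbitrary} pairs of vertices together with ray directions, not just \emph{adjacent} ones.

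So the key step is: $\pi(\C(Q)) = \{\,\alpha(v-w) : v,w \in \V(P),\ \alpha \ne 0\,\} \cup \{\text{extreme ray directions of } P\}$. Granting this, I must show that a circuit $g \in \C(P)$ that is not an edge direction of $P$ does not lie in this set. It is not a ray direction of $P$ (ray directions are edge directions). The remaining danger is that $g$ could be parallel to $v - w$ for some non-adjacent pair of vertices $v, w$. Here is where I use that $g$ is a \emph{circuit}: I claim no circuit of $P$ can be a nonzero multiple of $v - w$ for a non-adjacent pair $v,w \in \V(P)$ unless it is already an edge direction. Indeed, if $g = v - w$ is a circuit, consider the segment $[w,v]$; since $g \in \C(P)$, a maximal step from $w$ in direction $g$ inside $P$ either reaches a vertex or the boundary, and the support-minimality of $Bg$ forces the whole segment $[w,v]$ to be a face of $P$ (this is essentially the geometric characterization of circuits recalled in \cref{sec:definitions}: $g$ is an edge direction for \emph{some} right-hand side, hence translated facets through $w$ intersect in the line through $w$ and $v$, and feasibility pins the intersection to exactly $[w,v]$). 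A segment between two vertices that is a face of $P$ is an edge of $P$, so $v,w$ are adjacent — contradiction. Therefore every element of $\pi(\C(Q))$ that lies in $\C(P)$ is an edge direction of $P$, and in particular $g \notin \pi(\C(Q))$.

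I expect the main obstacle to be making the bridge step rigorous, namely that for the simplex extension $\pi(\C(Q))$ really is \emph{precisely} the set of vertex-difference directions plus ray directions. Two things need care: (i) that $Q$ as constructed is pointed and has a description in which its circuits coincide with its edge directions — this follows because $Q$ is affinely a simplex times an orthant and one invokes \cref{lem:cartesian} together with the fact (via \cref{prop:edge} applied in the trivial direction, or a direct check) that simplices and orthants have only edge-direction circuits; and (ii) that the edge directions of $Q$ map \emph{onto} all vertex differences of $P$ — the edges of a simplex connect \emph{all} pairs of its vertices, and $\pi$ sends the $i$-th simplex vertex to the $i$-th vertex of $P$, so $\pi$ of an edge of the simplex is $v_i - v_j$ for an arbitrary pair; the orthant directions map to the ray directions of $P$; and mixed behavior is controlled since on the product the circuits are "pure" by \cref{lem:cartesian}. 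Once (i) and (ii) are in place, the argument above closes. A minor additional point is the purely unbounded or mixed case, where one should double-check that $Q$ is genuinely pointed and that $\rec(P)$ contributes only ray directions to $\pi(\C(Q))$ — but this is routine given \cref{lem:cartesian}.
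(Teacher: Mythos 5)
Your reduction to the simplex extension breaks down at the key claim that a circuit of $P$ which is parallel to $v-w$ for some pair of vertices $v,w\in\V(P)$ must already be an edge direction of $P$. This claim is false, and the paper itself contains a counterexample: for the polytope $P'_3$ of \cref{lem:counterex-pi-simplex-modified} (see \cref{fig:counterex-pi-simplex-modified}), the vector $\unit{3}$ is a circuit that is not an edge direction, yet $\unit{3}=(1,0,1)-(1,0,0)$ is the difference of two non-adjacent vertices. Consequently the simplex extension of $P'_3$ \emph{does} have a circuit projecting onto $\unit{3}$ --- indeed, the whole point of \cref{lem:counterex-pi-simplex-modified} is that $\C(P'_n)\subset\pi'(\C(S_m))$ even though $P'_n$ has non-edge circuits, i.e., the simplex extension can inherit everything. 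Your sketched justification of the claim is also not sound: support-minimality of $Bg$ only says that $g$ is an edge direction for \emph{some} right-hand side, and it does not force the segment $[w,v]$ to be a face of the actual polyhedron $P$.

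The paper's proof is organized around exactly this obstruction. It covers $\V(P)$ by a family $\mathcal{P}$ consisting of singletons together with, for each pair $\{u,v\}$ of vertices with $u-v\in\R g$, a small parallelogram $P_{\{u,v\}}$ contained in the minimal face of $P$ containing $u$ and $v$ (which is at least $2$-dimensional precisely because $g$ is not an edge direction), chosen so that no parallelogram has an edge in direction $g$ and so that distinct members of $\mathcal{P}$ are separated by hyperplanes whose normals are orthogonal to $g$. The extension is then Balas's disjunctive polyhedron $Q_{\mathcal{P}}$ from \cref{prop:balas}, whose circuits are characterized in \cref{lem:balas-circuits} as either circuits of a single member (padded with zeros) or differences of basic solutions of two distinct members; by construction neither type maps to a multiple of $g$. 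Some device of this kind is unavoidable for handling non-adjacent vertex pairs whose difference is parallel to $g$; the simplex extension alone cannot do it. (Your treatment of the ``only if'' direction via \cref{prop:edge} and of the unbounded case via a product with a nonnegative orthant and \cref{lem:cartesian} does match the paper and is fine.)
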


For any pointed polyhedron $P$ and a nonzero vector $g$ which is not among the edge directions of $P$, we construct an extension of $P$ none of whose circuits projects to $g$. The construction is based on a classical result of Balas \cite{bal-85,bal-98} on the union of polyhedra, which we recall next. We denote the union over a family $\mathcal{P}$ of sets as $\bigcup \mathcal{P}$.

\begin{proposition}[\hspace*{-0.15cm}\cite{bal-85,bal-98}] \label{prop:balas}
Let $P \subset \R^n$ be a polyhedron, and let $\mathcal{P} = \{ P_1,\dots,P_p \}$ be a family of nonempty polyhedra 
$P_i = \{ x \in \R^n \colon A^{(i)} x = b^{(i)}, B^{(i)} x \le d^{(i)} \}, i \in [p]$,
such that $P = \conv \left( \bigcup \mathcal{P} \right)$.
Consider the polyhedron $Q_{\mathcal{P}} \subset \R^p \times (\R^n)^p$ defined by the following linear system in variables $\lambda \in \R^p$ and $x^{(i)} \in \R^n$ for all $i \in [p]$:
\begin{align*}
  \lambda &\ge \zero  \\
  \sum_{i=1}^p \lambda_i &= 1  \tag{disj}\label{eq:balas4}\\
  A^{(i)} x^{(i)} &= b^{(i)} \lambda_i   \qquad\forall i \in [p] \\
  B^{(i)} x^{(i)} &\le d^{(i)} \lambda_i \qquad\forall i \in [p] 
\end{align*}
Then $P = \left\{ \sum_{i=1}^p x^{(i)} \colon (\lambda,x^{(1)},\dots,x^{(p)}) \in Q_{\mathcal{P}} \right\}$.
\end{proposition}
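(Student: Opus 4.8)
The statement to prove is Proposition~\ref{prop:balas}, the classical result of Balas on the convex hull of a union of polyhedra. I would approach the two claimed equalities separately, handling the easier inclusion first and then the harder one.

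\textbf{Setup and the easy direction.} Write $\Sigma(Q_{\mathcal{P}}) := \{ \sum_{i=1}^p x^{(i)} : (\lambda, x^{(1)}, \dots, x^{(p)}) \in Q_{\mathcal{P}} \}$. First I would show $\Sigma(Q_{\mathcal{P}}) \subseteq P = \conv(\bigcup \mathcal{P})$. Take any feasible $(\lambda, x^{(1)}, \dots, x^{(p)})$. For indices $i$ with $\lambda_i > 0$, the constraints $A^{(i)} x^{(i)} = b^{(i)} \lambda_i$ and $B^{(i)} x^{(i)} \le d^{(i)} \lambda_i$ rescale to say $x^{(i)}/\lambda_i \in P_i$, so $x^{(i)} = \lambda_i \cdot (x^{(i)}/\lambda_i)$ is $\lambda_i$ times a point of $P_i$. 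For indices with $\lambda_i = 0$ I need that $x^{(i)} = \zero$, or at least that $x^{(i)}$ is in the recession cone of $P_i$ so that it can be absorbed; here I would invoke that the $P_i$ are nonempty, pick a base point $\hat x^{(i)} \in P_i$, and note $x^{(i)} + \hat{x}^{(i)}$ still satisfies the scaled system with $\lambda_i$ replaced by $1$ only in the limit — more carefully, $x^{(i)}$ with $\lambda_i = 0$ satisfies $A^{(i)} x^{(i)} = \zero$, $B^{(i)} x^{(i)} \le \zero$, i.e.\ $x^{(i)} \in \rec(P_i)$. Then $\sum_i x^{(i)} = \sum_{i : \lambda_i > 0} \lambda_i (x^{(i)}/\lambda_i) + \sum_{i : \lambda_i = 0} x^{(i)}$ is a convex combination of points in $\bigcup \mathcal{P}$ plus a recession direction, hence lies in $\conv(\bigcup \mathcal{P}) = P$ since $P$ is closed under adding its own recession directions and $\rec(P_i) \subseteq \rec(P)$.

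\textbf{The harder direction.} Conversely I must show $P \subseteq \Sigma(Q_{\mathcal{P}})$. Given $x \in P = \conv(\bigcup \mathcal{P})$, by Carath\'eodory-type reasoning $x = \sum_{i} \mu_i y^{(i)} + r$ where $\mu_i \ge 0$, $\sum_i \mu_i = 1$, $y^{(i)} \in P_i$ (picking an arbitrary point of $P_i$ when $\mu_i = 0$), and $r \in \rec(P) = \sum_i \rec(P_i)$, so $r = \sum_i r^{(i)}$ with $r^{(i)} \in \rec(P_i)$. Setting $\lambda_i := \mu_i$ and $x^{(i)} := \mu_i y^{(i)} + r^{(i)}$, one checks $A^{(i)} x^{(i)} = \mu_i b^{(i)} = b^{(i)} \lambda_i$ and $B^{(i)} x^{(i)} \le \mu_i d^{(i)} = d^{(i)} \lambda_i$, and $\sum_i x^{(i)} = x$, so $x \in \Sigma(Q_{\mathcal{P}})$. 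The one subtlety is justifying that every $x \in \conv(\bigcup \mathcal{P})$ decomposes as a finite convex combination of points of the $P_i$ plus a recession vector; this follows from the finiteness of the family $\mathcal{P}$ together with the fact that each $P_i$ is a polyhedron (finitely generated), so $\conv(\bigcup \mathcal{P})$ is again a polyhedron and its points admit such a representation.

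\textbf{Main obstacle.} The delicate point — and the reason Balas's original treatment is careful — is the behavior at $\lambda_i = 0$: the constraint system at $\lambda_i = 0$ forces $x^{(i)}$ into $\rec(P_i)$ rather than to the origin, so the naive claim ``$\lambda_i = 0 \Rightarrow x^{(i)} = \zero$'' is false in the unbounded case, and one genuinely needs the recession-cone bookkeeping in both inclusions. I would state this explicitly and handle it via $\rec(P) = \sum_i \rec(P_i)$ (valid since $P = \conv(\bigcup\mathcal{P})$ with finitely many polyhedra). If one only wanted the statement for polytopes $P_i$, this difficulty disappears entirely and the proof is a one-line rescaling argument in each direction; since the proposition is stated for general polyhedra, the recession-cone step is the crux.
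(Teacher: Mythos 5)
Your argument is correct in substance; note that the paper itself gives no proof of this proposition \textemdash{} it is quoted directly from Balas \textemdash{} so there is no internal proof to compare against. Two remarks on the details. First, in the direction you call harder, the recession-cone bookkeeping is actually unnecessary: the hypothesis is $P=\conv\left(\bigcup\mathcal{P}\right)$ (not its closure), and the convex hull of a union of finitely many convex sets consists exactly of the points $\sum_{i}\mu_i y^{(i)}$ with $\mu_i\ge 0$, $\sum_i\mu_i=1$, $y^{(i)}\in P_i$; so you may take $r=\zero$ and set $x^{(i)}=\mu_i y^{(i)}$ directly. The recession vector would only be needed to reach points of $\mathrm{cl}\,\conv\left(\bigcup\mathcal{P}\right)\setminus\conv\left(\bigcup\mathcal{P}\right)$, which the hypothesis rules out. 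Relatedly, your parenthetical claim that $\conv\left(\bigcup\mathcal{P}\right)$ ``is again a polyhedron'' because the family is finite is false in general (the convex hull of a point together with a disjoint ray need not even be closed); here polyhedrality of $P$ is a hypothesis, not a consequence, and in any case the decomposition you need follows from the definition of the convex hull alone. Second, the place where the recession cones genuinely matter is the direction you call easy: for $\lambda_i=0$ the block constraints force only $x^{(i)}\in\rec(P_i)$ rather than $x^{(i)}=\zero$, and your argument via $\rec(P_i)\subseteq\rec(P)$ \textemdash{} valid because $P_i\ne\emptyset$, $P_i\subseteq P$, and $P$ is a closed convex set \textemdash{} is exactly the right fix. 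So your identification of the $\lambda_i=0$ case as the crux is accurate for that inclusion, but slightly misplaced for the other.
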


Next, we give a characterization of the circuits of the extension $Q_{\mathcal{P}}$ defined in \cref{prop:balas}.

\begin{lemma} \label{lem:balas-circuits}
Let $P \subset \R^n$ be a pointed polyhedron and let $\mathcal{P}$ and $Q_{\mathcal{P}}$ be defined as in \cref{prop:balas}.
Up to rescaling, the circuits of $Q_{\mathcal{P}}$ w.r.t.\ the system \cref{eq:balas4} are the nonzero vectors $(f,g^{(1)},\dots,g^{(p)}) \in \R^p \times (\R^n)^p$ for which one of the following holds:
\begin{myenumerate}
  \item $f = \zero$;
    $g^{(i)} \in \C(P_i)$ for some $i \in [p]$
    and $g^{(k)} = \zero$ for all $k \ne i$,
    \label{lem:balas-circuits-i}
  \item $f = \mathbf{e}_i - \mathbf{e}_j$ for some $i,j \in [p], i \ne j$;
    $g^{(i)} \in \B(P_i), g^{(j)} \in \B(P_j)$,
    and $g^{(k)} = \zero$ for all $k \ne i,j$.
    \label{lem:balas-circuits-ii}
\end{myenumerate}
\end{lemma}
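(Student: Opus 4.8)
The plan is to analyze directly the linear system \eqref{eq:balas4} defining $Q_{\mathcal{P}}$ and apply the definition of circuits. Writing the equality constraints of $Q_{\mathcal{P}}$ as $A'(\lambda,x^{(1)},\dots,x^{(p)}) = b'$ and the inequality constraints as $B'(\lambda,x^{(1)},\dots,x^{(p)}) \le d'$, the kernel of $A'$ consists precisely of those $(f,g^{(1)},\dots,g^{(p)})$ with $\sum_{i=1}^p f_i = 0$ and $A^{(i)} g^{(i)} = b^{(i)} f_i$ for all $i \in [p]$; the inequality rows $B'$ decompose blockwise as the constraints $\lambda_i \ge 0$ (acting on the $f_i$ coordinate only) together with $B^{(i)} x^{(i)} - d^{(i)} \lambda_i \le 0$ for each $i$. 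So for a candidate direction $(f,g^{(1)},\dots,g^{(p)}) \in \ker(A')$, the support of $B'(f,g^{(1)},\dots,g^{(p)})$ is the disjoint union over $i \in [p]$ of $\{\,i : f_i \ne 0\,\}$ (from the nonnegativity rows) and $\operatorname{supp}(B^{(i)} g^{(i)} - d^{(i)} f_i)$. The circuits are the directions for which this total support is inclusion-minimal among all nonzero kernel vectors. First I would observe that $Q_{\mathcal{P}}$ is pointed: its lineality space is trivial because each $P_i$ is pointed (so $\ker A^{(i)} \cap \ker B^{(i)} = \{\zero\}$) and the constraint $\sum \lambda_i = 1$ together with $\lambda \ge \zero$ pins down the $f$-block. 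This lets us invoke \cref{lem:basic-sol} for each $P_i$.

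Next I would split into the two cases according to whether $f = \zero$. If $f = \zero$, then $A^{(i)} g^{(i)} = \zero$ for all $i$, the nonnegativity rows contribute nothing to the support, and the total support is $\bigcup_i \operatorname{supp}(B^{(i)} g^{(i)})$. Minimality of this union, over all nonzero choices, forces exactly one block $g^{(i)}$ to be nonzero (since if two blocks were nonzero, zeroing out one strictly shrinks the support — using pointedness of the other $P_j$ to guarantee $g^{(j)} \ne \zero \Rightarrow B^{(j)} g^{(j)} \ne \zero$), and that block must be support-minimal in $\{B^{(i)} y : y \in \ker A^{(i)} \setminus\{\zero\}\}$, i.e.\ $g^{(i)} \in \C(P_i)$; this is case \cref{lem:balas-circuits-i}. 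If $f \ne \zero$, then since $\sum f_i = 0$ the support $\{i : f_i \ne 0\}$ has size at least $2$; I claim a circuit has exactly $f = \mathbf{e}_i - \mathbf{e}_j$ up to scaling. Indeed, if three or more $f_i$ were nonzero, one can produce a nonzero kernel vector with strictly smaller $f$-support — take indices $i,j$ with $f_i, f_j \ne 0$, rescale, and for the blocks use basic solutions of $P_i, P_j$ whose slack-supports are minimal (by \cref{lem:basic-sol}), setting all other blocks to $\zero$; this yields strictly smaller total support. For the two-index case $f = f_i(\mathbf{e}_i - \mathbf{e}_j)$, the support is $\{i,j\} \cup \operatorname{supp}(B^{(i)} g^{(i)} - d^{(i)} f_i) \cup \operatorname{supp}(B^{(j)} g^{(j)} - d^{(j)} f_j)$, and since $f_i, f_j$ are forced to be nonzero (hence the index set $\{i,j\}$ cannot shrink), minimality reduces to minimizing each slack-support separately; after rescaling so that $f_i = 1$, this is exactly the condition $g^{(i)} \in \B(P_i), g^{(j)} \in \B(P_j)$ by \cref{lem:basic-sol}. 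That gives case \cref{lem:balas-circuits-ii}.

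Conversely, I would check that every vector of the stated forms is genuinely a circuit, i.e.\ that its support is minimal: for \cref{lem:balas-circuits-i} this is immediate from the definition of $\C(P_i)$ together with the decoupling of the blocks (any kernel vector whose $B'$-support is contained in $\{$the slack rows of block $i\}$ must have $f = \zero$ and all other $g^{(k)} = \zero$, then support-minimality within block $i$ applies), and for \cref{lem:balas-circuits-ii} one uses the characterization of $\B(P_i)$ from \cref{lem:basic-sol}: any kernel vector dominated by $(f,g^{(1)},\dots,g^{(p)})$ in $B'$-support must have $f$-support inside $\{i,j\}$ and sum zero, hence be a multiple of $\mathbf{e}_i - \mathbf{e}_j$, and then the block arguments pin it down. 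The main obstacle I anticipate is the bookkeeping in the $f \ne \zero$ case — specifically, arguing cleanly that a circuit cannot have three or more nonzero $f$-coordinates without constructing an explicit strictly-smaller competitor, and making sure the "forced nonzero" claim for $f_i, f_j$ (which relies on $b^{(i)} \ne \zero$ or on the structure forcing $g^{(i)} \ne \zero$ when $f_i = 0$) is handled, including the degenerate possibility that some $P_i$ is a single point or a cone. These edge cases should be absorbable by consistently using pointedness of the $P_i$ and \cref{lem:basic-sol}, but they are where the argument needs care rather than a slogan.
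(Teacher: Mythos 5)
Your overall strategy coincides with the paper's: analyze the block structure of the system \cref{eq:balas4}, split on whether $f=\zero$, and combine \cref{lem:basic-sol} with pointedness of the $P_i$ (your $f=\zero$ case re-derives what the paper gets by citing \cref{lem:cartesian}, and your converse verification is correct and in fact more explicit than the paper's). However, one step does not work as written: the argument that a circuit cannot have three or more nonzero $f$-coordinates. Your competitor uses \emph{fresh} basic solutions of $P_i$ and $P_j$ ``whose slack-supports are minimal,'' and you claim this yields strictly smaller total support. But support-minimality of $B^{(i)}h^{(i)}-d^{(i)}$ for $h^{(i)}\in\B(P_i)$ only says that no element of $\{B^{(i)}y-d^{(i)}\colon A^{(i)}y=b^{(i)}\}$ has support \emph{strictly contained} in it; it does not place $\operatorname{supp}(B^{(i)}h^{(i)}-d^{(i)})$ inside $\operatorname{supp}(B^{(i)}g^{(i)}-d^{(i)}f_i)$. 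Your competitor may therefore hit slack rows that the original circuit does not, and the inclusion of supports needed to contradict circuit-minimality fails.

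The repair is exactly the paper's move: take the competitor $\bigl(\unit{i}-\unit{j},\ \tfrac{1}{f_i}g^{(i)},\ -\tfrac{1}{f_j}g^{(j)},\ \zero,\dots,\zero\bigr)$ built from the original circuit's \emph{own} blocks, rescaled. It lies in the kernel of the equality system, and since $\operatorname{supp}\bigl(\tfrac{1}{f_i}(B^{(i)}g^{(i)}-d^{(i)}f_i)\bigr)=\operatorname{supp}(B^{(i)}g^{(i)}-d^{(i)}f_i)$, its total support is contained in the original's while the $f$-part strictly shrinks whenever $|\operatorname{supp}(f)|\ge 3$ or some $g^{(k)}\ne\zero$ for $k\notin\{i,j\}$; support-minimality then forces $\operatorname{supp}(f)=\{i,j\}$ and, via pointedness of $P_k$, $g^{(k)}=\zero$ for $k\ne i,j$ in one stroke. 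Alternatively, your construction can be salvaged by choosing $h^{(i)}\in\B(P_i)$ with $\operatorname{supp}(B^{(i)}h^{(i)}-d^{(i)})\subseteq\operatorname{supp}(B^{(i)}(g^{(i)}/f_i)-d^{(i)})$, which exists by a finiteness argument, but that is an additional step you would need to state and justify. The rest of your outline (the two-index case and both implications) goes through.
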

\begin{proof}
Let $(f,g^{(1)},\dots,g^{(p)}) \in \R^p \times (\R^n)^p$ be a circuit of $Q_{\mathcal{P}}$. If $f = \zero$, then $(g^{(1)},\dots,g^{(p)}) \in \C(P_1 \times \dots \times P_p)$. Statement \cref{lem:balas-circuits-i} immediately follows from an inductive application of \cref{lem:cartesian}.

Now suppose that $f \ne \zero$. Since $\sum_{i=1}^p f_i = 0$, $f$ must be supported in at least two components, say, $f_1 \ne 0$ and $f_2 \ne 0$. We claim that these are the only nonzero components of $f$, and that $g^{(3)} = \dots = g^{(p)} = \zero$. 
Then, after rescaling, we may assume that $f_1 = -f_2 = 1$, and statement \cref{lem:balas-circuits-ii} follows from \cref{lem:basic-sol}.
In order to prove the claim, observe that the vector $(\unit{1} - \unit{2}, \frac{1}{f_1} g^{(1)}, -\frac{1}{f_2} g^{(2)}, \zero, \dots, \zero) \in \R^p \times (\R^n)^p$ belongs to $\C(Q_{\mathcal{P}})$, too. Then $f$ and $\unit{1}-\unit{2}$ must have the same support. Further, by support-minimality of $B^{(k)} g^{(k)} - d^{(k)} f_k$, we cannot have that $g^{(k)} \ne \zero$ for some $k \ge 3$ as all $P_k$ are pointed. \qed
\end{proof}

We are now ready to prove the main result of this section.

\begin{proof}[of \cref{thm:single-edge}]
The `only if' part immediately follows from \cref{prop:edge}. For the converse implication, suppose that $g$ is not an edge direction of $P$. We first show the statement for the case that $P$ is a polytope.

Let $\mathcal{U} := \{ \{u,v\} \colon u,v \in \V(P),\, g \in \R (u-v) \}$ be the set of unordered pairs $\{u,v\}$ of vertices of $P$ whose difference is in direction of $g$ (possibly $\mathcal{U} = \emptyset$).
Observe that the pairs in $\mathcal{U}$ are pairwise disjoint: if $\{u,v\},\{v,w\} \in \mathcal{U}$ then $u,v,w$ are collinear. Since all three of them are vertices, it follows that $u=w$.
For every pair $\{u,v\} \in \mathcal{U}$, let $F_{\{u,v\}}$ be the minimal face of $P$ containing both $u$ and $v$. Since $u-v$ is not an edge direction of $P$, we have $\dim(F_{\{u,v\}}) \ge 2$. Hence, there exists a vector $z \in \R^n \setminus \{\zero\}$ orthogonal to $u-v$ such that $\frac{u+v}{2} \pm \varepsilon z \in F_{\{u,v\}}$ for some small $\varepsilon > 0$. This means that the parallelogram 
\[
  P_{\{u,v\}} := \conv \left\{ u,v,\; \frac{u+v}{2} + \varepsilon z,\; \frac{u+v}{2} - \varepsilon z \right\}
\]
is contained in $F_{\{u,v\}}$.

Moreover, for all $u,v \in \V(P)$ with $u \ne v$ and $\{u,v\} \notin \mathcal{U}$, there exist $a \in \R^n \setminus \{\zero\}, \beta \in \R$ such that $a^\top g = 0$ and $a^\top u < \beta < a^\top v$, i.e., the hyperplane $\{ x \in \R^n \colon a^\top x = \beta \}$ strictly separates $u$ and $v$. If $\varepsilon$ is sufficiently small, then also $P_{\{u,v\}}$ and $P_{\{u',v'\}}$, and $P_{\{u,v\}}$ and $w$ can be strictly separated by a hyperplane whose normal vector is orthogonal to $g$, for any distinct $\{u,v\},\{u',v'\} \in \mathcal{U}$ and $w \in \V(P) \setminus \bigcup \mathcal{U}$.

Now define
\[
  \mathcal{P} := 
    \left\{ P_{\{u,v\}} \colon \{u,v\} \in \mathcal{U} \right\}
    \cup 
    \left\{ \{v\} \colon v \in \V(P) \setminus \bigcup \mathcal{U} \right\}.
\]
Since $\mathcal{P}$ is a family of polytopes (singletons and parallelograms) contained in $P$ which covers $\V(P)$, we have that $P = \conv \left( \bigcup \mathcal{P} \right)$. For this family $\mathcal{P}$, we consider the extension $Q_{\mathcal{P}}$ as defined in \cref{prop:balas}. We claim that under the projection given in \cref{prop:balas}, none of the circuits of $Q_{\mathcal{P}}$ maps to a multiple of $g$.
By \cref{lem:balas-circuits}, the circuits of $Q_{\mathcal{P}}$ either map to \cref{lem:balas-circuits-i} edge directions of some member of the family $\mathcal{P}$ or to \cref{lem:balas-circuits-ii} (scaled) differences of two vertices of different members of $\mathcal{P}$. This is because $\dim(Q) \le 2$ for all $Q \in \mathcal{P}$ and all basic solutions of parallelograms are vertices.
By construction, none of the parallelograms $P_{\{u,v\}}$ has an edge in direction $g$, ruling out case \cref{lem:balas-circuits-i}. For case \cref{lem:balas-circuits-ii}, recall that any two distinct members of $\mathcal{P}$ can be strictly separated by some hyperplane whose normal vector is orthogonal to $g$. We conclude that $g$ is not inherited from $Q_{\mathcal{P}}$.

Now suppose that $P$ is unbounded. Then $P = P' + \rec(P)$ where $P' := \conv(\V(P))$ and $\rec(P)$ denotes the recession cone of $P$. We first define an extension for each Minkowski summand individually and then combine the two. 
Indeed, since the first summand $P'$ is a polytope, the first part of the proof yields an extension $Q_{\mathcal{P}'}$ of $P'$ none of whose circuits is sent to $g$. 
Suppose that the second summand $\rec(P)$ is generated by $q$ extreme rays in directions $\{ r^{(1)},\dots,r^{(q)} \} \subset \R^n$.
Then it is the image of the nonnegative orthant $\R^q_{\ge 0}$ under the map $\R^q \ni y \mapsto \sum_{i=1}^q y_i r^{(i)} \in \R^n$. By assumption, $g$ is not a multiple of $r^{(i)}$ for any $i \in [q]$. Now consider the polyhedron $Q_{\mathcal{P}'} \times \R^q_{\ge 0}$. It is an extension of $P$, where the corresponding projection first maps $Q_{\mathcal{P}'} \times \R^q_{\ge 0}$ to $P' \times \rec(P)$ and then applies the map $(x,y) \mapsto x+y$. By \cref{lem:cartesian}, $g$ is not inherited from $Q_{\mathcal{P}'} \times \R^q_{\ge 0}$ under this combined map. This concludes the proof. \qed
\end{proof}

We stress that the extension $Q_{\mathcal{P}}$ constructed in the above proof is not necessarily given by an irredundant system if we follow \cref{prop:balas}. However, for the purpose of proving a negative result about the \emph{non}-inheritance of a particular direction, this is not a restriction.

Before we focus on the last of the three ingredients, the extension polyhedron $Q$, let us remark that the simplex extension that we saw in \cref{lem:counterex-pi-simplex} is, in fact, a special case of the more general extension $Q_{\mathcal{P}}$ used in the proof of \cref{thm:edge}: for a polytope $P$ and the decomposition $\mathcal{P} := \{ \{v\} \colon v \in \V(P) \}$, the polyhedron $Q_{\mathcal{P}}$ is affinely isomorphic to the simplex $S_{|\V(P)|-1}$.
\Cref{prop:balas} also generalizes another result from \cref{sec:counterexamples}: let $P$ be a polytope and consider $\mathcal{P} := \{ \{\zero\}, \{1\} \times P \}$; then $Q_{\mathcal{P}}$ as defined in \cref{prop:balas} equals $\{ (t,x) \in \hom(P) \colon t \le 1 \}$.

\subsection{(No) Inheritance Based on the Extension Polyhedron} \label{sec:inheritance-ext}

In \cref{sec:counterexamples-min}, we saw that there exist polyhedra -- simplices, simplicial cones, and hypercubes in dimension 4 and greater -- that can be projected in such a way that not all circuits of the image polyhedron are inherited from the original one. In this section, we prove that these polyhedra can essentially be exchanged for any other polyhedron $Q$ (of the same dimension), provided that $Q$ has a non-degenerate vertex.
\nondegenerate*

Before we give a detailed proof of this result, let us take a closer look at the proofs of \cref{lem:counterex-pi-orthant,lem:counterex-pi-zonotope,lem:counterex-pi-simplex} and identify a common theme: all polyhedra that we projected from in \cref{sec:counterexamples-min} have a non-degenerate vertex at the origin $\zero$, and their inner cone at $\zero$ equals the nonnegative orthant. So in any fixed dimension, they are all identical \emph{locally} at $\zero$. We then applied a carefully chosen linear projection map which preserves this local resemblance. This allowed us to always generate a particular circuit direction $\unit{3}$, for which we were then able to establish non-inheritance. In this last step, however, knowledge of the set of circuits of the extension polyhedron was crucial. This will be the major technical challenge when applying the above proof strategy to an arbitrary polyhedron $Q$: neither do we know the other facets of $Q$ that are not incident with $\zero$ nor is $\C(Q)$ given explicitly.
We address this challenge by defining an infinite family of linear projections such that every member of the family maps $Q$ to a polyhedron with vertex $\zero$ in which the non-inherited circuit direction $\unit{3}$ from the results in \cref{sec:counterexamples-min} still appears as a circuit. Moreover, the family will have the property that no nonzero vector is sent to $\unit{3}$ (or a multiple thereof) under more than one of the projections in the family. Since our family is \emph{infinite} but $\C(Q)$ is \emph{finitely} generated, there must be some member of the family which does not send any of the circuits of $Q$ to $\unit{3}$. This will be the map that we can apply to $Q$ and obtain the same non-inheritance result as in \cref{sec:counterexamples-min}. 
The remainder of this section is dedicated to the proof details.

\begin{proof}[of \cref{thm:counterex-nondegenerate}]
After an affine transformation, we may assume that $Q$ is full-dimensional, $\zero$ is a non-degenerate vertex of $Q$, and the inner cone of $Q$ at $\zero$ equals $\R^m_{\ge 0}$. 
For all $\alpha \in \N \setminus \{1\}$, we define the matrix
\[
    \arraycolsep=3.5pt
    \Pi_\alpha := \left( \begin{array}{cccc|ccc}
    \alpha & 1 & 0 & 0 &&&\\
    0 & 0 & \alpha & 1 & \zero &&\\
    0 & \alpha-1 & 0 & \alpha-1 &&&\\
    \hline
    &&& &&&\\
    & \zero && & \alpha\, \mathbf{I}_{n-3} &&\\
    &&& &&&\\
    \end{array}
    %
    \right)
    \in \R^{(m-1) \times m}
\]
and a corresponding linear map $\pi_\alpha \colon \R^m \rightarrow \R^{m-1}, x \mapsto \Pi_\alpha x$. Note that $\pi_2 = \pi_{m-1,m}$ where $\pi_{m-1,m}$ is the projection used in \cref{sec:counterexamples}.

Consider the cone $\pi_\alpha(\R^m_{\ge 0}) \subset \R^{m-1}$. It is defined by the $m$ inequalities
\begin{align*}
  x &\ge \zero \\
  (\alpha-1) x_1 + (\alpha-1) x_2 - x_3 &\ge 0
\end{align*}
all of which are facet-defining. This can be seen using the same arguments as in the proof of \cref{lem:counterex-pi-orthant}. In fact, the cone above is obtained from $\pi_2(\R^m_{\ge 0})$ by rescaling it along the third coordinate. In particular, $\unit{3} \in \C(\pi_\alpha(\R^m_{\ge 0}))$. Now consider $\pi_\alpha^{-1}(\R \unit{3}) =: K_\alpha$, i.e., $K_\alpha$ is the set of all vectors in $\R^m$ that $\pi_\alpha$ sends to a multiple of $\unit{3} \in \R^{m-1}$. 
Since $\Pi_\alpha$ has full row rank, $K_\alpha$ is a two-dimensional linear subspace of $\R^m$, spanned by the vectors $\alpha \unit{2} - \unit{1}$ and $\alpha \unit{4} - \unit{3}$.
For any $\alpha \ne \beta$, we have that $K_\alpha \cap K_\beta = \{\zero\}$ because the four basis vectors of $K_\alpha$ and $K_\beta$ are linearly independent. Now recall from the definition of the set of circuits that $\C(Q)$ consists of a \emph{finite} number of one-dimensional linear subspaces of $\R^m$. Hence, there must exist some $\alpha \ne 1$ such that $\C(Q) \cap K_\alpha = \emptyset$. For this choice of $\alpha$, we conclude that $\unit{3} \notin \pi_\alpha(\C(Q))$ while $\unit{3} \in \C(\pi_\alpha(\R^m_{\ge 0})) \subset \C(\pi_\alpha(Q))$, where the last inclusion follows from the fact that $\pi_\alpha(\R^m_{\ge 0})$ is the inner cone of $\pi_\alpha(Q)$ at the vertex $\zero$. \qed
\end{proof}

\subsection{Inheritance in Nontrivial Instances} \label{sec:bestpossible}

\Cref{thm:bijection,thm:edge,thm:counterex-nondegenerate} imply that, beyond the trivial cases that we saw in \cref{sec:prelim}, inheritance of circuits cannot be a property of a polyhedron, of a specific extension polyhedron, or of the map between the two by itself. We conclude our discussion by showing that there do exist instances for which a combination of these three ingredients leads to the desired inheritance of circuits while each individual ingredient does not satisfy the restrictive assumptions of \cref{thm:bijection,thm:edge}. In this sense, \cref{thm:bijection,thm:edge} are the best possible statements.

\begin{lemma} \label{lem:counterex-pi-simplex-modified}
For all $m,n \in \N$ with $n \ge 3$ and $m \ge n+3$, there exist full-dimensional polytopes $P \subset \R^n, Q \subset \R^m$ and a linear map $\pi \colon \R^m \rightarrow \R^n$ with $\pi(Q) = P$ such that $\C(P) \subset \pi(\C(Q))$, $P$ and $Q$ are not linearly isomorphic, and not all circuits of $P$ are edge directions.
\end{lemma}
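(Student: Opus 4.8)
The plan is to exhibit, for each valid pair $(m,n)$, a concrete polytope $P$ together with an extension $Q$ and a projection $\pi$ that witness all three required properties simultaneously, while neither $P$ nor $\pi$ nor the pair $(P,Q)$ alone satisfies the restrictive hypotheses of \cref{thm:bijection} or \cref{thm:edge}. The natural starting point is the counterexample machinery from \cref{sec:counterexamples-min}: take $P := \pi_{n,m'}(S_{m'})$ for a suitable $m' < m$ (say $m' = n+1$), which by \cref{lem:counterex-pi-simplex} is a full-dimensional polytope possessing a circuit, namely $\unit{3}$, that is \emph{not} an edge direction. This secures the clause ``not all circuits of $P$ are edge directions'' for free, and it also means that for $P$ to inherit \emph{all} of its circuits we genuinely need $Q$ and $\pi$ to cooperate rather than relying on \cref{prop:edge}.

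The core of the construction is then to build an extension $Q$ of $P$ that is \emph{large enough} to have the missing circuit $\unit{3}$ in the image of $\C(Q)$ — and in fact all of $\C(P)$ — but not a linear isomorph of $P$. The cleanest way to get $\C(P) \subseteq \pi(\C(Q))$ without $Q \cong P$ is to use one of the canonical extensions whose circuits we have already characterized: I would take $Q := Q_{\mathcal{P}}$ from \cref{prop:balas} for the trivial vertex decomposition $\mathcal{P} := \{\{v\} : v \in \V(P)\}$, so that $Q$ is affinely isomorphic to the simplex $S_{|\V(P)|-1}$ (as remarked just after the proof of \cref{thm:edge}). By \cref{lem:balas-circuits} (or directly, since a simplex's only basic solutions relevant here are its vertices), the circuits of $Q$ project exactly onto the set of all difference vectors of pairs of vertices of $P$ together with the edge directions — and crucially, \emph{every} element of $\C(P)$ arises this way: by the geometric description of circuits in \cref{sec:definitions}, each circuit of $P$ is a one-dimensional intersection of translated facets of $P$, hence lies in the direction $u - v$ of some pair of vertices $u,v$. (For $\unit{3}$ specifically this is visible in \cref{fig:counterex-pi-orthant}: it is the direction $2\unit{1} - \unit{2} - (2\unit{2} - \unit{1})$, i.e.\ the difference of the two non-adjacent vertices, which the simplex extension does contain.) Thus $\C(P) \subseteq \pi(\C(Q))$, while $Q$ has $|\V(P)| > n+1$ vertices and so cannot be linearly isomorphic to the $n$-dimensional polytope $P$ — giving the ``not linearly isomorphic'' clause. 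To reach the prescribed dimension $m \geq n+3$ exactly, I would either pad $P$ by a suitable product/prism before homogenizing, or more simply take $P$ to be $\pi_{n,m'}(S_{m'})$ with $m'$ chosen so that $|\V(P)| - 1 = m$; since $|\V(\pi_{n,m'}(S_{m'}))| = n+2$ is fixed, padding is the robust route — e.g.\ replace $P$ by $P \times \Delta$ for a simplex $\Delta$ of the right dimension, using \cref{lem:cartesian} to check that the product still has a non-edge-direction circuit and that inheritance is preserved.

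The main obstacle I anticipate is the bookkeeping that makes $\C(P) \subseteq \pi(\C(Q))$ genuinely hold for \emph{all} circuits of $P$, not just the conspicuous $\unit{3}$: one must be sure that no circuit of $P$ fails to be realized as a vertex-difference direction, and here the irredundancy/minimality conventions matter, since $Q_{\mathcal{P}}$ as produced by \cref{prop:balas} need not be irredundant and passing to a minimal description could in principle shrink $\C(Q)$. I would handle this by appealing to the geometric characterization of circuits (every circuit direction of $P$ is a direction $u-v$ with $u,v \in \V(P)$, as $P$ is a polytope and a circuit is a one-dimensional intersection of translated facets whose endpoints, after translation, can be taken at vertices) and by choosing the parallelogram-free decomposition so that the relevant difference directions survive; alternatively, one avoids the issue entirely by taking $Q$ to be the homogenization-style or hypercube-style extension of a \emph{zonotope refinement} of $P$. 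A secondary, lesser obstacle is certifying non-isomorphism cleanly — but a vertex count (or a facet count) suffices, since an affine isomorphism preserves both, and our $Q$ has strictly more vertices than any $n$-dimensional polytope with $n+2$ vertices can. Assembling these pieces yields the claimed family for every $(m,n)$ with $n \geq 3$ and $m \geq n+3$. \qed
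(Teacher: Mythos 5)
There is a genuine gap at the heart of your construction. You take $P := \pi_{n,m'}(S_{m'})$ from \cref{lem:counterex-pi-simplex} and pair it with the simplex extension $Q_{\mathcal{P}}$ for $\mathcal{P} = \{\{v\} : v \in \V(P)\}$, whose circuits project precisely onto the vertex-difference directions of $P$. But for this choice of $P$ the circuit $\unit{3}$ is provably \emph{not} a vertex-difference direction -- that is exactly what the proof of \cref{lem:counterex-pi-simplex} establishes ("$P_n$ has no pair of vertices that only differ in the third coordinate, and hence $\unit{3} \notin \pi(\C(S_m))$"). Your justification that "each circuit of $P$ is a one-dimensional intersection of translated facets of $P$, hence lies in the direction $u-v$ of some pair of vertices" is a non sequitur: translated facets need not meet at points that are vertices of $P$, and circuits that are not vertex differences are the entire reason the paper's negative results exist. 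Your parenthetical computation for $\unit{3}$ is also incorrect: $(2\unit{1}-\unit{2}-\unit{3}) - (2\unit{2}-\unit{1}-\unit{3}) = 3\unit{1}-3\unit{2}$, a multiple of $\unit{1}-\unit{2}$, not of $\unit{3}$. So with your $P$ and $Q$, the inclusion $\C(P) \subseteq \pi(\C(Q))$ fails for the very circuit you need, and no amount of padding by products or prisms repairs this, since \cref{lem:cartesian} shows products only contribute padded circuits of the factors.

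The fix -- and what the paper actually does -- is to change the polytope $P$ itself, not the extension. The paper defines a new matrix $\Pi'_{n,m}$ whose columns include, e.g., both $(1,0,0)$ and $(1,0,1)$ as vertices of $P'_n := \pi'(S_m)$, so that the problematic circuit direction $\unit{3}$ \emph{does} arise as a difference of two vertices; one then verifies by an inductive facet count that $\C(P'_n) = \C(R_n)$ and that every circuit direction appears as a column of $\Pi'_{n,m}$ or a difference of two columns, whence $\C(P'_n) \subset \pi'(\C(S_m))$. Your non-isomorphism argument via vertex/facet counts is fine, and your overall strategy (simplex extension, plus a $P$ with a non-edge circuit) is the right shape; the missing idea is that $\V(P)$ must be engineered so that all circuits of $P$ are realized as vertex differences, which your chosen $P$ does not satisfy.
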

\begin{proof}
We again modify the construction from \cref{lem:counterex-pi-simplex}. For $n \ge 3$ and $m \ge n+3$, we define the matrix
\[
    \arraycolsep=2.5pt
    \Pi'_{n,m} := \left( \begin{array}{cccccc|ccc|}
    1 & 1 & 2 & 0 & 0 & 0 &&&\\
    0 & 0 & 0 & 1 & 1 & 2 & \zero &&\\
    0 & 1 & 1 & 0 & 1 & 1 &&&\\
    \hline
    &&&&& &&&\\
    && \zero &&& & \mathbf{I}_{n-3} &&\\
    &&&&& &&&\\
    \end{array}
    \begin{array}{ccc}
    & \zero &
    \end{array}\right)
    \in \R^{n \times m}
\]
and let $\pi' \colon \R^m \rightarrow \R^n$ be the linear map $x \mapsto \Pi'_{n,m} x$.
Consider the polytope $P'_n := \pi'(S_m)$ (see \cref{fig:counterex-pi-simplex-modified}). We claim that $P'_n$ is defined by the following irredundant system of inequalities:
\begin{align*}
  x &\ge \zero \\
  x_3 &\le 1 \\
  x_1 + x_2 - x_3 &\ge 0 \\
  x_1 + x_2 - x_3 &\le 1
\end{align*}
All inequalities are valid for $P'_n$. To see that they are facet-defining, we proceed by induction on $n$, similar to the proof of \cref{lem:counterex-pi-simplex}. For the case $n = 3$, we refer to \cref{fig:counterex-pi-simplex-modified}. If $n \ge 4$, then $\{ x \in P'_n \colon x_n = 0 \}$ is a face of $P'_n$ which contains all column vectors of $\Pi'_{n,m}$ but $\unit{n}$. Hence, it is isomorphic to $P'_{n-1}$ and therefore is a facet of $P'_n$. 

In particular, since $P'_n$ has more than $n+1$ facets, it is not a simplex and, thus, cannot be isomorphic to $S_m$.
Consider again the cone $R_n \subset \R^n$ defined in the proof of \cref{lem:counterex-pi-orthant}. It is easy to see that $\C(P'_n) = \C(R_n)$. Hence, after rescaling, every circuit direction of $P'_n$ appears as one of the column vectors of $\Pi'_{n,m}$ or as the difference of two of them. This implies that $\C(P'_n) \subset \pi'(\C(S_m))$.
Further, $\unit{3} \in \C(P'_n)$ is not an edge direction of $P'_n$, and $S_m$ clearly has a non-degenerate vertex. \qed 
\end{proof}

\begin{figure}[hbt]%
  \centering
  \begin{tikzpicture}[x=.4cm,y=.4cm] 
    \coordinate (e1) at (2.5,2);
    \coordinate (e2) at (2.5,4.5);
    \coordinate (e3) at (-2.5,2);
    \coordinate (e4) at (-2.5,4.5);
    
    
    \draw[hidden edge] (e1) -- (e3);
    
    \draw[edge] (0,0) -- (e1);
    \draw[edge] (0,0) -- (e2);
    \draw[edge] (0,0) -- (e3);
    \draw[edge] (0,0) -- (e4);
    \draw[edge] (e1) -- ($(e1)+(e2)$);
    \draw[edge] (e2) -- ($(e1)+(e2)$);
    \draw[edge] (e3) -- ($(e3)+(e4)$);
    \draw[edge] (e4) -- ($(e3)+(e4)$);
    \draw[edge] (e2) -- (e4);
    \draw[edge] ($(e1)+(e2)$) -- ($(e3)+(e4)$);
    
    \node[right] at (e1) {$(1,0,0)$}; 
    \node[above left] at (e2) {$(1,0,1)$\!\!\!\!\!}; 
    \node[left] at (e3) {$(0,1,0)$}; 
    \node[above right] at (e4) {\!\!\!\!\!$(0,1,1)$}; 
    \node[below] at (0,0) {$\zero$};
    \node[right] at ($(e1)+(e2)$) {$(2,0,1)$}; 
    \node[left] at ($(e3)+(e4)$) {$(0,2,1)$}; 
    
    \foreach \p in {(e1), (e2), (e3), (e4), (0,0), ($(e1)+(e2)$), ($(e3)+(e4)$)}
        \fill[black] \p circle (.2);
  \end{tikzpicture}
  \caption{The polytope $P'_3$ from the proof of \cref{lem:counterex-pi-simplex-modified}.}
  \label{fig:counterex-pi-simplex-modified}%
\end{figure}
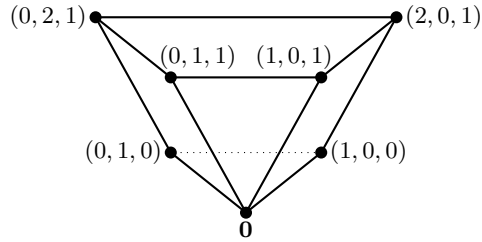

\section{Final Remarks} \label{sec:finalremarks}

We showed that the connection between the sets of circuits of polyhedra and their extensions is much weaker than the connection between their edge directions: in general, circuits are not inherited under affine projections. 
Whenever this does happen for a nontrivial combination of two polyhedra and a projection map between them, it is due to the specific combination of the three ingredients and not due to any single one of them by itself. Therefore, a natural direction of future work would be to identify properties of these combinations that are sufficient for the inheritance of circuits (beyond our characterizations in \cref{sec:characterization}). 

\bibliographystyle{plain}


\end{document}